\DeclareFontFamily{OT1}{pzc}{}
\DeclareFontShape{OT1}{pzc}{m}{it}{<-> s * [1.10] pzcmi7t}{}
\DeclareMathAlphabet{\mathpzc}{OT1}{pzc}{m}{it}
\begin{document}

\def\b0{{\bf 0}}
\def\bx{{\bf x}}
\def\by{{\bf y}}
\def\bz{{\bf z}}
\def\ee{\varepsilon}
\def\cM{\mathcal{M}}
\def\co{\mathpzc{o}}
\def\cO{\mathcal{O}}

\newcommand{\removableFootnote}[1]{}

\newtheorem{theorem}{Theorem}[section]
\newtheorem{conjecture}[theorem]{Conjecture}
\newtheorem{corollary}[theorem]{Corollary}
\newtheorem{lemma}[theorem]{Lemma}

\theoremstyle{definition}
\newtheorem{definition}{Definition}[section]

\title{
The stability of fixed points on switching manifolds of piecewise-smooth continuous maps.
}
\author{
D.J.W.~Simpson\\\\
Institute of Fundamental Sciences\\
Massey University\\
Palmerston North\\
New Zealand
}
\maketitle

\begin{abstract}
This paper concerns piecewise-smooth maps on $\mathbb{R}^d$ that are continuous but not differentiable on switching manifolds (where the functional form of the map changes). The stability of fixed points on switching manifolds is investigated in scenarios for which one-sided derivatives are locally bounded. The lack of differentiability allows fixed points to be Milnor attractors despite being unstable. For this reason a measure-theoretic notion of stability is considered in addition to standard notions of stability. Locally the map is well approximated by a piecewise-linear map that is linearly homogeneous when the fixed point is at the origin. For the class of continuous, linearly homogeneous maps, and $\co(\bx)$ perturbations of these maps, a sufficient condition for the exponential stability of the origin is obtained. It is shown how the stability of the origin can be determined by analysing invariant probability measures of a map on $\mathbb{S}^{d-1}$. The results are illustrated for the two-dimensional border-collision normal form. The fixed point may be asymptotically stable even if both smooth components of the map are area-expanding, and unstable even if it is the $\omega$-limit set of almost all points in $\mathbb{R}^d$.
\end{abstract}

\section{Introduction}
\label{sec:intro}

Piecewise-smooth dynamical systems provide useful mathematical models for a wide variety of physical and abstract phenomena
involving a threshold, switch, or other type of abrupt event.
Classical applications include switched control systems \cite{Jo03,TaLa12,Ut92},
vibro-impacting mechanical systems \cite{AwLa03,Br99,WiDe00},
and systems with stick-slip friction \cite{Ab00,BlCz99,OeHi96}.
Piecewise-smooth maps arise as return maps of piecewise-smooth systems of differential equations and
as discrete-time mathematical models, particularly in economics \cite{PuSu06}.

As the parameters of a piecewise-smooth continuous map are varied, a border-collision bifurcation occurs
when a fixed point collides with a switching manifold
and locally the map is piecewise-linear to leading order \cite{DiBu08}.
Border-collision bifurcations can be the cause of complicated dynamics including chaos
and this has been described in diverse areas of application, see for instance \cite{An03,SzOs09,Ti02,ZhMo06b}.
The dynamics near a non-degenerate border-collision bifurcation is determined by the eigenvalues
of the two matrices in the piecewise-linear form,
yet explicit criteria for the existence of invariant sets in terms of these eigenvalues
is available only for fixed points and period-two solutions \cite{Fe78,Si14d}.
Criteria for the existence of higher period solutions and some other relatively simple features can be given
implicitly and then solved to a high degree of accuracy with a root finding method,
but in general it is usually difficult to ascertain the nature of the dynamics near a border-collision bifurcation
prior to performing detailed calculations \cite{Si16}.

A basic, important, and unresolved question (even in two dimensions) is:
how can we tell from the two sets of eigenvalues in the piecewise-linear form
whether or not there exists a local attractor?
Intuitively, knowledge of the stability of the fixed point at the bifurcation
should go a long way to answering this question.
Indeed for ODEs, the stability of the equilibrium at a Hopf bifurcation
determines the criticality of the bifurcation \cite{MaMc76},
and for the continuous-time analog of a border-collision bifurcation,
asymptotic stability of the equilibrium at the bifurcation implies
the existence of a persistent attracting set \cite{DiNo08}.

This paper is concerned with the stability of fixed points at border-collision bifurcations.
Consider a piecewise-$C^1$ continuous map on $\mathbb{R}^d$
and suppose that the origin, denoted $\b0$, is a fixed point on a switching manifold.
Locally, a curved switching manifold can be straightened via a series of near-identity coordinate changes \cite{DiBu01},
and so for simplicity it is assumed that the switching manifold is linear.
With these assumptions, in a neighbourhood of $\b0$ the map can be written as
\begin{equation}
f(\bx) =
\begin{cases}
A_L \bx + \co(\bx) \;, & B^{\sf T} \bx \le 0 \;, \\
A_R \bx + \co(\bx) \;, & B^{\sf T} \bx \ge 0 \;,
\end{cases}
\label{eq:f}
\end{equation}
where $B \in \mathbb{R}^d \setminus\! \{ \b0 \}$ and $A_L$ and $A_R$ are real-valued $d \times d$ matrices.
The symbol $\co$ represents little-o notation.
That is, $\frac{\| \co(\bx) \|}{\| \bx \|} \to 0$ as $\bx \to \b0$.
The assumption of continuity on $B^{\sf T} \bx = 0$
implies $A_L - A_R = C B^{\sf T}$, for some $C \in \mathbb{R}^d$\removableFootnote{
I originally thought to motivate only assuming differentiability by referring to
grazing-sliding bifurcations, unfortunately don't think any of the results
in this paper can be applied to grazing-sliding bifurcations because
they have $g(\bx) = \b0$ for some $\bx \ne \b0$.
}.

The dynamics of \eqref{eq:f} near $\b0$ is well-approximated by
\begin{equation}
g(\bx) =
\begin{cases}
A_L \bx \;, & B^{\sf T} \bx \le 0 \;, \\
A_R \bx \;, & B^{\sf T} \bx \ge 0 \;.
\end{cases}
\label{eq:g}
\end{equation}
It is already known that the stability of $\b0$ for \eqref{eq:g} is not elementary.
For instance $\b0$ can be unstable in cases for which all eigenvalues of $A_L$ and $A_R$ have modulus less than one
giving rise to so-called dangerous border-collision bifurcations \cite{DoBa06,GaBa05,HaAb04}.

The map \eqref{eq:g} is {\em linearly homogeneous} in the sense that
\begin{equation}
g(\alpha \bx) = \alpha g(\bx) \;, \quad {\rm for~all~} \bx \in \mathbb{R}^d {\rm ~and~all~} \alpha \ge 0 \;.
\label{eq:linearlyHomogeneous}
\end{equation}
A linearly homogeneous map is determined by 
its values on the $(d-1)$-dimensional unit sphere,
$\mathbb{S}^{d-1} = \{ \bx \in \mathbb{R}^d ~|~ \| \bx \| = 1 \}$,
and can be written using two lower-dimensional functions as follows.
Assuming $g$ is continuous and $g(\bx) = \b0$ only for $\bx = \b0$, the functions
$D : \mathbb{S}^{d-1} \to (0,\infty)$ and $G : \mathbb{S}^{d-1} \to \mathbb{S}^{d-1}$, given by
\begin{equation}
D(\bz) = \| g(\bz) \|, \qquad
G(\bz) = \frac{g(\bz)}{\| g(\bz) \|} \;,
\label{eq:DG}
\end{equation}
are well-defined and continuous.
Then
\begin{equation}
g(\bx) = D \!\left( \frac{\bx}{\| \bx \|} \right) G \!\left( \frac{\bx}{\| \bx \|} \right) \| \bx \|, \quad
{\rm for~all~} \bx \ne 0 \;.
\label{eq:g2}
\end{equation}
In this form $g$ is separated into a ``dilation'' towards or away from $\b0$ (as given by $D$)
and a rotation about $\b0$ (as given by $G$).

To investigate the stability of $\b0$ for a continuous, linearly homogeneous map $g$,
it suffices to consider the forward orbits of points $\bz \in \mathbb{S}^{d-1}$.
By \eqref{eq:g2},
\begin{equation}
g^n(\bz) = G^n(\bz) \prod_{i=0}^{n-1} D \!\left( G^i(\bz) \right),
\quad {\rm for~all~} \bz \in \mathbb{S}^{d-1} {\rm ~and~all~} n \ge 1 \;.
\label{eq:gn}
\end{equation}
Thus the attractors of $G$ can help us determine  the behaviour of $g^n(\bz)$ for large values of $n$.
By \eqref{eq:gn},
\begin{equation}
\left\| g^n(\bz) \right\|
= {\rm exp} \!\left[ \sum_{i=0}^{n-1}
\ln \!\left( D \!\left( G^i(\bz) \right) \right) \right],
\label{eq:gn2}
\end{equation}
which motivates the definition
\begin{equation}
\lambda(\mu) = \int_{\mathbb{S}^{d-1}} \ln(D) \,d\mu \;,
\label{eq:lambda}
\end{equation}
for any invariant probability measure $\mu$ of $G$.

In \cite{DoKi08} the above framework was applied to \eqref{eq:g} with $d = 2$.
The authors considered invariant probability measures $\mu$
whose basins have full measure on $\mathbb{S}^{d-1}$.
If $\lambda(\mu) < 0$ then, by \eqref{eq:gn2} and \eqref{eq:lambda},
$g^n(\bz) \to \b0$ for almost all $\bz \in \mathbb{S}^{d-1}$.
Although this is useful from an applied perspective in that
a numerical simulation of a forward orbit from a random initial point will produce $g^n(\bz) \to \b0$ with probability $1$,
it does not imply that $\b0$ is (Lyapunov) stable.

The remainder of this paper is organised as follows.
Notions of the stability of a fixed point are given in \S\ref{sec:stability}.
These are mostly standard and provided for convenience but 
a new notion of ``measure-$\rho$ stability''
(meaning roughly that the fraction of points near the fixed point
whose forward orbit stays near, and converges to, the fixed point is $\rho$) is also introduced.
Section \ref{sec:clh} concerns the class of continuous, linearly homogeneous maps \eqref{eq:linearlyHomogeneous}.
It is shown that if the forward orbits of all points in a neighbourhood of $\b0$ converge to $\b0$,
then the convergence is uniform and $\b0$ is globally asymptotically stable.
It is also shown that $\b0$ remains asymptotically stable under $\co(\bx)$ perturbations to the map.
This helps justify the use of \eqref{eq:g} for studying the stability of $\b0$ for the map \eqref{eq:f}.

In \S\ref{sec:dilation} the stability of $\b0$
for a continuous, linearly homogeneous map $g$ is related to invariant probability measures of $G$.
In \S\ref{sec:fps}, for the piecewise-linear map \eqref{eq:g},
positive eigenvalues of $A_L$ and $A_R$ are related to fixed points of $G$.

The results are then applied to the two-dimensional border-collision normal form \cite{NuYo92},
focusing on the non-invertible case.
Section \ref{sec:theory} provides a comprehensive description of the dynamics of $G$ (in this case a circle map)
and two geometric conditions necessary and sufficient for the asymptotic stability of $\b0$ are given.
In \S\ref{sec:numerics} these conditions are used to numerically identify
an open region of parameter space for which $\b0$ is asymptotically stable
despite each piece of \eqref{eq:g} being area-expanding.
For some parameter values near this stability region
$\b0$ is unstable but $g^n(\bx) \to \b0$ for almost all $\bx \in \mathbb{R}^2$.

Finally \S\ref{sec:conc} provides concluding remarks.
Some proofs are deferred to Appendix \ref{app:proofs}.

Regarding notation, throughout this paper $B_r$ and $\overline{B}_r$ denote
the open and closed balls of radius $r > 0$ centred at $\b0$:
\begin{equation}
B_r = \left\{ \bx \in \mathbb{R}^d ~\middle|~ \| \bx \| < r \right\}, \qquad
\overline{B}_r = \left\{ \bx \in \mathbb{R}^d ~\middle|~ \| \bx \| \le r \right\}.
\label{eq:Br}
\end{equation}
Also ${\rm meas}(X)$ denotes the Lebesgue measure of a set $X \subset \mathbb{R}^d$.

\section{Notions of stability for fixed points}
\label{sec:stability}

Let $F : \mathbb{R}^d \to \mathbb{R}^d$ be continuous and suppose $F(\b0) = \b0$.

\begin{definition}
The fixed point $\b0$ is said to be
\begin{itemize}
\item 
{\em Lyapunov stable} if for all $\ee > 0$ there exists $\delta > 0$ such that
$F^n(\bx) \in B_\ee$ for all $\bx \in B_\delta$ and all $n \ge 0$;
\item 
{\em asymptotically stable} if it is Lyapunov stable and
there exists $\delta > 0$ such that $F^n(\bx) \to \b0$ as $n \to \infty$ for all $\bx \in B_\delta$;
\item
{\em exponentially stable} if it is Lyapunov stable and
there exists $\delta > 0$, $a > 0$ and $0 < b < 1$ such that
$\| F^n(\bx) \| \le a b^n \| \bx \|$ for all $\bx \in B_\delta$ and all $n \ge 0$;
\item
{\em globally asymptotically stable} if it is Lyapunov stable and 
$F^n(\bx) \to \b0$ as $n \to \infty$ for all $\bx \in \mathbb{R}^d$.
\end{itemize}
\label{df:stability}
\end{definition}

The above definitions are given in, for instance, \cite{HaCh08,LaTr02}.
Refer to \cite{LiAn09} for a review of the stability of discrete-time switched systems
where the switching occurs according to a control law
more sophisticated than the sign of $B^{\sf T} \bx$, as in \eqref{eq:f}.

Since our interest is with maps that are non-differentiable,
it is helpful to also consider a weaker measure-theoretic notion of stability.
An invariant set is said to be a Milnor attractor if its basin of attraction
has positive measure \cite{Mi85}.
A Milnor attractor need not be Lyapunov stable,
say if its basin of attraction is {\em riddled}
(that is, every neighbourhood has a positive measure subset that does not belong to the basin) \cite{AlYo92}.
For two-dimensional piecewise-linear continuous maps with two switching manifolds,
Milnor attractors were studied in \cite{Gl01,KaMa99}.

\begin{definition}
For any $\ee,\delta > 0$,
let $A(\ee,\delta)$ be the set of all $\bx \in B_\delta$
for which $F^n(\bx) \to \b0$ as $n \to \infty$ with $F^n(\bx) \in B_\ee$ for all $n \ge 0$.
If the limit
\begin{equation}
\rho = \lim_{\ee \to 0} \lim_{\delta \to 0} \frac{{\rm meas}(A(\ee,\delta))}{{\rm meas}(B_\delta)}
\label{eq:rho}
\end{equation}
exists, then we say that $\b0$ is a {\em measure-$\rho$ stable} fixed point of $F$.
\end{definition}

Essentially $\rho$ is the fraction of points near $\b0$
whose forward orbits stay near $\b0$ and converge to $\b0$.
If $\b0$ is asymptotically stable, then it is measure-$1$ stable.
If $F$ is differentiable and $\b0$ is not Lyapunov stable, then $\b0$ is measure-$0$ stable.
Measure-$1$ stability is analogous to ``almost sure'' stability for Markov processes \cite{BoCo04}.

\section{Continuous linearly homogeneous maps}
\label{sec:clh}

Here we consider a continuous, linearly homogeneous map $g : \mathbb{R}^d \to \mathbb{R}^d$, where $d \ge 1$.
By substituting $\alpha = 0$ into \eqref{eq:linearlyHomogeneous} we see that $\b0$ must be a fixed point of $g$.
We first show that for $\b0$ to be globally asymptotically stable,
we only need that $g^n(\bx) \to \b0$ as $n \to \infty$ for all $\bx$ in a neighbourhood of $\b0$.
We also show that this convergence is uniform\removableFootnote{
I choose not to prove exponential stability here
because it is an immediate consequence of Theorem \ref{th:hots} below and
I would have to repeat it for $f$ in the proof of Theorem \ref{th:hots} anyway.

I choose to prove uniform convergence here
because I need it in the set-up of the proof of Theorem \ref{th:hots}.
Note that we actually prove that $g^n(\overline{B}_r) \to \b0$ uniformly,
and so $B_r$, or in fact any subset of $\overline{B}_r$, converges to $\b0$ uniformly.
}.

\begin{lemma}
Let $g : \mathbb{R}^d \to \mathbb{R}^d$ be continuous and linearly homogeneous.
Suppose there exists $r > 0$ such that $g^n(\bx) \to \b0$ as $n \to \infty$ for all $\bx \in B_r$.
Then
\begin{enumerate}
\item
$\b0$ is a globally asymptotically stable fixed point of $g$, and
\item
$g^n(B_r) \to \b0$ uniformly.
\end{enumerate}
\label{le:clh}
\end{lemma}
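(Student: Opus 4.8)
The plan is to exploit linear homogeneity to reduce everything to the compact set $\mathbb{S}^{d-1}$, where pointwise convergence can be upgraded to uniform convergence by a compactness/continuity argument, and then to feed this uniformity back into a global Lyapunov-stability argument. First I would observe that by homogeneity it suffices to understand the orbits of points on the unit sphere: if $\bz \in \mathbb{S}^{d-1}$ then $g^n(\bz) = \left( r/2 \right)^{-1} g^n\!\left( (r/2)\bz \right)$ for each $n$, and since $(r/2)\bz \in B_r$ the hypothesis gives $g^n(\bz) \to \b0$; conversely any $\bx \ne \b0$ is a positive scalar multiple of a point of $\mathbb{S}^{d-1}$, so $g^n(\bx) \to \b0$ for \emph{all} $\bx \in \mathbb{R}^d$ (the point $\b0$ being fixed). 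This already gives the convergence half of global asymptotic stability; what remains is Lyapunov stability and the uniformity claim.

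Next I would establish the uniform convergence (part ii, which I will prove for $g^n(\overline{B}_1) \to \b0$; the claim for $B_r$, or indeed any subset of a closed ball, then follows by rescaling). The idea is: since $g$ is continuous and $\mathbb{S}^{d-1}$ is compact, so is $g(\mathbb{S}^{d-1})$, and more generally each iterate $g^n$ is continuous. Fix $\ee > 0$. For each $\bz \in \mathbb{S}^{d-1}$ pointwise convergence gives an $N_\bz$ with $\| g^{N_\bz}(\bz) \| < \ee/2$; by continuity of $g^{N_\bz}$ there is an open neighbourhood $U_\bz$ of $\bz$ in $\mathbb{S}^{d-1}$ on which $\| g^{N_\bz} \| < \ee$. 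These $U_\bz$ cover the compact sphere, so finitely many suffice, giving a single $N = \max N_{\bz_j}$ with $\| g^{N}(\bz) \| < \ee$ for all $\bz \in \mathbb{S}^{d-1}$. However $N$ controls only iterate $N$, not all later iterates — this is exactly the place where the argument needs care, and it is the main obstacle. To close the gap I would use the separation of $g$ into dilation and rotation via \eqref{eq:g2}: set $\kappa = \sup_{\bz \in \mathbb{S}^{d-1}} D(\bz) = \sup_{\bz \in \mathbb{S}^{d-1}} \| g(\bz) \|$, which is finite by continuity and compactness. Then $\| g(\bx) \| \le \kappa \| \bx \|$ for all $\bx$, so once $\| g^{N}(\bz) \| < \ee / \max\{1,\kappa\}^{M}$ we cannot immediately bound iterates $N{+}1,\dots,N{+}M$ uniformly unless $\kappa \le 1$ — so instead I would first apply the finite-cover argument to obtain an $N$ with $\sup_{\mathbb{S}^{d-1}} \| g^{N} \| < 1$, i.e. $g^{N}(\overline{B}_1) \subseteq B_1$, and then note that by homogeneity $g^{N}(\overline{B}_1) \subseteq B_1$ implies $g^{kN}(\overline{B}_1) \subseteq B_{c^k}$ where $c = \sup_{\mathbb{S}^{d-1}} \| g^{N} \| < 1$; hence $g^{kN}(\overline{B}_1) \to \b0$ uniformly, and the intermediate iterates are controlled by the single bound $\| g^{j}(\bx) \| \le \max\{1,\kappa\}^{N} \| \bx \|$ for $0 \le j < N$. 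This gives uniform convergence $g^n(\overline{B}_1) \to \b0$, hence $g^n(B_r) \to \b0$ uniformly after scaling.

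Finally, Lyapunov stability of $\b0$ (needed to conclude part i) drops out of the estimates just obtained. From $g^{N}(\overline{B}_1) \subseteq \overline{B}_c$ with $c < 1$ together with $\| g(\bx) \| \le \kappa \| \bx \|$ one gets $\| g^{n}(\bx) \| \le a \| \bx \|$ for all $n \ge 0$ and all $\bx$, with $a = \max\{1,\kappa\}^{N}$ (because any $n$ can be written $n = kN + j$ with $0 \le j < N$, and $\| g^{kN+j}(\bx) \| \le \max\{1,\kappa\}^{j} c^{k} \| \bx \| \le a \| \bx \|$, using homogeneity to pass the factor $c^k$ through). Thus, given $\ee > 0$, choosing $\delta = \ee / a$ yields $g^n(B_\delta) \subseteq B_\ee$ for all $n \ge 0$, which is Lyapunov stability. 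Combining this with the convergence established in the first paragraph gives that $\b0$ is globally asymptotically stable, proving part i. The one genuinely delicate point throughout is keeping track of the "gap" iterates between multiples of $N$; everything else is a routine compactness argument, and I would present the dilation bound $\kappa$ early so that both the uniformity and the Lyapunov estimate can invoke it.
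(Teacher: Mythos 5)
There is a genuine gap, and it sits exactly at the point you yourself flag as the main obstacle. Your plan hinges on the claim that the finite-cover argument produces a single $N$ with $\sup_{\bz \in \mathbb{S}^{d-1}} \| g^{N}(\bz) \| < 1$. But the cover argument only gives you finitely many patches $U_{\bz_1},\dots,U_{\bz_p}$ and \emph{different} times $N_1,\dots,N_p$ such that $\| g^{N_j} \| < \eta$ on $U_{\bz_j}$. To pass from time $N_j$ to the common time $N = \max_j N_j$ on the patch $U_{\bz_j}$ you must bridge $N-N_j$ further iterates, and the only bound available is $\| g^{N}(\bz) \| \le \kappa^{\,N-N_j}\,\eta$, which is useless when $\kappa > 1$. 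You cannot repair this by choosing $\eta$ small in advance, because $N$ and the $N_j$ are produced by the cover, which itself depends on $\eta$: the argument is circular. (Trying to make the patches control \emph{all} iterates $n \ge N_\bz$ instead does not help either, since the set $\{\bz : \|g^n(\bz)\| < \eta \ \forall n \ge N_\bz\}$ is an infinite intersection of open conditions and need not be open, so compactness gives no finite subcover.) Since your Lyapunov estimate $\| g^n(\bx) \| \le a \| \bx \|$ and the geometric decay along multiples of $N$ are both derived from this unproved contraction $g^{N}(\overline{B}_1) \subseteq B_c$, the whole chain collapses: pointwise convergence to $\b0$ of continuous maps on a compact set does not by itself yield uniform convergence, and the dilation bound $\kappa$ alone cannot substitute for the missing control.

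What is missing is precisely Lyapunov stability, and it has to be established \emph{before} the uniformity, not after. The paper does this by contradiction: if $\b0$ were not Lyapunov stable, homogeneity gives unit vectors $\bx_k$ and times $n_k$ with $\| g^{n_k}(\bx_k) \| \ge k$, and after renormalising so that $\| g^{n}(\bx_k) \| \ge 1$ for all $n \le n_k$ (replacing $\bx_k$ by a rescaled later iterate if necessary, which forces $n_k \to \infty$), compactness of $\mathbb{S}^{d-1}$ yields a limit point $\by$ with $\| g^{N}(\by) \| \ge \tfrac12$ for every $N$, contradicting $g^n(\by) \to \b0$. Once Lyapunov stability is in hand (for every $\ee$ there is $\delta_1$ such that orbits entering $B_{\delta_1}$ remain in $B_\ee$), your finite-cover idea does work: choose $N_\bz$ with $g^{N_\bz}(\bz) \in B_{\delta_1/2}$, use continuity of the single map $g^{N_\bz}$ to get a neighbourhood mapped into $B_{\delta_1}$, and then \emph{all} later iterates stay in $B_\ee$, so taking a finite subcover and $N = \max_j N_j$ gives uniform convergence (the paper packages this as uniform boundedness plus equicontinuity and invokes Arzel\`a--Ascoli). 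With that uniform statement available, the remainder of your scheme --- the bridge bound $\max\{1,\kappa\}^{N}$ for the gap iterates, geometric decay along multiples of $N$, and $\delta = \ee/a$ for stability --- is sound, and is essentially how the paper later proves exponential stability in Theorem \ref{th:hots}.
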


A proof in given in Appendix \ref{app:proofs}.
Part (i) is proved by supposing that $\b0$ is not Lyapunov stable
and obtaining a contradiction by constructing a point $\by \in \mathbb{S}^{d-1}$
for which $g^n(\by) \not\to \b0$ as $n \to \infty$.
Part (ii) is proved by showing that on the compact set $\overline{B}_r$
the sequence of functions $\{ g^n \}$ is uniformly bounded and equicontinuous,
from which the result follows by the Arzel\`{a}-Ascoli theorem\removableFootnote{
We use the continuity of $f$ to establish
$\left\| g^n(\bx) - g^n(\by) \right\| < \ee$ for finitely many values of $n$,
and the asymptotic stability of $\b0$ to establish this inequality for arbitrarily large values of $n$.
}.

Next we provide a technical result used below to prove Theorem \ref{th:hots}.

\begin{lemma}
Let $g : \mathbb{R}^d \to \mathbb{R}^d$ be continuous and linearly homogeneous.
Let $f : \mathbb{R}^d \to \mathbb{R}^d$ be continuous with $f(\bx) - g(\bx) = \co(\bx)$.
Then
\begin{equation}
f^n(\bx) - g^n(\bx) = \co(\bx),
\label{eq:figiBound}
\end{equation}
for all $n \ge 1$.
\label{le:figiBound}
\end{lemma}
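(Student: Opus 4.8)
The plan is to induct on $n$. The base case $n=1$ is the hypothesis $f(\bx)-g(\bx)=\co(\bx)$. For the inductive step I would assume $f^n(\bx)-g^n(\bx)=\co(\bx)$ and decompose
\[
f^{n+1}(\bx)-g^{n+1}(\bx)=\bigl[f\bigl(f^n(\bx)\bigr)-g\bigl(f^n(\bx)\bigr)\bigr]+\bigl[g\bigl(f^n(\bx)\bigr)-g\bigl(g^n(\bx)\bigr)\bigr],
\]
and show each bracketed term is $\co(\bx)$.

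Before handling the two terms I would record two elementary facts. With $M:=\max_{\bz\in\mathbb{S}^{d-1}}\|g(\bz)\|$ (finite by continuity and compactness), linear homogeneity gives $\|g(\bx)\|\le M\|\bx\|$, hence $\|g^n(\bx)\|\le M^n\|\bx\|$; together with the inductive hypothesis this yields $\|f^n(\bx)\|\le(M^n+1)\|\bx\|=:C_n\|\bx\|$ once $\|\bx\|$ is small. Second, I would upgrade the ordinary uniform continuity of $g$ on $\overline{B}_1$, with some nondecreasing modulus $\omega$, to a \emph{scale-invariant} estimate: rescaling $\bu,\bv$ by $R:=\max(\|\bu\|,\|\bv\|)$ and using homogeneity,
\[
\|g(\bu)-g(\bv)\|\le R\,\omega\!\left(\tfrac{\|\bu-\bv\|}{R}\right),\qquad\text{while always}\qquad\|g(\bu)-g(\bv)\|\le 2MR .
\]

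The first bracketed term is $(f-g)\bigl(f^n(\bx)\bigr)=\co\bigl(f^n(\bx)\bigr)$, and since $\|f^n(\bx)\|\le C_n\|\bx\|$ this is $\co(\bx)$: given $\ee>0$, for $\|\bx\|$ small $\|f^n(\bx)\|$ is small enough that $\|(f-g)(f^n(\bx))\|\le\frac{\ee}{C_n}\|f^n(\bx)\|\le\ee\|\bx\|$. For the second term I would put $\by=f^n(\bx)$, $\bz=g^n(\bx)$, $R=\max(\|\by\|,\|\bz\|)\le C_n\|\bx\|$, fix $\ee>0$, choose $\eta>0$ with $\omega(\eta)\le\ee$, and split on the size of $\|\by-\bz\|$: if $\|\by-\bz\|\le\eta R$ then $\|g(\by)-g(\bz)\|\le R\,\omega(\eta)\le\ee C_n\|\bx\|$; if instead $\|\by-\bz\|>\eta R$ then $R<\eta^{-1}\|\by-\bz\|$, so $\|g(\by)-g(\bz)\|\le 2MR\le 2M\eta^{-1}\|\by-\bz\|$, which is $\co(\bx)$ by the inductive hypothesis (the degenerate case $R=0$ being trivial). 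Either way $\|g(\by)-g(\bz)\|\le\ee'\|\bx\|$ for $\|\bx\|$ small with $\ee'$ arbitrarily small, so the second term is $\co(\bx)$, closing the induction.

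I expect the second term to be the only real difficulty. Because $g$ is merely continuous — not Lipschitz — and because the iterates $g^n(\bx),f^n(\bx)$, though $\cO(\|\bx\|)$, may be of strictly smaller order than $\|\bx\|$ or even vanish, one cannot simply feed the estimate $\by-\bz=\co(\bx)$ into a fixed modulus of continuity of $g$. The scale-invariant modulus $R\,\omega(\cdot/R)$ together with the above dichotomy is the device that circumvents this: in the regime $\|\by-\bz\|>\eta R$ the crude bound $2MR$ suffices precisely because there $R$ is itself forced to be of the same (small) order as $\|\by-\bz\|$.
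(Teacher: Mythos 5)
Your argument is correct, and its core ingredients are the same as the paper's: the identical decomposition $f^{n+1}(\bx)-g^{n+1}(\bx)=\bigl[g\bigl(f^n(\bx)\bigr)-g\bigl(g^n(\bx)\bigr)\bigr]+(f-g)\bigl(f^n(\bx)\bigr)$, the bound $\|g^n(\bx)\|\le K^n\|\bx\|$ giving $\|f^n(\bx)\|\le C_n\|\bx\|$, and uniform continuity of $g$ on $\overline{B}_1$ upgraded by linear homogeneity to a scale-invariant estimate. Where you differ is in the organisation and in how the non-Lipschitz continuity is fed the little-o quantity. The paper fixes $n$ and $\ee$ at the outset, builds a backward chain of tolerances $\eta_n\le\cdots\le\eta_1$ with $\eta_1=\ee/(2K^n)$ so that a perturbation below $\alpha\eta_{j+1}$ is mapped by $g$ to one below $\alpha\eta_j/2$ on $\overline{B}_\alpha$, and then runs a quantitative induction on the step index $i\le n$, always applying the scaled continuity at the single scale $\alpha=2K^n\|\bx\|$. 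You instead induct on $n$ with the qualitative hypothesis $f^n(\bx)-g^n(\bx)=\co(\bx)$ and work at the scale $R=\max\bigl(\|f^n(\bx)\|,\|g^n(\bx)\|\bigr)$, which forces your dichotomy on $\|\by-\bz\|$ versus $\eta R$. That dichotomy is valid, but it is avoidable: since uniform continuity on $\overline{B}_1$ already covers points near the origin, you could apply your scale-invariant modulus at the scale $C_n\|\bx\|$ (as the paper in effect does), where $\|\by-\bz\|/(C_n\|\bx\|)$ is small by the inductive hypothesis, and conclude in one step. So your route buys a cleaner statement of the induction (directly on $n$, no bookkeeping of a tolerance chain), at the cost of an extra case split that the paper's choice of scale renders unnecessary; conversely the paper's version yields explicit intermediate bounds $\|f^i(\bx)-g^i(\bx)\|\le 2K^n\eta_{n-i+1}\|\bx\|$ that it reuses verbatim in the proof of Theorem \ref{th:hots}.
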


Lemma \ref{le:figiBound} is proved in Appendix \ref{app:proofs}.
The result requires the linear homogeneity of $g$.
For instance $g(x) = \sqrt{x}$ is not linearly homogeneous
and with $f(x) = \sqrt{x} + x^{\frac{3}{2}}$ we have
$\left\| f(f(x)) - g(g(x)) \right\| = x^{\frac{3}{4}} + \cO \big( x^{\frac{5}{4}} \big)$.
Thus $f(x) - g(x) = \co(x)$ but \eqref{eq:figiBound} does not hold with $n=2$.

\begin{theorem}
Let $g : \mathbb{R}^d \to \mathbb{R}^d$ be continuous and linearly homogeneous.
Let $f : \mathbb{R}^d \to \mathbb{R}^d$ be continuous with $f(\bx) - g(\bx) = \co(\bx)$.
Then there exists $r > 0$ such that $g^n(\bx) \to \b0$ as $n \to \infty$ for all $\bx \in B_r$
if and only if $\b0$ is an exponentially stable fixed point of $f$.
\label{th:hots}
\end{theorem}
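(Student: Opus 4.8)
The plan is to prove both directions of the equivalence, using Lemmas \ref{le:clh} and \ref{le:figiBound} as the main tools, together with a compactness argument on the sphere $\mathbb{S}^{d-1}$.

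\medskip

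\emph{($\Leftarrow$) Exponential stability of $\b0$ for $f$ implies convergence for $g$.} Suppose $\b0$ is an exponentially stable fixed point of $f$, so there are $\delta, a > 0$ and $b \in (0,1)$ with $\|f^n(\bx)\| \le a b^n \|\bx\|$ for all $\bx \in B_\delta$ and $n \ge 0$. Fix $\bz \in \mathbb{S}^{d-1}$ and, for small $t > 0$, consider $\bx = t\bz$. By Lemma \ref{le:figiBound} (applied with the particular iterate count we need), $f^n(\bx) - g^n(\bx) = \co(\bx)$; combined with linear homogeneity $g^n(t\bz) = t\, g^n(\bz)$, this gives $\|g^n(t\bz)\| \le \|f^n(t\bz)\| + \|\co(t\bz)\| \le a b^n t + \co(t)$, so dividing by $t$ and letting $t \to 0$ yields $\|g^n(\bz)\| \le a b^n$. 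Since this holds for every $\bz \in \mathbb{S}^{d-1}$ and every $n$, and since $g$ is linearly homogeneous, $\|g^n(\bx)\| \le a b^n \|\bx\|$ for all $\bx$, in particular $g^n(\bx) \to \b0$ for all $\bx \in B_r$ with (say) $r = 1$. The one subtlety is that the $\co$ bound in Lemma \ref{le:figiBound} is uniform over directions only because the lemma is a statement about $f^n - g^n$ as a whole; I would make sure to quote it so that the decay rate of $\|\co(t\bz)\|/t$ is uniform in $\bz$, which follows from continuity on the compact sphere.

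\medskip

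\emph{($\Rightarrow$) Convergence for $g$ implies exponential stability of $\b0$ for $f$.} Assume $g^n(\bx) \to \b0$ for all $\bx \in B_r$. By Lemma \ref{le:clh}, $\b0$ is globally asymptotically stable for $g$ and $g^n \to \b0$ uniformly on $\overline{B}_1$. Hence there is a fixed $N$ with $\|g^N(\bz)\| \le \tfrac12$ for all $\bz \in \mathbb{S}^{d-1}$, and by linear homogeneity $\|g^N(\bx)\| \le \tfrac12 \|\bx\|$ for all $\bx$. Now I want to transfer this contraction to $f^N$ on a small ball. By Lemma \ref{le:figiBound}, $f^N(\bx) - g^N(\bx) = \co(\bx)$, so there is $\delta_0 > 0$ with $\|f^N(\bx) - g^N(\bx)\| \le \tfrac14 \|\bx\|$ for $\|\bx\| \le \delta_0$; then $\|f^N(\bx)\| \le \tfrac34 \|\bx\|$ on $\overline{B}_{\delta_0}$. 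I must also control the intermediate iterates $f, f^2, \dots, f^{N-1}$: since each is continuous and fixes $\b0$, on a possibly smaller ball $\overline{B}_{\delta_1} \subseteq \overline{B}_{\delta_0}$ each of $f, \dots, f^{N-1}$ maps $\overline{B}_{\delta_1}$ into $\overline{B}_{\delta_0}$ and satisfies a bound $\|f^j(\bx)\| \le M\|\bx\|$ for some constant $M$ (using that $f^j(\bx) = g^j(\bx) + \co(\bx)$ and $g^j$ is bounded by its sup over the sphere times $\|\bx\|$). The contraction $\|f^N(\bx)\| \le \tfrac34\|\bx\|$ on $\overline{B}_{\delta_1}$ then gives, by iterating and interpolating the intermediate steps, $\|f^n(\bx)\| \le a b^n \|\bx\|$ with $b = (\tfrac34)^{1/N}$ and $a = M (\tfrac34)^{-1}$ (or a similar explicit constant), which is exactly exponential stability; Lyapunov stability follows from the same bound.

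\medskip

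The main obstacle I anticipate is the bookkeeping in the ($\Rightarrow$) direction: ensuring that the smallness of the ball on which the $\co$-perturbation is controlled is compatible with the requirement that $f^N$, and not just $g^N$, is a genuine contraction there, \emph{and} that the intermediate iterates $f^j$ for $1 \le j \le N-1$ do not leave the region where all these estimates are valid. This is a standard but slightly delicate "eventual contraction implies exponential stability" argument, complicated here by the fact that $f$ is only assumed continuous (not Lipschitz), so the linear-in-$\|\bx\|$ bounds on the finitely many maps $f, \dots, f^{N-1}$ must be extracted from $f^j - g^j = \co$ plus the linear homogeneity of $g$ rather than from a Lipschitz constant. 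Once the radii $\delta_0 > \delta_1 > 0$ and the constant $N$ are chosen in the right order, the rest is routine.
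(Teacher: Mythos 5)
Your proof is correct, and for the main direction (convergence for $g$ implies exponential stability of $f$) it follows essentially the same route as the paper: extract $N$ from the uniform convergence in Lemma \ref{le:clh}(ii), transfer the contraction of $g^N$ to $f^N$ via Lemma \ref{le:figiBound}, bound the finitely many intermediate iterates $f^j$ linearly on a suitably shrunk ball, and iterate the period-$N$ contraction; your ordering of the choices ($N$ first, then the radii) avoids any circularity, just as in the paper. The only genuine difference is in the converse. The paper picks a single $N$ with $a b^N < \frac{1}{4}$, applies Lemma \ref{le:figiBound} once to get $\left\| g^N(\bx) \right\| \le \frac{1}{2} \| \bx \|$ on a small ball, and concludes convergence from that one contraction; you instead apply the lemma for every $n$ and let $t \to 0$ along each ray $t\bz$, which by exact homogeneity of $g^n$ gives the pointwise bound $\left\| g^n(\bz) \right\| \le a b^n$ and hence the global estimate $\left\| g^n(\bx) \right\| \le a b^n \| \bx \|$. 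Your scaling argument yields slightly more (that $g$ inherits the exponential bound with the same constants $a,b$), at the cost of invoking the lemma for all $n$; the paper's version is leaner and gives only what is needed. Incidentally, your worry about uniformity of the $\co$ term over directions is unnecessary: for each fixed $n$ and fixed $\bz$ the quantity $\left\| g^n(t\bz) \right\| / t = \left\| g^n(\bz) \right\|$ is independent of $t$, so the pointwise limit $t \to 0$ already delivers the bound.
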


\begin{proof} 
First suppose there exists $r > 0$ such that $g^n(\bx) \to \b0$ as $n \to \infty$ for all $\bx \in B_r$.
Choose any $\ee > 0$.
By part (i) of Lemma \ref{le:clh},
$\b0$ is an asymptotically stable fixed point of $g$,
hence there exists $\delta_1 > 0$ such that
\begin{equation}
g^n(\bx) \in B_\ee \;, \quad {\rm for~all~} \bx \in B_{\delta_1} {\rm ~and~all~} n \ge 0 \;,
\label{eq:gLyapunovStable}
\end{equation}
and for all $\bx \in B_{\delta_1}$ we have $g^n(\bx) \to \b0$ as $n \to \infty$.
By part (ii) of Lemma \ref{le:clh},
$g^n \!\left( B_{\delta_1} \right) \to \b0$ uniformly,
hence there exists $N \in \mathbb{Z}$ such that
\begin{equation}
g^n(\bx) \in B_{\frac{\delta_1}{4}} \;, \quad
{\rm for~all~} x \in B_{\delta_1} {\rm ~and~all~} n \ge N \;.
\label{eq:gUniformConvergence}
\end{equation}
Let $R = \frac{\ee}{\delta_1} > 1$.
Since $g$ is linearly homogeneous, we can rewrite \eqref{eq:gLyapunovStable} as
\begin{equation}
\left\| g^n(\bx) \right\| \le R \| \bx \| \;, \quad {\rm for~all~} \bx \in \mathbb{R}^d {\rm ~and~all~} n \ge 0 \;,
\label{eq:gLyapunovStable2}
\end{equation}
and rewrite \eqref{eq:gUniformConvergence} as
\begin{equation}
\left\| g^n(\bx) \right\| \le \frac{1}{4} \| \bx \| \;, \quad
{\rm for~all~} \bx \in \mathbb{R}^d {\rm ~and~all~} n \ge N \;.
\label{eq:gUniformConvergence2}
\end{equation}
By Lemma \ref{le:figiBound}, there exists $\delta > 0$ such that
\begin{equation}
\left\| f^n(\bx) - g^n(\bx) \right\| \le \frac{1}{4} \| \bx \| \;, \quad
{\rm for~all~} \bx \in B_\delta {\rm ~and~all~} n = 1,2,\ldots,N \;,
\label{eq:figiBound2}
\end{equation}
and we assume $\delta < \frac{\delta_1}{2}$.
By \eqref{eq:gLyapunovStable2} and \eqref{eq:figiBound2}, we have
\begin{align}
\left\| f^n(\bx) \right\| \le
\left\| f^n(\bx) - g^n(\bx) \right\| + \left\| g^n(\bx) \right\| \le
\frac{1}{4} \| \bx \| + R \| \bx \| \le 2 R \| \bx \|, & \nonumber \\
{\rm for~all~} \bx \in B_\delta {\rm ~and~all~} n = 0,1,\ldots,N \;. &
\label{eq:fiBound}
\end{align}
Also by \eqref{eq:gUniformConvergence2} and \eqref{eq:figiBound2} with $n = N$, we have
\begin{align}
\left\| f^N(\bx) \right\| \le
\left\| f^N(\bx) - g^N(\bx) \right\| + \left\| g^N(\bx) \right\| \le
\frac{1}{4} \| \bx \| + \frac{1}{4} \| \bx \| = \frac{1}{2} \| \bx \|, \quad
{\rm for~all~} \bx \in B_\delta \;.
\label{eq:fNBound}
\end{align}
By applying \eqref{eq:fiBound} and \eqref{eq:fNBound} recursively,
we first see that for all $\bx \in B_\delta$ and all $n \ge 0$, we have
$\left\| f^n(\bx) \right\| \le 2 R \delta < \ee$,
which verifies Lyapunov stability.
Second, for all $\bx \in B_\delta$, $j \ge 0$ and $n \ge j N$ we have
$\left\| f^n(\bx) \right\| \le \left( \frac{1}{2} \right)^j 2 R \| \bx \|$.
Thus,
\begin{equation}
\left\| f^n(\bx) \right\| \le 2 R \left( 2^{\frac{-1}{N}} \right)^n \| \bx \|, \quad
{\rm for~all~} \bx \in B_\delta {\rm ~and~all~} n \ge 0 \;,
\nonumber
\end{equation}
which verifies exponential stability.

Conversely suppose $\b0$ is an exponentially stable fixed point of $f$.
Then there exists $\delta > 0$, $a > 0$ and $0 < b < 1$ such that
$\left\| f^n(\bx) \right\| \le a b^n \| \bx \|$ for all $\bx \in B_\delta$ and all $n \ge 0$.
Let $N \in \mathbb{Z}$ be such that $a b^N < \frac{1}{4}$.
Then
\begin{equation}
\left\| f^N(\bx) \right\| \le \frac{1}{4} \| \bx \|, \quad
{\rm for~all~} \bx \in B_\delta \;.
\label{eq:expStableConverseProof1}
\end{equation}
By Lemma \ref{le:figiBound} there exists $r > 0$ such that
\begin{equation}
\left\| f^N(\bx) - g^N(\bx) \right\| \le \frac{1}{4} \| \bx \|, \quad
{\rm for~all~} \bx \in B_r \;,
\label{eq:expStableConverseProof2}
\end{equation}
and assume $r \le \delta$.
Then, by \eqref{eq:expStableConverseProof1} and \eqref{eq:expStableConverseProof2},
\begin{align*}
\left\| g^N(\bx) \right\| \le
\left\| f^N(\bx) - g^N(\bx) \right\| + \left\| f^N(\bx) \right\| \le
\frac{1}{4} \| \bx \| + \frac{1}{4} \| \bx \| = \frac{1}{2} \| \bx \|, \quad
{\rm for~all~} \bx \in B_r \;.
\end{align*}
Thus $g^n(\bx) \to \b0$ as $n \to \infty$ for all $\bx \in B_r$ as required.
\end{proof}

A simple consequence of Theorem \ref{th:hots} is that if $\b0$ is asymptotically stable for $g$
then $\b0$ is also asymptotically stable for $f$.
The converse of this statement is not true.
For example, $0$ is an asymptotically stable fixed point of $f(x) = x - x^3$,
but $0$ is not an asymptotically stable fixed point of $g(x) = x$.
The converse may be true if asymptotic stability is replaced by Lyapunov stability:

\begin{conjecture}
Let $g : \mathbb{R}^d \to \mathbb{R}^d$ be continuous and linearly homogeneous.
Let $f : \mathbb{R}^d \to \mathbb{R}^d$ be continuous with $f(\bx) - g(\bx) = \co(\bx)$.
Suppose $\b0$ is an unstable (i.e.~not Lyapunov stable) fixed point of $g$.
Then $\b0$ is an unstable fixed point of $f$.
\label{cj:hots2}
\end{conjecture}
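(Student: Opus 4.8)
The plan is to prove the contrapositive in the equivalent direct form: from an orbit of $g$ that escapes a neighbourhood of $\b0$, manufacture an orbit of $f$ that does the same. The first step is to reformulate stability of the homogeneous map $g$. By linear homogeneity, $\b0$ is Lyapunov stable for $g$ if and only if $M_n := \sup_{\bz \in \mathbb{S}^{d-1}} \| g^n(\bz) \|$ is a bounded sequence, in which case $\| g^n(\bx) \| \le M_n \| \bx \|$ for all $\bx$; and $M_{m+n} \le M_m M_n$, again by homogeneity. Fekete's lemma then gives that $\sigma := \lim_{n\to\infty} M_n^{1/n} = \inf_n M_n^{1/n}$ exists and $M_n \ge \sigma^n$ for every $n$. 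So $\sigma < 1$ forces $M_n \to 0$ (and $\b0$ globally exponentially stable for $g$, compare Theorem \ref{th:hots}); $\sigma > 1$ forces $M_n \to \infty$ geometrically; and $\sigma = 1$ leaves $M_n$ either bounded or unbounded. Hence the hypothesis that $\b0$ is unstable for $g$ breaks into two regimes, (a) $\sigma > 1$, and (b) $\sigma = 1$ with $M_n$ unbounded. It is also convenient to note that one may replace $(g,f)$ by $(g^p,f^p)$ for any $p \ge 1$, since $f^p - g^p = \co(\bx)$ by Lemma \ref{le:figiBound}, $g^p$ is again continuous and linearly homogeneous, and instability of $\b0$ for $f^p$ implies it for $f$.

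For regime (a) I would realise the escape of $g$ along a single orbit on the sphere and then shadow it with $f$. Using the dilation framework of \S\ref{sec:dilation} (after the routine reduction to the non-degenerate case $g(\bz) \ne \b0$ for $\bz \ne \b0$, so that $D$ and $G$ in \eqref{eq:DG} are defined), one has $\tfrac1n \ln M_n = \sup_{\bz} \tfrac1n \sum_{i=0}^{n-1} \ln D(G^i(\bz))$ by \eqref{eq:gn2}, and a standard variational argument — weak-$*$ compactness of empirical measures together with Birkhoff's theorem — identifies $\ln \sigma$ with $\sup_\mu \lambda(\mu)$ over $G$-invariant probability measures $\mu$, with $\lambda$ as in \eqref{eq:lambda}. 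When $\sigma > 1$ there is therefore an ergodic $G$-invariant $\mu$ with $\lambda(\mu) > 0$, so $\mu$-almost every $\bz$ has $\| g^n(\bz) \| \to \infty$ at exponential rate $\approx \lambda(\mu)$; alternatively, passing to a power of $g$, one aims for a ray fixed by $g$ along which $g$ expands by a factor $c > 1$. I would then start an $f$-orbit at $\eta \bz$ with $\eta$ small, track its deviation from the scaled $g$-orbit $\eta g^n(\bz)$ iterate by iterate using Lemma \ref{le:figiBound}, and use the geometric growth of the $g$-shadow to keep the $\co$-error subdominant until the orbit leaves a fixed ball $B_{\ee_0}$ — giving instability of $\b0$ for $f$.

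The hard part is exactly this shadowing, together with regime (b). In regime (a) the number of iterates needed to grow from scale $\eta$ to $\ee_0$ is of order $\log(\ee_0/\eta)/\log\sigma$, which diverges as $\eta \to 0$, whereas the bound supplied by Lemma \ref{le:figiBound} is not uniform in the number of iterates; propagating the shadowing error over this widening window seems to require more than mere continuity of $g$ — e.g.\ a Lipschitz bound, which holds for the piecewise-linear map of primary interest in \S\ref{sec:fps} but is not assumed in the statement. Even granting a Lipschitz bound, if the escaping direction is not transversally contracting for $G$ the perturbed orbit may drift off it, so one would localise to a forward-invariant cone around the escaping orbit (available when $G$ is suitably source-like there) and run a perturbation-of-an-expanding-map argument inside it. Regime (b), $\sigma = 1$ with $M_n$ unbounded, looks the most delicate: the escape is only subexponential, so there is no geometric margin to absorb the $\co$-error accumulated over the superlinearly many iterates needed to grow by a fixed factor, and the argument above collapses. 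Settling this case — or determining whether an additional hypothesis is genuinely needed — is, I expect, the real obstacle, and presumably the reason the statement is posed as a conjecture.
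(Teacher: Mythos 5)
You are attempting Conjecture \ref{cj:hots2}, which the paper deliberately leaves unproved: the only support offered there is the heuristic that $g$ is linear in radial directions, so an orbit of $g$ escaping $\b0$ should escape for $f$ at the same asymptotic rate --- in effect the claim that an unstable manifold of $g$ persists under the $\co(\bx)$ perturbation --- together with the admission that a stable/unstable manifold theorem for such non-differentiable maps is not yet available. So there is no proof in the paper to compare against, and by your own account your proposal is not a proof either; the honest question is whether your partial reductions are sound and whether the gaps you flag are the real ones.

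The reductions are largely sound: Lyapunov stability of $\b0$ for $g$ is equivalent to boundedness of $M_n=\sup_{\bz\in\mathbb{S}^{d-1}}\|g^n(\bz)\|$, the submultiplicativity $M_{m+n}\le M_m M_n$ and the Fekete dichotomy are correct, the passage to $(g^p,f^p)$ is legitimate via Lemma \ref{le:figiBound}, and (when $g$ vanishes only at $\b0$) the identification $\ln\sigma=\sup_\mu\lambda(\mu)$ over $G$-invariant measures is the standard ergodic-optimization fact, consistent with Theorem \ref{th:dilation}(ii), which says precisely that stability forces $\lambda(\mu)\le 0$ for every ergodic $\mu$. But the gaps you name are genuine and are the crux. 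Lemma \ref{le:figiBound} controls $f^n-g^n$ only for each fixed $n$ (the admissible $\delta$ degenerates as $n$ grows), while escaping from scale $\eta$ to a fixed ball requires a number of iterates that diverges as $\eta\to 0$, so the accumulated error cannot be absorbed without structure beyond continuity --- a Lipschitz bound plus some transversal hyperbolicity of the escaping set of $G$ --- none of which is hypothesised; and in the regime $\sigma=1$ with $M_n$ unbounded there is no exponential margin at all. Two further soft spots: the ``routine reduction'' to $g(\bz)\ne\b0$ for $\bz\ne\b0$ is not routine, since the paper explicitly excludes that case from the framework \eqref{eq:DG} ($G$ may then be discontinuous); and the hope of obtaining, after passing to a power, a fixed ray along which $g$ expands is unavailable in general, because the maximizing ergodic measure need not be carried by a periodic orbit of $G$ --- in the piecewise-linear case fixed rays correspond to positive eigenvalues (Lemma \ref{le:fps}), which can be absent even when $\sigma>1$. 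So the conjecture remains open; your diagnosis of where the obvious shadowing argument fails matches the paper's stated reason for posing the statement as a conjecture rather than a theorem.
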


Conjecture \ref{cj:hots2} is expected to be true because $g$ is essentially linear in radial directions.
For this reason an orbit of $g$ that heads away from $\b0$ is expected to
do so for $f$ at the same asymptotic rate.
This is effectively claiming that an unstable manifold of $g$ persists for $f$.
It remains to be seen if the stable manifold theorem can be generalised to
the present situation involving non-differentiable maps.

\section{Average dilation on invariant probability measures}
\label{sec:dilation}

Here we consider a continuous, linearly homogeneous map $g : \mathbb{R}^d \to \mathbb{R}^d$, where $d \ge 2$.
We assume $g(\bx) = \b0$ only for $\bx = \b0$ so that $D$ and $G$, as defined by \eqref{eq:DG},
are well-defined and continuous.

To describe the size of subsets of $\mathbb{R}^d$ and $\mathbb{S}^{d-1}$ we use the most natural measures available for these spaces.
Specifically, on $\mathbb{R}^d$ we use the Lebesgue measure and on $\mathbb{S}^{d-1}$ we use the {\em spherical measure}.
For any $\Omega \subset \mathbb{S}^{d-1}$, the spherical measure of $\Omega$ is
\begin{equation}
{\rm sph~meas}(\Omega) =
\frac{{\rm meas} \!\left( \{ \alpha \bz \,|\, \bz \in \Omega, 0 \le \alpha \le 1 \} \right)}{{\rm meas}(B_1)} \;.
\label{eq:sphericalMeasure}
\end{equation}

We begin by considering physical measures of $G$.
A physical measure is an invariant probability measure whose basin has positive measure.
The basin of an invariant probability measure $\mu$ of $G$ is the set of all $\bz \in \mathbb{S}^{d-1}$ for which
\begin{equation}
\lim_{n \to \infty} \frac{1}{n} \sum_{i=0}^{n-1} \varphi \!\left( G^i(\bz) \right) =
\int_{\mathbb{S}^{d-1}} \varphi \,d\mu \;,
\label{eq:basin}
\end{equation}
for all continuous $\varphi : \mathbb{S}^{d-1} \to \mathbb{R}$.

\begin{theorem}
Let $g : \mathbb{R}^d \to \mathbb{R}^d$ be continuous and linearly homogeneous with
$g(\bx) = \b0$ only for $\bx = \b0$.
Let $\mu_1,\ldots,\mu_m$ be a collection of physical measures of $G$, given by \eqref{eq:DG},
and let $\eta_1,\ldots,\eta_m$ denote the (spherical) measures of their basins.
Suppose $\sum_{j=1}^m \eta_j = 1$ (i.e.~the union of the basins has full measure on $\mathbb{S}^{d-1}$)
and $\lambda(\mu_j) \ne 0$ for each $j \in \{ 1,\ldots,m \}$.
Then $\b0$ is a measure-$\rho$ stable fixed point of $g$, where
\begin{equation}
\rho = \sum_{\lambda(\mu_j) < 0} \eta_j \;.
\label{eq:rhoPhysical}
\end{equation}
\label{th:physical}
\end{theorem}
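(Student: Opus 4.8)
The plan is to pass to radial coordinates and read off the fate of each orbit from the Birkhoff averages of $\ln D$ along orbits of $G$, and then assemble the measure-$\rho$ stability statement by a polar-coordinates computation.

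First I would exploit linear homogeneity: writing $\bx = t \bz$ with $t = \| \bx \| \ge 0$ and $\bz \in \mathbb{S}^{d-1}$, we have $g^n(\bx) = t \, g^n(\bz)$, so the limiting behaviour of the orbit of $\bx$ depends only on $\bz$. By \eqref{eq:gn2}, $\| g^n(\bz) \| = {\rm exp}\bigl[ \sum_{i=0}^{n-1} \ln D(G^i(\bz)) \bigr]$. Since $D$ is continuous and strictly positive on the compact set $\mathbb{S}^{d-1}$, the function $\ln D$ is continuous, so for $\bz$ in the basin of $\mu_j$ the defining relation \eqref{eq:basin} applied with $\varphi = \ln D$ gives $\frac1n \sum_{i=0}^{n-1} \ln D(G^i(\bz)) \to \lambda(\mu_j)$. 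Now invoke $\lambda(\mu_j) \ne 0$: if $\lambda(\mu_j) < 0$ then the partial sums diverge to $-\infty$, so $g^n(\bz) \to \b0$, and $M(\bz) := \sup_{n \ge 0} \| g^n(\bz) \|$ is finite (and $\ge 1$); if $\lambda(\mu_j) > 0$ then the partial sums diverge to $+\infty$, so $\| g^n(\bz) \| \to \infty$ and hence $g^n(t\bz) = t \, g^n(\bz)$ eventually leaves every ball for every $t > 0$.

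Next I would evaluate ${\rm meas}(A(\ee,\delta))$ from the definition \eqref{eq:rho}. Let $U$ be the union of the basins of those $\mu_j$ with $\lambda(\mu_j) < 0$ and $V$ the union of the basins of those with $\lambda(\mu_j) > 0$. Distinct physical measures have disjoint basins (two agreeing Birkhoff limits force the measures to coincide), so ${\rm sph~meas}(U) = \sum_{\lambda(\mu_j) < 0} \eta_j = \rho$ and ${\rm sph~meas}(V) = 1 - \rho$, and $U \cup V$ has full spherical measure by $\sum_j \eta_j = 1$. For $\bz \in U$ the orbit of $t\bz$ converges to $\b0$ for every $t$, and $g^n(t\bz) \in B_\ee$ for all $n$ exactly when $t \, M(\bz) < \ee$ (up to the single value $t = \ee/M(\bz)$, immaterial for Lebesgue measure); so the set of $t$ with $t\bz \in A(\ee,\delta)$ is an interval of length $\min\{\delta, \ee/M(\bz)\}$. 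For $\bz \in V$ only $t = 0$ qualifies, and the cone over $\mathbb{S}^{d-1} \setminus (U \cup V)$ is Lebesgue-null. Integrating in polar coordinates, with the spherical measure normalised as in \eqref{eq:sphericalMeasure} so that ${\rm sph~meas}(\mathbb{S}^{d-1}) = 1$, gives
\[
{\rm meas}\bigl( A(\ee,\delta) \bigr) = {\rm meas}(B_1) \int_U \min\{ \delta, \ee/M(\bz) \}^d \, d({\rm sph~meas})(\bz),
\]
and since ${\rm meas}(B_\delta) = {\rm meas}(B_1) \, \delta^d$,
\[
\frac{{\rm meas}(A(\ee,\delta))}{{\rm meas}(B_\delta)} = \int_U \min\Bigl\{ 1, \tfrac{\ee}{\delta \, M(\bz)} \Bigr\}^d \, d({\rm sph~meas})(\bz).
\]

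Finally I would take the iterated limit of \eqref{eq:rho}. With $\ee$ fixed and $\delta \to 0^+$, the integrand increases to $1$ at every $\bz \in U$ (because $M(\bz) < \infty$ there) and is bounded by $1$, so dominated convergence gives $\lim_{\delta \to 0} {\rm meas}(A(\ee,\delta))/{\rm meas}(B_\delta) = {\rm sph~meas}(U) = \rho$; this value does not depend on $\ee$, so letting $\ee \to 0$ leaves it unchanged, and $\b0$ is measure-$\rho$ stable with $\rho$ as in \eqref{eq:rhoPhysical}. I expect the only delicate part to be the bookkeeping in the third step — checking that the basins (and the supremum $M$) are measurable, getting the normalisation in the polar-coordinates change of variables exactly right, and correctly accounting for the $B_\ee$-confinement requirement; indeed it is precisely that requirement, through the factor $M(\bz)$, that makes the order of limits in \eqref{eq:rho} ($\delta \to 0$ before $\ee \to 0$) matter.
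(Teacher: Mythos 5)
Your proposal is correct and follows essentially the same route as the paper: Birkhoff averages of $\ln D$ along the basins of the physical measures determine whether each ray converges to $\b0$ or diverges, and linear homogeneity reduces the measure computation to the spherical measure of the cones over the basins with $\lambda(\mu_j)<0$. The only difference is bookkeeping: where the paper rescales the double limit in \eqref{eq:rho} to a single limit $\rho = \lim_{N\to\infty} {\rm meas}(A(N,1))/{\rm meas}(B_1)$, you evaluate ${\rm meas}(A(\ee,\delta))$ directly by a polar-coordinate integral involving $M(\bz)=\sup_{n\ge 0}\left\| g^n(\bz) \right\|$ and pass to the limit by monotone convergence, an equally valid and somewhat more explicit treatment of the $B_\ee$-confinement.
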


\begin{proof}
For each $j \in \{ 1,\ldots,m \}$, let $\Psi_j \subset \mathbb{S}^{d-1}$ denote the basin of $\mu_j$.
For any $\bx \in \Psi_j$, by \eqref{eq:gn2} putting $\varphi = \ln(D)$ into \eqref{eq:basin} gives
$\lim_{n \to \infty} \ln \!\left( \left\| g^n(\bz) \right\|^{\frac{1}{n}} \right) =
\int_{\mathbb{S}^{d-1}} \ln(D) \,d\mu_j$.
That is,
\begin{equation}
\lim_{n \to \infty} \left\| g^n(\bz) \right\|^{\frac{1}{n}} = {\rm e}^{\lambda(\mu_j)} \;.
\label{eq:physicalProof1}
\end{equation}
Let $Q_{\rm neg} \subset \{ 1,\ldots,m \}$ denote the set of all $j \in \{ 1,\ldots,m \}$
for which $\lambda(\mu_j) < 0$, and let
$Q_{\rm pos} = \{ 1,\ldots,m \} \hspace{-.5mm}\setminus\! Q_{\rm neg}$.
Let
\begin{align*}
X_{\rm neg} &= \bigcup_{j \in Q_{\rm neg}}
\left\{ \alpha \bz ~\middle|~ \bz \in \Psi_j, 0 \le \alpha \le 1 \right\}, \\
X_{\rm pos} &= \bigcup_{j \in Q_{\rm pos}}
\left\{ \alpha \bz ~\middle|~ \bz \in \Psi_j, 0 \le \alpha \le 1 \right\}.
\end{align*}
Choose any $\bx \in X_{\rm neg}$.
Then $\bx = \alpha \bz$ for some $\bz \in \Psi_j$ and $0 \le \alpha \le 1$
where $j \in Q_{\rm neg}$.
By \eqref{eq:physicalProof1} $g^n(\bz) \to \b0$ as $n \to \infty$ because $\lambda(\mu_j) < 0$.
Then $g^n(\bx) \to \b0$ as $n \to \infty$ by the linear homogeneity of $g$.
Similarly $\left\| g^n(\bx) \right\| \to \infty$ as $n \to \infty$ for any $\bx \in X_{\rm pos}$.

By the linear homogeneity of $g$, \eqref{eq:rho} can be rewritten as
\begin{equation}
\rho = \lim_{N \to \infty} \frac{{\rm meas}(A(N,1))}{{\rm meas}(B_1)} \;,
\label{eq:rho2}
\end{equation}
where $A(N,1)$ is the set of all $\bx \in B_1$ for which $g^n(\bx) \to \b0$ as $n \to \infty$ with
$g^n(\bx) \in B_N$ for all $n \ge 0$.
The set $\lim_{N \to \infty} A(N,1)$ contains all points in $X_{\rm neg}$ and no points in $X_{\rm pos}$.
Thus by \eqref{eq:sphericalMeasure} we have $\rho = \sum_{j \in X_{\rm neg}} \eta_j$ as required.
\end{proof}

Equation \eqref{eq:rhoPhysical} gives the value of $\rho$ as the sum of all $\mu_j$ for which $\lambda(\mu_j) < 0$.
For any $\bz \in \mathbb{S}^{d-1}$ in the basin of $\mu_j$,
if $\lambda(\mu_j) < 0$ then $g^n(\bz) \to \b0$ as $n \to \infty$,
while if $\lambda(\mu_j) > 0$ then $\left\| g^n(\bz) \right\| \to \infty$ as $n \to \infty$.
For this reason we can obtain further insight by considering the space $\mathbb{R}^d$
together with the point at infinity.
If $\rho > 0$ then $\b0$ is a Milnor attractor,
while if $\rho < 1$ then the point at infinity is a Milnor attractor.
With the assumptions of Theorem \ref{th:physical}
these are the only two Milnor attractors possible for $g$.

Theorem \ref{th:physical} is practical as there are rarely a large number of physical measures
(indeed often there is just one).
The value of $\rho$ can be calculated
by evaluating $\lambda(\mu_j)$ for each physical measure $\mu_j$.
However, for asymptotic stability the following result refers us to ergodic measures
of which there may be uncountably many.

\begin{theorem}
Let $g : \mathbb{R}^d \to \mathbb{R}^d$ be continuous and linearly homogeneous with
$g(\bx) = \b0$ only for $\bx = \b0$,
and let $G$ be given by \eqref{eq:DG}.
\begin{enumerate}
\item
If $\lambda(\mu) < 0$ for every ergodic invariant probability measure $\mu$ of $G$,
then $\b0$ is an asymptotically stable fixed point of $g$.
\item
If $\b0$ is a Lyapunov stable fixed point of $g$,
then $\lambda(\mu) \le 0$ for every ergodic invariant probability measure $\mu$ of $G$.
\end{enumerate}
\label{th:dilation}
\end{theorem}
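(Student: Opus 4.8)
The plan is to leverage the multiplicative structure \eqref{eq:gn2}, which says that $\ln\|g^n(\bz)\|$ is a Birkhoff sum of $\ln(D)$ along the $G$-orbit of $\bz$, and then use compactness of $\mathbb{S}^{d-1}$ together with the ergodic decomposition / the sub-additive or Birkhoff ergodic theorem to pass from a condition on all ergodic measures to a uniform decay rate. For part (i), first I would observe that it suffices to show there exist $n_0 \ge 1$ and $c < 1$ with $\|g^{n_0}(\bz)\| \le c$ for all $\bz \in \mathbb{S}^{d-1}$; indeed once such $n_0, c$ are found, linear homogeneity gives $\|g^{n_0}(\bx)\| \le c\|\bx\|$ for all $\bx$, hence $g^n(\bx) \to \b0$ on all of $\mathbb{R}^d$, and Lemma \ref{le:clh}(i) upgrades this to asymptotic (indeed global asymptotic) stability. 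To produce such $n_0$: suppose not, so for every $n$ there is $\bz_n$ with $\|g^n(\bz_n)\| \ge 1$, i.e. $\frac{1}{n}\sum_{i=0}^{n-1}\ln(D(G^i(\bz_n))) \ge 0$. Push forward the empirical measures $\nu_n = \frac{1}{n}\sum_{i=0}^{n-1}\delta_{G^i(\bz_n)}$; by weak-$*$ compactness of probability measures on the compact space $\mathbb{S}^{d-1}$, a subsequence converges to some measure $\nu$, which is $G$-invariant by the standard Krylov--Bogolyubov argument, and satisfies $\int \ln(D)\,d\nu \ge 0$ because $\ln(D)$ is continuous (hence bounded) on $\mathbb{S}^{d-1}$. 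By the ergodic decomposition of $\nu$ there is an ergodic component $\mu$ with $\lambda(\mu) = \int \ln(D)\,d\mu \ge 0$, contradicting the hypothesis. This gives $n_0$ and $c$, completing part (i).

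For part (ii), I would argue the contrapositive: suppose some ergodic invariant probability measure $\mu$ of $G$ has $\lambda(\mu) > 0$, and show $\b0$ is not Lyapunov stable. By the Birkhoff ergodic theorem applied to the continuous function $\ln(D)$, for $\mu$-almost every $\bz \in \mathbb{S}^{d-1}$ we have $\frac{1}{n}\sum_{i=0}^{n-1}\ln(D(G^i(\bz))) \to \lambda(\mu) > 0$, so by \eqref{eq:gn2} $\|g^n(\bz)\| \to \infty$. Fix one such $\bz$. Then for every $\delta > 0$ the point $\bx = \delta\bz/\|\bz\| = \delta\bz$ lies in $B_{2\delta}$ (say), yet $\|g^n(\bx)\| = \delta\|g^n(\bz)\| \to \infty$; in particular, for any prescribed $\ee > 0$, no $\delta$ works in the definition of Lyapunov stability, since points arbitrarily close to $\b0$ have forward orbits leaving $B_\ee$. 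Hence $\b0$ is not Lyapunov stable, which is the contrapositive of (ii).

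The main obstacle is the compactness/limiting argument in part (i): one must be careful that the weak-$*$ limit $\nu$ of the empirical measures is genuinely $G$-invariant and that the inequality $\int \ln(D)\,d\nu \ge 0$ survives the limit — both are standard precisely because $G$ is continuous and $\ln(D)$ is continuous on the compact sphere (using $D > 0$ and continuous so $\ln(D)$ is bounded), but these continuity hypotheses are exactly what the standing assumption "$g(\bx) = \b0$ only for $\bx = \b0$" buys us, and this should be flagged. A secondary subtlety is the invocation of the ergodic decomposition to extract an ergodic component with $\lambda \ge 0$ from a non-ergodic $\nu$; alternatively one can avoid it by noting $\lambda(\cdot)$ is affine and weak-$*$ continuous on the (compact, convex) set of invariant measures, so its supremum is attained at an extreme point, i.e. an ergodic measure. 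Everything else — the reduction via linear homogeneity in (i), and the direct construction of a diverging orbit in (ii) — is routine.
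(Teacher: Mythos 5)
Your proof is correct. Part (ii) is essentially the paper's own argument: the paper takes a point in the basin of an ergodic $\mu$ with $\lambda(\mu)>0$ and scales it; your use of Birkhoff's theorem produces the same point, and the rest is identical. For part (i), however, you take a genuinely different route. The paper fixes a single $\bz\in\mathbb{S}^{d-1}$, forms the empirical measures along the orbit of that one point, extracts a limit point $\mu$ realizing $\limsup_n \left\| g^n(\bz)\right\|^{1/n}={\rm e}^{\lambda(\mu)}$, and then needs a \emph{uniform} negative bound $\lambda(\sigma)\le -a<0$ over all ergodic $\sigma$ (asserted via compactness of $\mathcal{E}_G$ and continuity of $\lambda$ on it) to conclude pointwise convergence for every $\bz$, finally invoking Lemma \ref{le:clh}(i). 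You instead argue by contradiction for a uniform contraction of the sphere: if no iterate satisfies $\left\|g^{n_0}(\bz)\right\|\le c<1$ on all of $\mathbb{S}^{d-1}$, you obtain $\bz_n$ with $\left\|g^n(\bz_n)\right\|\ge 1$, and the empirical measures with \emph{varying} base points have a weak-$*$ limit that is invariant (the Krylov--Bogolyubov argument does tolerate varying base points) with $\lambda\ge 0$; ergodic decomposition, or the Bauer-type extreme-point argument you mention, then yields an ergodic measure with $\lambda\ge 0$, a contradiction. This buys two things: you obtain the uniform bound $\left\|g^{n_0}(\bx)\right\|\le c\|\bx\|$ directly (so in fact exponential-type decay, in the spirit of Theorem \ref{th:hots}), and you need only the soft fact that an invariant measure with $\lambda\ge 0$ has an ergodic component with $\lambda\ge 0$, thereby bypassing the paper's more delicate step that the set of ergodic measures is compact and $\lambda$ attains a negative supremum on it --- a point worth care, since $\mathcal{E}_G$ is generally not closed. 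The paper's route, in exchange, reuses Lemma \ref{le:clh} and stays within the machinery already developed. One step you leave implicit: from $\left\|g^{n_0}(\bx)\right\|\le c\|\bx\|$, convergence of the full sequence $g^n(\bx)\to\b0$ (not merely along multiples of $n_0$) requires the trivial intermediate bound $\left\|g^j(\bx)\right\|\le K^j\|\bx\|$ with $K=\max_{\bz\in\mathbb{S}^{d-1}}\|g(\bz)\|$ before Lemma \ref{le:clh}(i) is applied; this is routine but should be said.
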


Part (i) of Theorem \ref{th:dilation} is more difficult to prove than Theorem \ref{th:physical}
because we must consider all $\bz \in \mathbb{S}^{d-1}$, not just almost all $\bz \in \mathbb{S}^{d-1}$.
Below we first consider the sequence of time-average probability measures
$\mu_n = \frac{1}{n} \sum_{i=0}^{n-1} \delta_{G^i(\bz)}$,
where we use $\delta_\by$ to denote the Dirac probability measure supported at a point $\by \in \mathbb{S}^{d-1}$.
We then identify a suitable limit point $\mu$ of $\{ \mu_n \}$ and
use the ergodic decomposition theorem to express $\mu$ as an average of ergodic measures.

\begin{proof}
Let $\mathcal{M}_G$ denote the set of all invariant probability measures of $G$
and let $\mathcal{E}_G \subset \mathcal{M}_G$ denote the set of all such measures that are ergodic.
We prove the two parts of the theorem in order.
\begin{enumerate}
\item
Choose any $\bz \in \mathbb{S}^{d-1}$.
We will show that $g^n(\bz) \to \b0$ as $n \to \infty$,
and so by linear homogeneity $g^n(\bx) \to \b0$ as $n \to \infty$ for all $\bx \in \mathbb{R}^d$.
Then by Lemma \ref{le:clh}(i), $\b0$ must be an asymptotically stable fixed point of $g$.

For all $n \ge 1$, $\mu_n = \frac{1}{n} \sum_{i=0}^{n-1} \delta_{G^i(\bz)}$
is a probability measure on $\mathbb{S}^{d-1}$.
By \eqref{eq:gn2},
\begin{align}
\left\| g^n(\bz) \right\|^{\frac{1}{n}} &=
{\rm exp} \!\left[ \frac{1}{n} \sum_{i=0}^{n-1}
\ln \!\left( D \!\left( G^i(\bz) \right) \right) \right] \nonumber \\
&={\rm exp} \!\left[ \frac{1}{n} \sum_{i=0}^{n-1}
\int_{\mathbb{S}^{d-1}} \ln(D) \,d\delta_{G^i(z)} \right] \nonumber \\
&={\rm exp} \!\left[ \int_{\mathbb{S}^{d-1}} \ln(D) \,d\mu_n \right]. \nonumber
\end{align}
Since the sequence $\{ \mu_n \}$ may be non-convergent,
we consider the set of all limit points of $\{ \mu_n \}$, call it $\Xi$.
The set $\Xi$ is non-empty and weak-compact \cite{DeGr76}\removableFootnote{
See Proposition 3.8 of \cite{DeGr76}.
In my notation, they prove that $\Xi$ is closed in the set of all invariant measures of $G$
(in the proof they say ``Obviously $V_T(\mu)$ is closed'',
presumably because $\Xi$ is defined as a set of limit points and so must be closed).
Since the set of all invariant probability measures of $G$ is bounded, $\Xi$ is bounded and hence weak*-compact
(although I don't really understand why weak*-compact instead of just compact).

Having thought more, this should be weak-compact rather than weak*-compact
(because I am doing measure theory and not functional analysis/topology).
By weak-compact, we mean that $\Xi$ is bounded and closed (i.e.~contains its limit points) in the weak topology.
That is, for every sequence $\xi_n$ in $\Xi$
that converges weakly, i.e.~$\int f \,d\xi_n \to \int f \,d\xi$ for every bounded continuous $f$,
we have $\xi \in \Xi$.
},
thus there exists $\mu \in \Xi$ such that
\begin{equation}
\limsup_{n \to \infty} \left\| g^n(\bz) \right\|^{\frac{1}{n}} =
{\rm exp} \!\left[ \int_{\mathbb{S}^{d-1}} \ln(D) \,d\mu \right].
\nonumber
\end{equation}
Since $\Xi \subset \mathcal{M}_G$ \cite{Wa82}\removableFootnote{
Specifically Theorem 6.9.
},
$\mu$ is an invariant probability measure of $G$ and so
\begin{equation}
\limsup_{n \to \infty} \left\| g^n(\bz) \right\|^{\frac{1}{n}} = {\rm e}^{\lambda(\mu)} \;.
\label{eq:dilationProof3}
\end{equation}

By the ergodic decomposition theorem \cite{KaMc10,Wa82}
there exists a unique measure $\tau$ on $\mathcal{M}_G$
with $\tau \!\left( \mathcal{E}_G \right) = 1$ such that
$\int_{\mathbb{S}^{d-1}} \varphi \,d\mu =
\int_{\mathcal{E}_G} \int_{\mathbb{S}^{d-1}} \varphi \,d\sigma \,d\tau(\sigma)$
for all continuous $\varphi : \mathbb{S}^{d-1} \to \mathbb{R}$.
Putting $\varphi = \ln(D)$ gives
\begin{equation}
\lambda(\mu) = \int_{\mathcal{E}_G} \lambda(\sigma) \,d\tau(\sigma).
\label{eq:dilationProof4}
\end{equation}
By assumption $\lambda(\sigma) < 0$ for every $\sigma \in \mathcal{E}_G$.
Since $\mathcal{E}_G$ is compact
and $\lambda(\sigma)$ is a continuous function on $\mathcal{E}_G$\removableFootnote{
Which is because $\sigma_n \to \sigma$ strongly implies $\sigma_n \to \sigma$ weakly.
},
there exists $a > 0$ such that $\lambda(\sigma) < -a$ for all $\sigma \in \mathcal{E}_G$.
By \eqref{eq:dilationProof4}, $\lambda(\mu) \le -a < 0$,
and so by \eqref{eq:dilationProof3} we have $g^n(\bz) \to \b0$ as $n \to \infty$.
\item
Suppose (for a proof by contrapositive) that $\lambda(\mu) > 0$ for some $\mu \in \mathcal{E}_G$.
Let $\bz \in \mathbb{S}^{n-1}$ be a point in the basin of $\mu$
(this basin is non-empty because $\mu$ is ergodic \cite{Wa82})\removableFootnote{
Such a $\bz$ exists by Lemma 6.13 of \cite{Wa82}.
By ``the'' ergodic theorem, one could in fact take almost any (with respect to $\mu$) $\bz$,
see Theorem 6.14 of \cite{Wa82}.
}.
As in the proof of Theorem \ref{th:physical}, putting $\varphi = \ln(D)$ into \eqref{eq:basin} leads to 
$\lim_{n \to \infty} \left\| g^n(\bz) \right\|^{\frac{1}{n}} = {\rm e}^{\lambda(\mu)}$,
and hence $\left\| g^n(\bz) \right\| \to \infty$ as $n \to \infty$.
Since $g$ is linearly homogeneous,
$\left\| g^n(\alpha \bz) \right\| \to \infty$ as $n \to \infty$ for all $\alpha > 0$.
Thus $\b0$ is not a Lyapunov stable fixed point of $g$.
\end{enumerate}
\end{proof}

\section{The relationship between fixed points of $G$ and positive eigenvalues of $A_L$ and $A_R$}
\label{sec:fps}

Here we consider the piecewise-linear approximation \eqref{eq:g} with $d \ge 2$.
As long as $0$ is not an eigenvalue of $A_L$ or $A_R$,
we have $g(\bx) = \b0$ only for $\bx = \b0$ and so $D$ and $G$ are well-defined and continuous and given by
\begin{equation}
D(\bz) = \begin{cases}
\| A_L \bz \|, & B^{\sf T} \bz \le 0 \;, \\
\| A_R \bz \|, & B^{\sf T} \bz \ge 0 \;,
\end{cases} \qquad
G(\bz) = \begin{cases}
\frac{A_L \bz}{\| A_L \bz \|} \;, & B^{\sf T} \bz \le 0 \;, \\
\frac{A_R \bz}{\| A_R \bz \|} \;, & B^{\sf T} \bz \ge 0 \;.
\end{cases}
\label{eq:DG2}
\end{equation}

Here we show how positive eigenvalues of $A_L$ and $A_R$
can be matched one-to-one with fixed points of $G$\removableFootnote{
I feel that Lemma \ref{le:fps} is written remarkably concisely.
I tried writing Lemmas \ref{le:fps} and \ref{le:ray} as a single lemma
but the result was longer than the concatenation of Lemmas \ref{le:fps} and \ref{le:ray}.
}.
More generally a periodic solution of $G$ corresponds to an eigenvector
of a matrix formed by multiplying copies of $A_L$ and $A_R$
in the order determined by the itinerary of the periodic solution relative to the switching manifold \cite{Si16}.

\begin{lemma}
A point $\bz^* \in \mathbb{S}^{d-1}$ is a fixed point of $G$, given by \eqref{eq:DG2},
if and only if it is an eigenvector of $A_L$ or $A_R$ corresponding to an eigenvalue $\lambda > 0$.
Moreover, $\lambda = D(\bz^*)$.
\label{le:fps}
\end{lemma}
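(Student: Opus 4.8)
The plan is to prove the two implications separately, using the identity $g(\bz) = D(\bz)\,G(\bz)$ valid for every $\bz \in \mathbb{S}^{d-1}$ (this is \eqref{eq:g2} restricted to the unit sphere) together with the fact that, by \eqref{eq:DG2}, $g(\bz)$ equals $A_L\bz$ when $B^{\sf T}\bz \le 0$ and $A_R\bz$ when $B^{\sf T}\bz \ge 0$.

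For the forward direction I would suppose $\bz^*$ is a fixed point of $G$ and, assuming first that $B^{\sf T}\bz^* \le 0$, write
\[
A_L\bz^* = g(\bz^*) = \|g(\bz^*)\|\,G(\bz^*) = D(\bz^*)\,\bz^* .
\]
Since $0$ is not an eigenvalue of $A_L$, we have $A_L\bz^* \ne \b0$, so $D(\bz^*) > 0$, and hence $\bz^*$ is an eigenvector of $A_L$ with eigenvalue $\lambda = D(\bz^*) > 0$. The case $B^{\sf T}\bz^* \ge 0$ is handled the same way with $A_R$, and when $B^{\sf T}\bz^* = 0$ both branches apply and are consistent because $A_L\bz^* = A_R\bz^*$ (from $A_L - A_R = CB^{\sf T}$).

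For the converse I would take $\bz^*$ to be an eigenvector of $A_L$ with eigenvalue $\lambda > 0$ lying in the half-space $B^{\sf T}\bz^* \le 0$ (an eigenvector of $A_R$ with $B^{\sf T}\bz^* \ge 0$ being symmetric), so that $g(\bz^*) = A_L\bz^* = \lambda\bz^*$; then $D(\bz^*) = \|\lambda\bz^*\| = \lambda$ since $\lambda > 0$ and $\|\bz^*\| = 1$, and $G(\bz^*) = \frac{\lambda\bz^*}{\lambda} = \bz^*$, so $\bz^*$ is a fixed point of $G$ with $D(\bz^*) = \lambda$. This also settles the ``moreover'' claim.

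I do not expect a real obstacle, since everything reduces to substitution into \eqref{eq:DG2}; the only point needing care is the switching manifold. One must check that on $B^{\sf T}\bz^* = 0$ the two candidate formulas for $G$ coincide (by continuity of $g$, equivalently via $A_L - A_R = CB^{\sf T}$), and that in the converse the eigenvector be taken in the closed half-space attached to its matrix: an eigenvector of $A_L$ lying strictly in $B^{\sf T}\bz > 0$ need not be a fixed point of $G$, so the correspondence is properly between the positive eigenvalues of $A_L$ (resp.\ $A_R$) and the fixed points of $G$ in $\{B^{\sf T}\bz \le 0\}$ (resp.\ $\{B^{\sf T}\bz \ge 0\}$).
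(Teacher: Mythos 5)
Your proof is correct and takes essentially the same approach as the paper: substitute the fixed-point or eigenvector condition directly into \eqref{eq:DG2} and read off $\lambda = D(\bz^*)$. The half-space subtlety you flag in the converse is resolved in the paper by the replacement $\bz^* \mapsto -\bz^*$ (an eigenvector lying strictly in the wrong half-space has its antipode, with the same positive eigenvalue, as the corresponding fixed point of $G$), which amounts to the same reading as your restriction to eigenvectors in the matching closed half-space.
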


\begin{proof}
If $\bz^*$ is a fixed point of $G$,
then $A_J \bz^* = \| A_J \bz^* \| \bz^*$, where $J$ is either $L$ or $R$.
Thus $\bz^*$ is an eigenvector of $A_J$ corresponding to the positive eigenvalue $D(\bz^*)$.

Conversely, suppose $A_J \bz^* = \lambda \bz^*$, where $J$ is either $L$ or $R$ and $\lambda > 0$.
We may assume $B^{\sf T} \bz^* \le 0$ if $J = L$ and $B^{\sf T} \bz^* \ge 0$ if $J = R$
in view of the replacement $\bz^* \mapsto -\bz^*$.
Then $D(\bz^*) = \lambda$ and so $G(\bz^*) = \bz^*$.
\end{proof}

\begin{lemma}
Let $\bz^* \in \mathbb{S}^{d-1}$ be a fixed point of $G$, given by \eqref{eq:DG2}.
Then $g(\bx) = \lambda \bx$ for any $\bx \in \left\{ \alpha \bz^* ~\middle|~ \alpha \ge 0 \right\}$,
where $\lambda = D(\bz^*)$.
\label{le:ray}
\end{lemma}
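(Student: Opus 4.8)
The plan is to deduce this immediately from Lemma \ref{le:fps} together with the linear homogeneity of $g$. First I would apply Lemma \ref{le:fps}: because $\bz^*$ is a fixed point of $G$, it is an eigenvector of $A_L$ or $A_R$ with eigenvalue $\lambda = D(\bz^*)$, and $\lambda > 0$. The key intermediate fact I need is $g(\bz^*) = \lambda \bz^*$. This follows directly from the representation \eqref{eq:g2}: since $\| \bz^* \| = 1$, we have $g(\bz^*) = D(\bz^*)\, G(\bz^*) = \lambda \bz^*$, using $G(\bz^*) = \bz^*$. Equivalently, whichever branch $A_J$, $J \in \{L,R\}$, is applicable at $\bz^*$ satisfies $A_J \bz^* = \| A_J \bz^* \|\, \bz^* = D(\bz^*)\,\bz^*$ by unwinding the definition \eqref{eq:DG2} of $G$; on the switching manifold both branches agree by continuity, so the choice of $J$ is immaterial.

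Next I would take an arbitrary point $\bx$ of the ray, writing $\bx = \alpha \bz^*$ with $\alpha \ge 0$. If $\alpha = 0$ then $\bx = \b0$ and $g(\b0) = \b0 = \lambda \b0$, so the conclusion holds. If $\alpha > 0$, then by linear homogeneity \eqref{eq:linearlyHomogeneous} (valid since $\alpha \ge 0$),
\[ g(\bx) = g(\alpha \bz^*) = \alpha\, g(\bz^*) = \alpha \lambda \bz^* = \lambda (\alpha \bz^*) = \lambda \bx, \]
which is the claim. One could instead avoid \eqref{eq:linearlyHomogeneous} altogether by noting that $B^{\sf T} \bx = \alpha B^{\sf T} \bz^*$ has the same sign as $B^{\sf T} \bz^*$, so $\bx$ lies on the same side of the switching manifold as $\bz^*$ and $g$ acts on $\bx$ by the single linear map $A_J$; then $g(\bx) = A_J(\alpha \bz^*) = \alpha \lambda \bz^* = \lambda \bx$.

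There is no real obstacle here; the statement is essentially a one-line consequence of Lemma \ref{le:fps}. The only point deserving a moment's care is establishing the vector identity $g(\bz^*) = \lambda \bz^*$, rather than merely the scalar identity $\| g(\bz^*) \| = \lambda$, which is why I would route the argument through \eqref{eq:g2} (or the explicit branch $A_J$) and not through the eigenvalue statement of Lemma \ref{le:fps} in isolation.
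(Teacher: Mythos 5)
Your proposal is correct and follows essentially the same route as the paper: apply linear homogeneity to reduce to $\bz^*$, then use the representation \eqref{eq:g2} to get $g(\bz^*) = D(\bz^*)\, G(\bz^*) = \lambda \bz^*$. The extra remarks (the $\alpha = 0$ case, the appeal to Lemma \ref{le:fps}, and the alternative branch-$A_J$ argument) are harmless but not needed.
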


\begin{proof}
Choose any $\alpha \ge 0$ and let $\bx = \alpha \bz^*$.
By linear homogeneity, $g(\bx) = g(\alpha \bz^*) = \alpha g(\bz^*)$.
Using \eqref{eq:g2}, $g(\bz^*) = D(\bz^*) G(\bz^*) = \lambda \bz^*$.
Thus $g(\bx) = \lambda \alpha \bz^* = \lambda \bx$.
\end{proof}

\section{Theoretical results for two-dimensional non-invertible maps}
\label{sec:theory}

If the piecewise-linear approximation \eqref{eq:g} satisfies certain non-degeneracy conditions,
we can apply a linear coordinate change so that $A_L$ and $A_R$ are converted to companion matrices and
$B$ is converted to the first standard basis vector \cite{Di03,DiBu08,Si16}.
This produces the border-collision normal form evaluated at the border-collision bifurcation.
In two dimensions, $\bx = (x,y)$, this map may be written as
\begin{equation}
g(x,y) =
\begin{cases}
\begin{bmatrix} \tau_L & 1 \\ -\delta_L & 0 \end{bmatrix}
\begin{bmatrix} x \\ y \end{bmatrix}, & x \le 0 \;, \\
\begin{bmatrix} \tau_R & 1 \\ -\delta_R & 0 \end{bmatrix}
\begin{bmatrix} x \\ y \end{bmatrix}, & x \ge 0 \;,
\end{cases}
\label{eq:g2d}
\end{equation}
where $\tau_L,\delta_L,\tau_R,\delta_R \in \mathbb{R}$ are constants.

Here we study $g$, given by \eqref{eq:g2d}, in the case that it is non-invertible,
that is $\delta_L \delta_R \le 0$.
Ignoring the special case that either $\delta_L$ or $\delta_R$ is zero,
we may assume without loss of generality that
\begin{equation}
\delta_L > 0 \;, \qquad \delta_R < 0 \;.
\label{eq:deltaLR}
\end{equation}
Other studies of two-dimensional, non-invertible, piecewise-linear maps include \cite{GlWo09,GlWo11,MiGa96}.

With \eqref{eq:deltaLR}, the range of $g$ is the set of all points $(x,y)$ with $y \ge 0$.
The boundary of the range, $y = 0$,
is the image of the switching manifold, $x = 0$.

Since $g$ is linearly homogeneous, it is convenient to work in polar coordinates
\begin{equation}
x = r \cos(\theta), \qquad y = r \sin(\theta).
\label{eq:polar}
\end{equation}
Since the range of $g$ is $y \ge 0$, and no point in the range maps to the negative $x$-axis,
we consider only $\theta \in [0,\pi)$ unless otherwise stated.
This allows us to write $\theta = \tan^{-1} \!\left( \frac{y}{x} \right)$ without ambiguity
(ignoring the fixed point $(x,y) = (0,0)$, and if $x=0$ we mean this to give $\theta = \frac{\pi}{2}$).

Next we write $D$ and $G$ in terms of $\theta$ so that they are explicitly one-dimensional.
We have
\begin{equation}
D(\theta) = \begin{cases}
\sqrt{\left( \tau_R \cos(\theta) + \sin(\theta) \right)^2 + \delta_R^2 \cos^2(\theta)} \;, &
0 \le \theta \le \frac{\pi}{2} \;, \\
\sqrt{\left( \tau_L \cos(\theta) + \sin(\theta) \right)^2 + \delta_L^2 \cos^2(\theta)} \;, &
\frac{\pi}{2} \le \theta < \pi \;,
\end{cases}
\label{eq:D2d}
\end{equation}
and
\begin{equation}
G(\theta) = \begin{cases}
\tan^{-1} \!\left( \frac{-\delta_R}{\tau_R + \tan(\theta)} \right), &
0 \le \theta \le \frac{\pi}{2} \;, \\
\tan^{-1} \!\left( \frac{-\delta_L}{\tau_L + \tan(\theta)} \right), &
\frac{\pi}{2} \le \theta < \pi \;,
\end{cases}
\label{eq:G2d}
\end{equation}
see Fig.~\ref{fig:ppPolar1}-A.

\begin{figure}[b!]
\begin{center}
\setlength{\unitlength}{1cm}
\begin{picture}(15.6,8)
\put(0,0){\includegraphics[width=8cm]{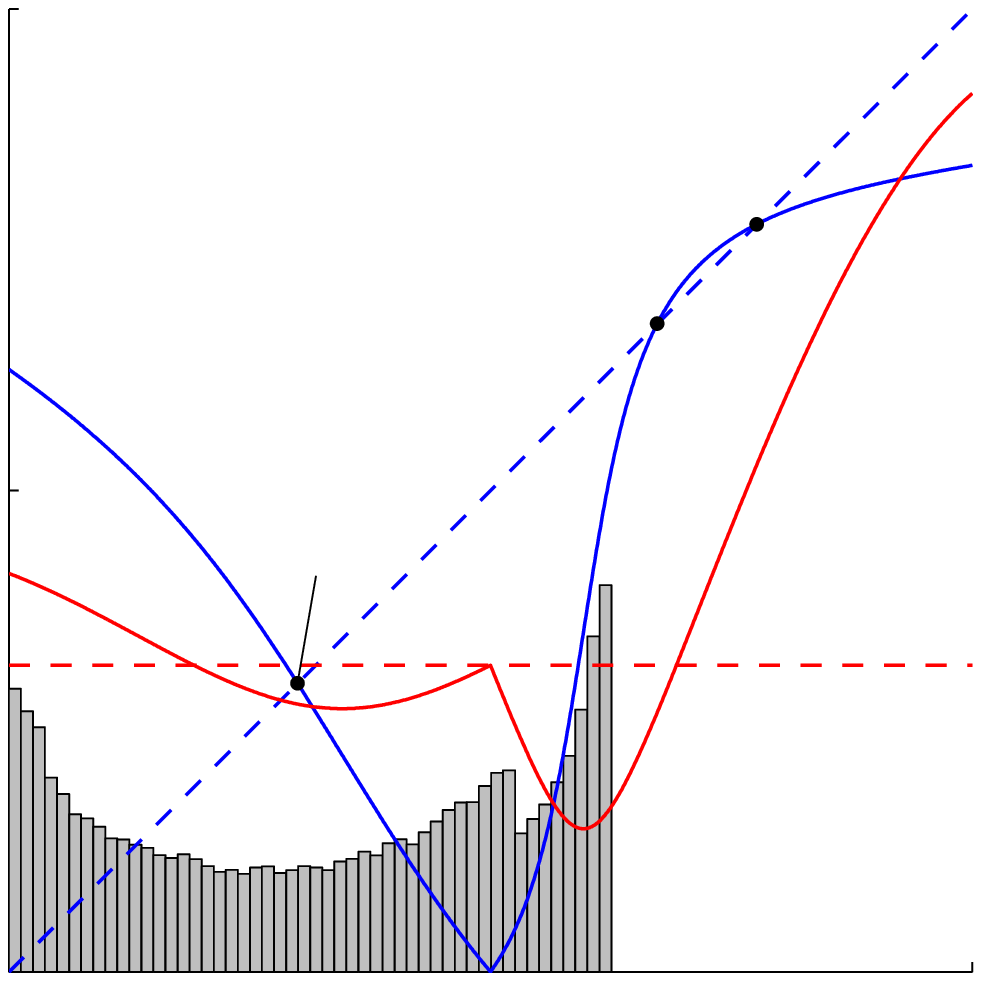}}
\put(8.5,.3){\includegraphics[width=7cm]{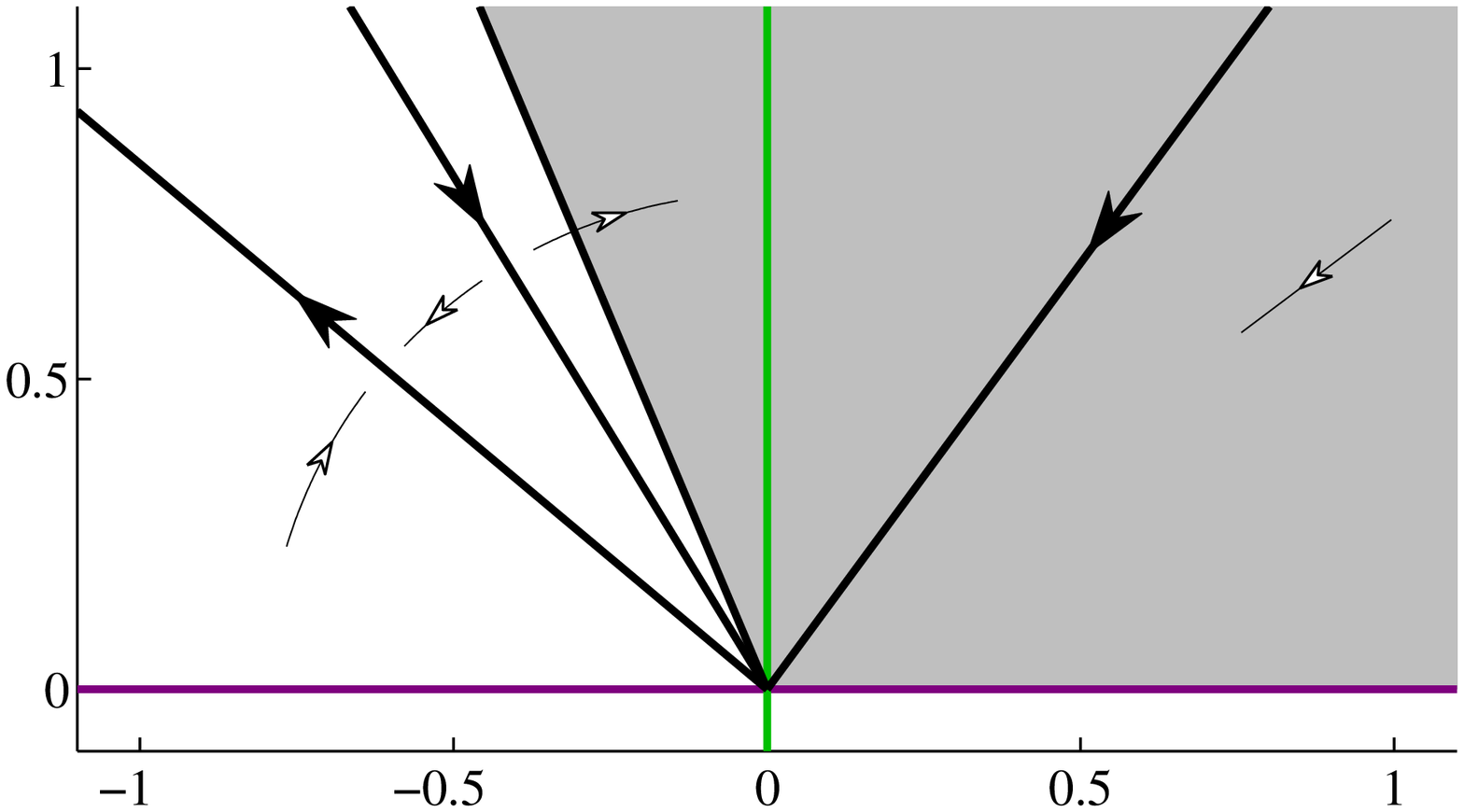}}
\put(1.14,7.85){\sf \bfseries A}
\put(9.41,4.4){\sf \bfseries B}
\put(5.4,.32){\small $\theta$}
\put(.73,.54){\scriptsize $0$}
\put(4.23,.54){\scriptsize $\frac{\pi}{2}$}
\put(7.73,.54){\scriptsize $\pi$}
\put(.58,.74){\scriptsize $0$}
\put(.52,4.26){\scriptsize $\frac{\pi}{2}$}
\put(.58,7.79){\scriptsize $\pi$}
\put(3,3.79){\scriptsize $\theta^R_+$}
\put(5.12,5.58){\scriptsize $\theta^L_-$}
\put(5.95,6.45){\scriptsize $\theta^L_+$}
\put(1.33,3.56){\scriptsize $D(\theta)$}
\put(1.33,4.88){\scriptsize $G(\theta)$}
\put(7,3.15){\tiny $D\!=\!1$}
\put(6.84,7.47){\tiny $G\!=\!\theta$}
\put(12.14,.26){\small $x$}
\put(8.6,3.25){\small $y$}
\put(14.83,1.18){\tiny $\theta\!=\!0$}
\put(12.32,4.06){\tiny $\theta\!=\!\frac{\pi}{2}$}
\put(11.09,4.06){\tiny $\theta\!=\!\theta_\Lambda$}
\put(9.23,1.18){\tiny $\theta\!=\!\pi$}
\put(14.5,3.07){\footnotesize $\Lambda$}
\put(14.46,3.88){\scriptsize $\gamma^R_+$}
\put(10.03,3.88){\scriptsize $\gamma^L_-$}
\put(9.42,2.9){\scriptsize $\gamma^L_+$}
\end{picture}
\caption{
Panel A shows $D$ and $G$, given by \eqref{eq:D2d} and \eqref{eq:G2d},
with $(\tau_L,\delta_L,\tau_R,\delta_R) = (2.5,1.4,-0.5,-1.2)$.
We also show the fixed points of $G$ and the probability density function of a
physical measure of $G$ (more precisely we show a histogram of the first
$10^5$ iterates of $\theta = 0$ under $G$).
Panel B illustrates the dynamics of $g$, given by \eqref{eq:g2d},
in the $(x,y)$-plane for the same combination of parameter values.
The solid arrows indicate the direction of forward iteration of $g$
on the invariant rays $\gamma^R_+$, $\gamma^L_-$, and $\gamma^L_+$.
The hollow arrows indicate the rough direction of other iterates of $g$.
Specifically, iterates are attracted to $\gamma^L_+$,
are repelled from $\gamma^L_-$,
and approach $\b0$ in $\Lambda$ \eqref{eq:Lambda}.
\label{fig:ppPolar1}
} 
\end{center}
\end{figure}

In view of Lemma \ref{le:fps}, in order to identify fixed points of $G$ we look at
the eigenvalues and eigenvectors of $A_L$ and $A_R$.
These are given by
\begin{equation}
\lambda^J_\pm = \frac{\tau_J}{2} \pm \sqrt{\frac{\tau_J^2}{4} - \delta_J} \;, \qquad
v^J_\pm = \begin{bmatrix} 1 \\ \lambda^J_\pm - \tau_J \end{bmatrix},
\label{eq:lambdavJpm}
\end{equation}
where $J = L,R$.
If $\lambda^J_\pm > 0$, then $v^J_\pm$ corresponds to the angle
\begin{equation}
\theta^J_\pm = \tan^{-1} \left( \lambda^J_\pm - \tau_J \right),
\label{eq:thetaJpm}
\end{equation}
and we let
\begin{equation}
\gamma^J_\pm = \left\{ (x,y) ~\middle|~ r \ge 0,\, \theta = \theta^J_\pm \right\},
\label{eq:gammaJpm}
\end{equation}
denote the corresponding invariant ray of $g$ described in Lemma \ref{le:ray}.

Next we provide two lemmas that characterise the dynamics of $G$ in
$\left( \frac{\pi}{2}, \pi \right)$ and $\left[ 0, \frac{\pi}{2} \right]$ respectively.
These results can be used to infer the dynamics of $g$ in the half-planes $x \le 0$ and $x \ge 0$.
The proofs, given in Appendix \ref{app:proofs}, are straight-forward because it suffices to
work with the linear maps $A_L \bx$ and $A_R \bx$.

\begin{lemma}
Suppose $\delta_L > 0$ for the map $G$ given by \eqref{eq:G2d}.
\begin{enumerate}
\item
If $\tau_L < 2 \sqrt{\delta_L}$,
then $G$ has no fixed points in $\left( \frac{\pi}{2}, \pi \right)$
and for all $\theta \in \left( \frac{\pi}{2}, \pi \right)$
there exists $i > 0$ such that $G^i(\theta) \in \left[ 0, \frac{\pi}{2} \right]$.
\item
If $\tau_L > 2 \sqrt{\delta_L}$,
then $G$ has two fixed points in $\left( \frac{\pi}{2}, \pi \right)$, namely $\theta^L_+$ and $\theta^L_-$.
Moreover, for all $\theta \in \left( \frac{\pi}{2}, \theta^L_- \right)$
there exists $i > 0$ such that $G^i(\theta) \in \left[ 0, \frac{\pi}{2} \right]$,
and for all $\theta \in \left( \theta^L_-, \pi \right)$
we have $G^n(\theta) \to \theta^L_+$ as $n \to \infty$.
\end{enumerate}
\label{le:GL}
\end{lemma}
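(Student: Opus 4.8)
The plan is to reduce both parts to the projective action of the single linear matrix $A_L$ on the circle of directions $\mathbb{RP}^1$, since on $\left[\frac{\pi}{2},\pi\right)$ the map $G$ in \eqref{eq:G2d} is exactly $\theta \mapsto$ (the angle in $[0,\pi)$ of $A_L(\cos\theta,\sin\theta)$), together with the eigenvalue data \eqref{eq:lambdavJpm}--\eqref{eq:thetaJpm} and Lemma \ref{le:fps}. First I would record the elementary facts about $A_L$: since $\det A_L = \delta_L > 0$, its eigenvalues are either complex (when $\tau_L^2 < 4\delta_L$) or real and of the same sign (when $\tau_L^2 \ge 4\delta_L$), with sum $\tau_L$; hence $A_L$ has a positive real eigenvalue if and only if $\tau_L > 2\sqrt{\delta_L}$, in which case there are two of them, $\lambda^L_+ > \lambda^L_- > 0$, and moreover $(0,1)$ is never an eigenvector of $A_L$. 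A fixed point $\bz^*$ of $G$ with $B^{\sf T}\bz^* < 0$ must come from the $A_L$ branch and so is an eigenvector of $A_L$ with positive eigenvalue; conversely a positive eigenvector of $A_L$, normalised to the half $B^{\sf T}\bz^* \le 0$, gives a fixed point lying \emph{strictly} inside $\left(\frac{\pi}{2},\pi\right)$ because it is not vertical. This already yields: no fixed point of $G$ in $\left(\frac{\pi}{2},\pi\right)$ in case (i), and exactly the two fixed points $\theta^L_\pm$ in case (ii); a short computation with \eqref{eq:thetaJpm} using $\lambda^L_\pm - \tau_L = -\lambda^L_\mp$ and $\lambda^L_- < \lambda^L_+ < \tau_L$ shows $\frac{\pi}{2} < \theta^L_- < \theta^L_+ < \pi$, with $\theta^L_+$ corresponding to the dominant eigenvalue.

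For case (i) I would split on the sign of $\tau_L$. If $\tau_L \le 0$, then for any $\theta \in \left(\frac{\pi}{2},\pi\right)$ the image $A_L(\cos\theta,\sin\theta) = (\tau_L\cos\theta+\sin\theta,\,-\delta_L\cos\theta)$ has both coordinates positive (since $\cos\theta < 0 < \sin\theta$ and $\delta_L > 0$), so $G(\theta) \in \left(0,\frac{\pi}{2}\right)$ and we are done with $i=1$. If $0 < \tau_L < 2\sqrt{\delta_L}$, then $A_L$ has complex eigenvalues, so its projective action on $\mathbb{RP}^1$ is an orientation-preserving (because $\det A_L > 0$) circle homeomorphism with no fixed point; such a map has no forward-invariant proper closed sub-arc (lift it to $\mathbb{R}$ and note the displacement is continuous, nowhere an integer, hence of constant nonzero integer part, so iterates of every point wind around). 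Since $\left[\frac{\pi}{2},\pi\right]$ is a proper closed sub-arc of $\mathbb{RP}^1$ and the $G$-orbit of $\theta \in \left(\frac{\pi}{2},\pi\right)$ coincides with its projective $A_L$-orbit for as long as it remains in $\left(\frac{\pi}{2},\pi\right)$, that orbit must leave $\left(\frac{\pi}{2},\pi\right)$ after finitely many steps, i.e.\ land in $\left[0,\frac{\pi}{2}\right]$.

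For case (ii) the projective action of $A_L$ on $\mathbb{RP}^1$ has fixed points $\theta^L_\pm$ and is the standard attract--repel (north--south) dynamics: the dominant eigendirection $\theta^L_+$ is attracting, with multiplier $\lambda^L_-/\lambda^L_+ \in (0,1)$, and $\theta^L_-$ is repelling. These two points cut $\mathbb{RP}^1$ into two arcs; the sub-intervals $\left(\theta^L_-,\theta^L_+\right)$ and $\left(\theta^L_+,\pi\right)$ both lie in $\left(\frac{\pi}{2},\pi\right)$ and are mapped into themselves by the homeomorphism $G$, and each lies in the basin of $\theta^L_+$, giving $G^n(\theta) \to \theta^L_+$ for all $\theta \in \left(\theta^L_-,\pi\right)$. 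For $\theta \in \left(\frac{\pi}{2},\theta^L_-\right)$ the point lies on the repelling side of $\theta^L_-$ along the other arc, so $G$ strictly decreases $\theta$ as long as $G(\theta)$ stays in $\left(\frac{\pi}{2},\pi\right)$; since there is no fixed point of $G$ in $\left(\frac{\pi}{2},\theta^L_-\right)$ and $G$ is continuous on $\left[\frac{\pi}{2},\pi\right)$ with $G\!\left(\frac{\pi}{2}\right)=0 \notin \left(\frac{\pi}{2},\pi\right)$, a strictly decreasing orbit cannot converge within $\left(\frac{\pi}{2},\pi\right)$, so it must exit into $\left[0,\frac{\pi}{2}\right]$ after finitely many steps.

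The step I expect to be the main obstacle is the bookkeeping connecting the $G$-dynamics, which lives on the half-open interval $[0,\pi)$ with seams at the switching angle $\frac{\pi}{2}$ and at the point ``$\pi$'', to the clean projective $A_L$-dynamics on the circle $\mathbb{RP}^1$: in particular, justifying carefully that ``the $G$-orbit equals the projective $A_L$-orbit until it first leaves $\left(\frac{\pi}{2},\pi\right)$'' and ruling out an orbit that asymptotically approaches the endpoint $\frac{\pi}{2}$ (or ``$\pi$'') without ever entering $\left[0,\frac{\pi}{2}\right]$. Everything else is routine planar linear algebra of a $2\times 2$ matrix acting on directions.
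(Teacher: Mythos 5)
Your overall strategy, reducing everything to the eigenstructure and projective action of $A_L$, is exactly the route the paper takes, and most of your argument is sound: the direct computation for $\tau_L \le 0$ (which in fact treats the real-negative-eigenvalue sub-case $\tau_L \le -2\sqrt{\delta_L}$ more explicitly than the paper does) and your part (ii), including the escape argument on $\left( \frac{\pi}{2}, \theta^L_- \right)$, are correct.

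The genuine gap is in part (i) for $0 < \tau_L < 2\sqrt{\delta_L}$. The principle you lean on there, namely that an orientation-preserving circle homeomorphism with no fixed point admits no orbit remaining forever in a proper closed sub-arc because the lifted orbit ``winds around'', is false in general: take a fixed-point-free homeomorphism with rotation number $\frac{1}{2}$ and a period-two orbit; any proper closed arc containing both periodic points contains that orbit for all time, even though the lift drifts to infinity (the orbit simply steps from one fundamental-domain translate of the arc to the next). So non-invariance of $\left[ \frac{\pi}{2}, \pi \right]$ plus winding does not by itself force your orbit into $\left[ 0, \frac{\pi}{2} \right]$; you need input specific to this map and this arc. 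The cheapest repair is the argument you already use in part (ii): by Lemma \ref{le:fps}, when $A_L$ has no positive eigenvalue $G$ has no fixed point in $\left[ \frac{\pi}{2}, \pi \right)$, and $G\!\left( \frac{\pi}{2} \right) = 0 < \frac{\pi}{2}$, so the intermediate value theorem gives $G(\theta) < \theta$ on all of $\left[ \frac{\pi}{2}, \pi \right)$; hence the orbit is strictly decreasing while it remains in $\left( \frac{\pi}{2}, \pi \right)$, it cannot converge to a limit there (the limit would be a fixed point, and $\frac{\pi}{2}$ is not fixed since $G\!\left( \frac{\pi}{2} \right) = 0$), and since $G$ maps $\left( \frac{\pi}{2}, \pi \right)$ into $(0,\pi)$ the first exit lands in $\left( 0, \frac{\pi}{2} \right]$. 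Alternatively you can make the winding argument quantitative: on the arc the image angle lies in $(0,\pi)$, so the continuous lift displacement equals $G(\theta) - \theta$ and an image that stays in the arc keeps the lift in the same translate, contradicting the uniform drift of the lift of a fixed-point-free map. Either way, this monotonicity/IVT step is the missing ingredient; it is also what the paper's brief ``iterates rotate clockwise'' remark is silently using.
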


\begin{lemma}
Suppose $\delta_R < 0$ for the map $G$ given by \eqref{eq:G2d}.
On $\left[ 0, \frac{\pi}{2} \right]$, $G$ has the unique fixed point $\theta^R_+$.
\begin{enumerate}
\item
If $\tau_R > 0$, then $\theta^R_+$ is a globally attracting fixed point of $G$
on the forward invariant interval $\left[ 0, \frac{\pi}{2} \right]$.
\item
If $\tau_R < 0$, then $\theta^R_+$ is a repelling fixed point of $G$.
\end{enumerate}
\label{le:GR}
\end{lemma}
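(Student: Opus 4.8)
The plan is to use that on the closed first quadrant of the $(x,y)$-plane — that is, for $\theta\in\left[0,\frac{\pi}{2}\right]$ — the map $g$ in \eqref{eq:g2d} acts as the single linear map $A_R$, so that $G$ restricted to $\left[0,\frac{\pi}{2}\right]$ is exactly the map induced by $A_R$ on directions, and the whole lemma reduces to the eigenstructure of $A_R$. First I would read this eigenstructure off \eqref{eq:lambdavJpm}: since $\delta_R<0$ we have $\sqrt{\tfrac{\tau_R^2}{4}-\delta_R}>\tfrac{|\tau_R|}{2}$, hence $\lambda^R_+>0>\lambda^R_-$ irrespective of the sign of $\tau_R$; moreover $\lambda^R_+-\tau_R=-\lambda^R_->0$ and $\lambda^R_--\tau_R=-\lambda^R_+<0$, so $v^R_+$ points into the open first quadrant (at angle $\theta^R_+\in\left(0,\frac{\pi}{2}\right)$) while $v^R_-$ points into the open fourth quadrant, so the line $\mathbb{R}v^R_-$ meets the closed first quadrant only at $\b0$. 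Because a fixed point of $G$ lying in $\left[0,\frac{\pi}{2}\right]$ is, by Lemma~\ref{le:fps}, a positive eigenvector of $A_R$ in that sector and $\lambda^R_+$ is the only positive eigenvalue of $A_R$, it follows that $\theta^R_+$ is the unique fixed point of $G$ in $\left[0,\frac{\pi}{2}\right]$. I would also record $\det A_R=\delta_R=\lambda^R_+\lambda^R_-$ and $\tau_R+\tan\theta^R_+=\lambda^R_+$ for use below.

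For part (i), with $\tau_R>0$: if $x,y\ge0$ then $A_R(x,y)=(\tau_Rx+y,-\delta_Rx)$ has both components $\ge0$, so $\left[0,\frac{\pi}{2}\right]$ is forward invariant under $G$ and $G^n(\theta)$ equals the angle of $A_R^n(\cos\theta,\sin\theta)$. Writing $(\cos\theta,\sin\theta)=c_+v^R_++c_-v^R_-$ we have $c_+\ne0$ (the line $\mathbb{R}v^R_-$ misses the closed first quadrant except at $\b0$), and $\lambda^R_++\lambda^R_-=\tau_R>0$ with $\lambda^R_+>0>\lambda^R_-$ forces $\lambda^R_+>|\lambda^R_-|$, so $(\lambda^R_+)^{-n}A_R^n(\cos\theta,\sin\theta)=c_+v^R_++c_-\left(\lambda^R_-/\lambda^R_+\right)^nv^R_-\to c_+v^R_+\ne\b0$. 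Hence $G^n(\theta)$ converges, and since every $G^n(\theta)\in\left[0,\frac{\pi}{2}\right]$ while the limiting direction lies on the line $\mathbb{R}v^R_+$, the limit can only be $\theta^R_+$. This gives global attraction on the forward-invariant interval $\left[0,\frac{\pi}{2}\right]$.

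For part (ii), with $\tau_R<0$, I would argue locally. The formula $G(\theta)=\tan^{-1}\!\left(\frac{-\delta_R}{\tau_R+\tan\theta}\right)$ is smooth near $\theta^R_+$ since $\tau_R+\tan\theta^R_+=\lambda^R_+\ne0$, and a short differentiation gives $G'(\theta)=\dfrac{\delta_R\sec^2\theta}{(\tau_R+\tan\theta)^2+\delta_R^2}$. Substituting $\tau_R+\tan\theta^R_+=\lambda^R_+$, $\sec^2\theta^R_+=1+(\lambda^R_+-\tau_R)^2=1+(\lambda^R_-)^2$, and $\delta_R^2=(\lambda^R_+)^2(\lambda^R_-)^2$ collapses this to $G'(\theta^R_+)=\lambda^R_-/\lambda^R_+$. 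Since $\tau_R<0$ forces $|\lambda^R_-|>\lambda^R_+$, we get $\left|G'(\theta^R_+)\right|>1$, so $\theta^R_+$ is a repelling fixed point of $G$. (One can instead mirror the argument of part (i): an orbit that remained in a small neighbourhood $U\subset\left(0,\frac{\pi}{2}\right)$ of $\theta^R_+$ would stay in the $A_R$-region for all time, and since now $|\lambda^R_-|>\lambda^R_+$ its direction would be driven toward $\pm v^R_-\notin\overline{U}$, a contradiction.)

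I expect the only real care to be needed in part (i), in verifying forward invariance of the quadrant and the non-vanishing of the dominant eigencomponent $c_+$ — precisely the places where the hypotheses \eqref{eq:deltaLR} and $\tau_R>0$ enter. Neither is a genuine obstacle, which matches the remark preceding the lemma that the argument is straightforward because one works directly with the linear maps $A_L\bx$ and $A_R\bx$.
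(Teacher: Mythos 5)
Your proposal is correct and follows essentially the same route as the paper: uniqueness via Lemma~\ref{le:fps} and the fact that $\lambda^R_+$ is the only positive eigenvalue of $A_R$, and attraction/repulsion of $\theta^R_+$ from the dominance relation between $\lambda^R_+$ and $|\lambda^R_-|$ determined by the sign of $\tau_R$. The only cosmetic difference is that for part (ii) you compute $G'(\theta^R_+)=\lambda^R_-/\lambda^R_+$ explicitly rather than invoking the relative strength of the eigenspaces of $A_R$, which is an equivalent (and slightly more detailed) way of drawing the same conclusion.
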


Let
\begin{equation}
\theta_\Lambda = G(0) = \tan^{-1} \!\left( \frac{-\delta_R}{\tau_R} \right),
\label{eq:thetaLambda}
\end{equation}
and
\begin{equation}
\Lambda = \left\{ (x,y) ~\middle|~ r \ge 0,\, 0 \le \theta \le \theta_\Lambda \right\}.
\label{eq:Lambda}
\end{equation}
The sector $\Lambda$ is the image of the closure of the first quadrant of the $(x,y)$-plane under $g$.
The following result is proved in Appendix \ref{app:proofs}.

\begin{lemma}
Suppose $\delta_L > 0$ and $\delta_R < 0$ for the map $g$ given by \eqref{eq:g2d}.
Then $\Lambda$ is a forward invariant set of $g$ unless
$\tau_R < 0$, $\tau_L > 2 \sqrt{\delta_L}$, and
$\theta^L_- < \theta_\Lambda < \theta^L_+$.
Moreover, if $\tau < 2 \sqrt{\delta_L}$ then there exists $M \in \mathbb{Z}$ such that
$g^M(x,y) \in \Lambda$ for all $(x,y) \in \mathbb{R}^2$.
\label{le:Lambda}
\end{lemma}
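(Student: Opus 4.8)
The plan is to work throughout in the polar coordinates \eqref{eq:polar}, writing $S_\beta=\{(x,y)\mid r\ge 0,\ 0\le\theta\le\beta\}$ for $\beta\in[0,\pi]$, so that $\Lambda=S_{\theta_\Lambda}$ and $S_{\pi/2}$ is the closure of the first quadrant. I would first record three facts. By \eqref{eq:g2d} every image point has nonnegative $y$-coordinate, so $g(\mathbb{R}^2)\subseteq H:=\{y\ge 0\}=S_\pi$. The set $\Lambda$ equals $g(S_{\pi/2})$ by definition, so $G$ maps $[0,\tfrac\pi2]$ into $[0,\theta_\Lambda]$. And on $\{x\ge 0\}$ and on $\{x\le 0\}$ the map $g$ agrees with the linear maps $A_R$ and $A_L$, which are linearly homogeneous, so $g$ carries rays through the origin to rays according to $G$, given by \eqref{eq:G2d}; since $\det A_L=\delta_L>0$, $A_L$ preserves orientation and $G$ is continuous and strictly increasing on $[\pi/2,\pi]$ once we set $G(\pi):=\lim_{\theta\to\pi^-}G(\theta)\in[0,\pi)$, with $G(\pi/2)=0$.

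For forward invariance I would split on the sign of $\tau_R$. If $\tau_R\ge 0$ then $\theta_\Lambda\le\tfrac\pi2$ by \eqref{eq:thetaLambda}, so $\Lambda=S_{\theta_\Lambda}\subseteq S_{\pi/2}$ and $g(\Lambda)\subseteq g(S_{\pi/2})=\Lambda$. If $\tau_R<0$ then $\theta_\Lambda\in(\tfrac\pi2,\pi)$; splitting $\Lambda$ into its intersections with $\{x\ge 0\}$ and $\{x\le 0\}$, the first lies in $S_{\pi/2}$ and maps into $\Lambda$, while the second is the sector of angles $[\tfrac\pi2,\theta_\Lambda]$, which $g=A_L$ sends to $S_{G(\theta_\Lambda)}$ (by homogeneity, $G$ increasing, $G(\tfrac\pi2)=0$). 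So $\Lambda$ is forward invariant iff $G(\theta_\Lambda)\le\theta_\Lambda$, and I would settle this with Lemma~\ref{le:GL}. If $\tau_L<2\sqrt{\delta_L}$, $G$ has no fixed point in $(\tfrac\pi2,\pi)$ and $G(\tfrac\pi2)=0<\tfrac\pi2$, so the intermediate value theorem forces $G(\theta)<\theta$ on all of $(\tfrac\pi2,\pi)$ --- hence $\Lambda$ is forward invariant. If $\tau_L>2\sqrt{\delta_L}$, write $\theta^L_-<\theta^L_+$ for the two fixed points of $G$ in $(\tfrac\pi2,\pi)$; as before $G(\theta)-\theta<0$ on $[\tfrac\pi2,\theta^L_-)$, while Lemma~\ref{le:GL}(ii) (orbits in $(\theta^L_-,\theta^L_+)$ converge to $\theta^L_+$) together with monotonicity of $G$ forces $G(\theta)-\theta>0$ on $(\theta^L_-,\theta^L_+)$ and then $G(\theta)-\theta<0$ on $(\theta^L_+,\pi)$. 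Thus $G(\theta_\Lambda)\le\theta_\Lambda$, so $\Lambda$ is forward invariant, except precisely when $\theta^L_-<\theta_\Lambda<\theta^L_+$ --- exactly the exceptional case in the statement.

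For the ``moreover'' I would assume $\tau_L<2\sqrt{\delta_L}$, so we are never in the exceptional case, $\Lambda$ is forward invariant, and $G(\theta)<\theta$ on $(\tfrac\pi2,\pi)$ as shown. Since $g$ maps sectors to sectors --- $g(S_\beta)\subseteq g(S_{\pi/2})=S_{\theta_\Lambda}$ when $\beta\le\tfrac\pi2$, and when $\beta\in(\tfrac\pi2,\pi]$ the first-quadrant part of $S_\beta$ maps into $\Lambda$ and the second-quadrant sector $[\tfrac\pi2,\beta]$ maps to $S_{G(\beta)}$, so $g(S_\beta)\subseteq S_{\max(\theta_\Lambda,G(\beta))}$ in all cases --- an induction gives $g^k(H)\subseteq S_{\beta_k}$, where $\beta_0=\pi$ and $\beta_{k+1}=\max(\theta_\Lambda,G(\beta_k))$. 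It remains to see that the scalar sequence $\{\beta_k\}$ reaches $\theta_\Lambda$. Since $\beta_k\ge\theta_\Lambda$ always, if it never did then $\beta_k>\theta_\Lambda$ for all $k$; then an easy induction shows the maximum is never binding, so $\beta_k=G^k(\pi)$, and $G^k(\pi)>\tfrac\pi2$ for all $k$ (otherwise $\beta_{k+1}=\theta_\Lambda$, as $G$ maps $[0,\tfrac\pi2]$ into $[0,\theta_\Lambda]$); hence $\{G^k(\pi)\}_{k\ge1}\subseteq(\tfrac\pi2,\pi)$ strictly decreases, and its limit is a fixed point of $G$ in $[\tfrac\pi2,\pi)$ --- contradicting Lemma~\ref{le:GL}(i) and $G(\tfrac\pi2)=0$. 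So $\beta_M=\theta_\Lambda$ for some $M$, whence $g^M(H)\subseteq\Lambda$; and since $g(\mathbb{R}^2)\subseteq H$, $g^{M+1}(x,y)\in\Lambda$ for all $(x,y)\in\mathbb{R}^2$.

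The main obstacle I anticipate is fixing the sign of $G(\theta)-\theta$ on $(\theta^L_-,\theta^L_+)$ in the case $\tau_L>2\sqrt{\delta_L}$: monotonicity of $G$ is consistent with either sign, so one must feed in the qualitative convergence statement of Lemma~\ref{le:GL}(ii) to conclude it is positive there --- and that sign is precisely what produces the exceptional region. The remaining subtlety, uniformity of $M$ in the ``moreover'', takes care of itself because the dominating sequence $\{\beta_k\}$ is a single scalar recursion independent of the initial point, so its finite convergence to $\theta_\Lambda$ yields one $M$ valid for all of $\mathbb{R}^2$.
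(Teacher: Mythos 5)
Your argument is correct, and it reaches the same structural conclusion as the paper --- forward invariance is equivalent to $G(\theta_\Lambda)\le\theta_\Lambda$, and this fails exactly for $\tau_R<0$, $\tau_L>2\sqrt{\delta_L}$, $\theta^L_-<\theta_\Lambda<\theta^L_+$ --- but by a somewhat different route. For the invariance part the paper works directly with the explicit derivative \eqref{eq:dGdtheta} to get $G$ decreasing on $\left[0,\frac{\pi}{2}\right)$ and increasing on $\left(\frac{\pi}{2},\pi\right)$, and then locates the minimum and maximum of $G$ over $[0,\theta_\Lambda]$; you instead lean on the image characterisation of $\Lambda$ and on orientation preservation of $A_L$ for monotonicity, and you pin down the sign of $G(\theta)-\theta$ on $\left(\theta^L_-,\theta^L_+\right)$ by feeding in the convergence statement of Lemma~\ref{le:GL}(ii), where the paper simply asserts that $G(\theta_\Lambda)>\theta_\Lambda$ forces $\theta_\Lambda$ to lie between the two fixed points. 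Two small points are worth tightening: the fact that $G$ maps $\left[0,\frac{\pi}{2}\right]$ into $[0,\theta_\Lambda]$ (equivalently $g(S_{\pi/2})=\Lambda$) is not purely definitional and deserves a line --- it follows from $\delta_R<0$ exactly as in \eqref{eq:dGdtheta}, or from your orientation argument applied to $A_R$ --- and the sign claim on $\left(\theta^L_+,\pi\right)$ can be closed in one sentence (if $G>\mathrm{id}$ there, orbits are nondecreasing and cannot converge down to $\theta^L_+$, contradicting Lemma~\ref{le:GL}(ii)). For the ``moreover'' part your argument is genuinely different: the paper invokes the clockwise rotation of $A_L$ (complex eigenvalues) to get a uniform escape time from the left half-plane, while you dominate $g^k$ of the upper half-plane by the scalar recursion $\beta_{k+1}=\max\!\left(\theta_\Lambda,G(\beta_k)\right)$ with $\beta_0=\pi$ and rule out non-termination because a monotone limit would be a fixed point of $G$ in $\left[\frac{\pi}{2},\pi\right)$, which Lemma~\ref{le:GL}(i) forbids. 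This buys a cleaner treatment of the uniformity of $M$ (a single scalar sequence independent of the initial point) and covers the real-negative-eigenvalue range $\tau_L\le-2\sqrt{\delta_L}$ without appealing to the rotation picture, at the cost of having to extend $G$ continuously to $\theta=\pi$ to account for the negative $x$-axis, which you do correctly.
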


If $\tau_R > 0$ then $\Lambda$ is forward invariant by Lemma \ref{le:Lambda}.
Also $\Lambda$ is contained in the half-plane $x \ge 0$ and so by Lemma \ref{le:GR}
iterates of $g$ in $\Lambda$ simply approach the ray $\gamma^R_+$
and thus tend to $\b0$ if $\lambda^R_+ < 1$, and diverge if $\lambda^R_+ > 1$.
If $\tau_R < 0$ then $\Lambda$ involves points on both sides of the switching manifold
and so the dynamics of $g$ within $\Lambda$ can be complicated (indeed $G$ may be chaotic on $[0,\theta_\Lambda]$).

We complete this section by providing necessary and sufficient conditions for $\b0$ to be an asymptotically stable
fixed point of $g$ in the case $\tau_L < 2 \sqrt{\delta_L}$ (which includes the interesting case $\tau_R < 0$).
This was obtained by taking a result of Gardini \cite{Ga92}, for a piecewise-linear macro-economic market model,
and adapting it to the normal form \eqref{eq:g2d}.

\begin{figure}[b!]
\begin{center}
\setlength{\unitlength}{1cm}
\begin{picture}(7,4)
\put(0,0){\includegraphics[width=7cm]{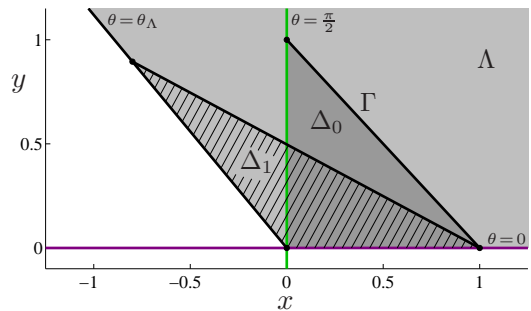}}
\put(3.64,0){\small $x$}
\put(.1,2.95){\small $y$}
\put(6.3,3.2){\footnotesize $\Lambda$}
\put(4.74,2.64){\footnotesize $\Gamma$}
\put(4.07,2.39){\footnotesize $\Delta_0$}
\put(3.145,1.86){\footnotesize $\Delta_1$}
\put(6.43,.87){\tiny $\theta\!=\!0$}
\put(3.82,3.78){\tiny $\theta\!=\!\frac{\pi}{2}$}
\put(1.38,3.78){\tiny $\theta\!=\!\theta_\Lambda$}
\end{picture}
\caption{
A sketch of the triangle $\Delta_0$ \eqref{eq:Delta0} and its image $\Delta_1$ under $g$.
\label{fig:DeltaSchem}
} 
\end{center}
\end{figure}

To state the result we first define some geometric structures.
Let $\Gamma$ be the line segment connecting $(1,0)$ and $(0,1)$, i.e.
\begin{equation}
\Gamma = \left\{ (x,1-x) ~\middle|~ 0 \le x \le 1 \right\}.
\label{eq:Gamma}
\end{equation}
Let $\Delta_0$ be the filled triangle with vertices $(0,0)$, $(1,0)$ and $(0,1)$, i.e.
\begin{equation}
\Delta_0 = \left\{ (x,y) ~\middle|~ x \ge 0,\, y \ge 0,\, y \le 1-x \right\}.
\label{eq:Delta0}
\end{equation}
Let
\begin{equation}
\Delta_n = g^n \!\left( \Delta_0 \right), \quad {\rm for~all~} n \ge 1 \;,
\label{eq:Deltai}
\end{equation}
and
\begin{equation}
\Omega_n = \bigcup_{i=0}^n \Delta_i \;, \quad {\rm for~all~} n \ge 0 \;,
\label{eq:Omegaj}
\end{equation}
see Fig.~\ref{fig:DeltaSchem}.

\begin{theorem}
Suppose $\delta_L > 0$, $\delta_R < 0$ and $\tau_L < 2 \sqrt{\delta_L}$
for the map $g$ given by \eqref{eq:g2d}.
Then $\b0$ is an asymptotically stable fixed point of $g$ if and only if
there exist $m,k \in \mathbb{Z}$ such that
$g(\Omega_m) \subset \Omega_m$ and $g^k(\Omega_m) \cap \Gamma = \varnothing$.
\label{th:Ga92}
\end{theorem}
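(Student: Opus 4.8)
Here is my plan for proving Theorem~\ref{th:Ga92}.

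The plan is to recast asymptotic stability as a statement about the single absorbing set $\Omega_m$ and then study the decreasing sequence of its forward images. Throughout I use the following facts, all available under the hypotheses. Because $\tau_L<2\sqrt{\delta_L}$, Lemma~\ref{le:Lambda} makes $\Lambda$ forward invariant and gives $M$ with $g^M(\mathbb{R}^2)\subseteq\Lambda$. Since each $g^n$ is linearly homogeneous it carries a radial segment $[\b0,\bx]$ onto $[\b0,g^n(\bx)]$, so $\Delta_i$, $\Omega_m$, and each $g^n(\Omega_m)$ are compact and star-shaped about $\b0$; for such a set $S$ write $\rho_S(\theta)$ for the radius to which $S$ reaches along the ray of angle $\theta$, and let $\hat\theta$ denote the unit vector of angle $\theta$. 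Also $A_R$ is a linear isomorphism carrying the closed first quadrant onto $\Lambda$, and $\Delta_0\supseteq\overline B_{1/\sqrt2}\cap\{x\ge0,\,y\ge0\}$, so there is $\epsilon>0$ with $\Lambda\cap\overline B_\epsilon\subseteq\Delta_1\subseteq\Omega_m$; composing with $g^M$ and using homogeneity yields a constant $c_0$ with $g^M(\overline B_1)\subseteq c_0\,\Omega_m$. Finally $\rho_\Gamma(\theta)=1/(\cos\theta+\sin\theta)\in[\tfrac1{\sqrt2},1]$ for $\theta\in[0,\tfrac\pi2]$, so any set contained in the open ball $B_{1/\sqrt2}$ is disjoint from $\Gamma$. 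By Lemma~\ref{le:clh} and Theorem~\ref{th:hots} (with $f=g$), $\b0$ is asymptotically stable for $g$ precisely when $g^N(\overline B_1)\subseteq B_1$ for some $N$ (the nontrivial direction uses compactness and homogeneity to iterate $g^N$).

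\emph{Necessity.} If $\b0$ is asymptotically stable it is exponentially stable, so $s_n:=\sup_{\bz\in\mathbb S^1}\|g^n(\bz)\|\to0$ and $\Delta_i\subseteq\overline B_{s_i}$. Picking $m$ with $s_{m+1}\le\epsilon$ gives $\Delta_{m+1}\subseteq\Lambda\cap\overline B_\epsilon\subseteq\Delta_1\subseteq\Omega_m$, hence $g(\Omega_m)=\bigcup_{i=1}^{m+1}\Delta_i\subseteq\Omega_m$; picking $k$ with $s_i<\tfrac1{\sqrt2}$ for all $i\ge k$ gives $g^k(\Omega_m)=\bigcup_{i=k}^{m+k}\Delta_i\subseteq B_{1/\sqrt2}$, which misses $\Gamma$.

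\emph{Sufficiency.} Assume $g(\Omega_m)\subseteq\Omega_m$ and $g^k(\Omega_m)\cap\Gamma=\varnothing$. Then the compact sets $g^n(\Omega_m)$ decrease to $K:=\bigcap_{n}g^n(\Omega_m)$, which is nonempty, compact, star-shaped about $\b0$, satisfies $g(K)=K$, and is the Hausdorff limit of $g^n(\Omega_m)$; also $K\subseteq g(\Omega_m)\subseteq\Lambda$ and $K\subseteq g^k(\Omega_m)$, so $K\cap\Gamma=\varnothing$. If $K=\{\b0\}$ then $g^n(\Omega_m)\to\{\b0\}$, whence $g^{M+n}(\overline B_1)\subseteq c_0\,g^n(\Omega_m)$, which is eventually in $B_1$, producing the required $N$; so it remains to prove $K=\{\b0\}$. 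Disjointness from $\Gamma$ forces $\rho_K(\theta)<\rho_\Gamma(\theta)$ for $\theta\in[0,\tfrac\pi2]$. Applying $g$ to the outermost point $\rho_K(\theta)\hat\theta\in K$ and using $g(K)\subseteq K$ gives the radial inequality $\rho_K(G(\theta))\ge D(\theta)\rho_K(\theta)$, so $\rho_K(G^n(\theta))\ge\rho_K(\theta)\,\|g^n(\hat\theta)\|$ and every direction occupied by $K$ has bounded radial growth under $g$. If $\tau_R\ge0$ then $\Lambda\subseteq\{x\ge0\}$, $g$ acts on $K$ by the linear map $A_R$, and the set $\Theta_K$ of directions of $K$ satisfies $G(\Theta_K)=\Theta_K$ and $\Theta_K\subseteq[0,\tfrac\pi2]$, hence $\Theta_K\subseteq\bigcap_nG^n([0,\tfrac\pi2])=\{\theta^R_+\}$ by the global attraction in Lemma~\ref{le:GR}(i); thus $K$ is a subsegment of $\gamma^R_+$, so $g(K)=K$ forces $\lambda^R_+=1$ unless $K=\{\b0\}$, and $\lambda^R_+=1$ is impossible since then $g$ fixes $\Delta_0\cap\gamma^R_+$ pointwise, giving $K\supseteq\bigcap_n\Delta_n\supseteq\Delta_0\cap\gamma^R_+$, which meets $\Gamma$. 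If $\tau_R<0$, the directions of $K$ are not so restricted, but by Lemma~\ref{le:GL}(i) every direction of $K$ eventually enters $[0,\tfrac\pi2]$, where $\rho_K<\rho_\Gamma\le1$; combining this with the radial inequality and Gardini's bookkeeping for the successive images of the cross-section $\Gamma$ inside $\Omega_m$ shows the cumulative radial gain between consecutive such entries is $<1$, forcing $\rho_K\equiv0$.

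The main obstacle is this last step for $\tau_R<0$: there $G$ on $[0,\theta_\Lambda]$ may be chaotic, $[0,\tfrac\pi2]$ is not forward invariant, and $A_L$ has no positive eigenvalue, so $K$'s directions cannot be identified and the contradiction must be extracted from a quantitative first-return analysis of $\Gamma$ relative to $\Omega_m$; this is precisely Gardini's argument, which I would transcribe into the coordinates of \eqref{eq:g2d}. Everything else---the reduction, the preliminaries, necessity, and the case $\tau_R\ge0$---is routine given Lemmas~\ref{le:clh},~\ref{le:Lambda},~\ref{le:GR},~\ref{le:GL} and Theorem~\ref{th:hots}.
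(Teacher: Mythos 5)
Your ``necessity'' argument is the paper's own argument in only slightly different clothing (uniform convergence of $g^n(\Delta_0)$ to $\b0$ gives $\Delta_{m+1}\subset\Delta_1\subset\Omega_m$ and eventually puts all $\Delta_i$ inside a ball missing $\Gamma$), and your reduction of sufficiency to showing $K:=\bigcap_n g^n(\Omega_m)=\{\b0\}$ is sound. The genuine gap is in the sufficiency direction for $\tau_R<0$: this is precisely the interesting case (when $\tau_R<0$ the sector $\Lambda$ straddles the switching manifold and $G$ may be chaotic on $[0,\theta_\Lambda]$), and you do not prove it --- you assert that ``the cumulative radial gain between consecutive entries to $[0,\tfrac{\pi}{2}]$ is $<1$'' and defer to ``Gardini's bookkeeping''. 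But extracting that contraction from the hypothesis $g^k(\Omega_m)\cap\Gamma=\varnothing$ is exactly the content of the theorem; no mechanism is given for it, so the heart of the proof is missing. (Your $\tau_R\ge 0$ subcase also leans on $\bigcap_n G^n\!\left(\left[0,\tfrac{\pi}{2}\right]\right)=\{\theta^R_+\}$, which needs a word about uniformity of the attraction, but that is minor.)

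The paper closes this uniformly, with no dichotomy on $\tau_R$ and no analysis of $G$ at all, using an ingredient you already have: star-shapedness. Each $g^k(\Omega_m)$ is compact and star-shaped about $\b0$, so disjointness from $\Gamma$ forces $g^k(\Omega_m)\cap Q_1\subset\alpha\Delta_0\subset\alpha\Omega_m$ for some $\alpha<1$, where $Q_1=\{x\ge 0,\,y\ge 0\}$ (a ray that escaped past $\Gamma$ would have to cross it). By a simple extension of Lemma~\ref{le:Lambda} (using $\tau_L<2\sqrt{\delta_L}$), every point reaches $\Lambda\cap Q_1$ within $M$ iterates; since $g(\Omega_m)\subset\Omega_m$, an orbit starting in $g^k(\Omega_m)$ stays in $g^k(\Omega_m)$, so on entering $Q_1$ it lies in $\alpha\Omega_m$, which is forward invariant by linear homogeneity. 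Hence $g^{k+M}(\Omega_m)\subset\alpha\Omega_m$, and iterating gives $g^{j(k+M)}(\Omega_m)\subset\alpha^j\Omega_m\to\{\b0\}$, i.e.\ your $K=\{\b0\}$, after which your own reduction (or Lemma~\ref{le:clh}(i) directly) finishes the proof. So the missing step is not a transcription of an external argument but a short quantitative use of $\Gamma$-disjointness through the scaling factor $\alpha$; without it your proposal does not establish the ``if'' direction.
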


Theorem \ref{th:Ga92} is proved in Appendix \ref{app:proofs} by analysing the dynamics within $\Lambda$.
The condition $\tau_L < 2 \sqrt{\delta_L}$ ensures that $\Lambda$ is globally attracting (Lemma \ref{le:Lambda}).
If $\tau_L \ge 2 \sqrt{\delta_L}$ and $\Lambda$ is forward invariant,
then Theorem \ref{th:Ga92} can be shown to be true
if we only consider orbits in $\Lambda$\removableFootnote{
We do not state this formally as proving it requires generalising
Lemma \ref{le:clh} to invariant sectors which I feel is distracting,
and because we wouldn't end up using it anyway.
In discussing measure stability it is most useful to know that $\Lambda$ is simply forward invariant.
}.

\section{Numerical results for two-dimensional non-invertible maps}
\label{sec:numerics}

\begin{figure}[b!]
\begin{center}
\setlength{\unitlength}{1cm}
\begin{picture}(15.6,7.8)
\put(0,0){\includegraphics[width=15.6cm]{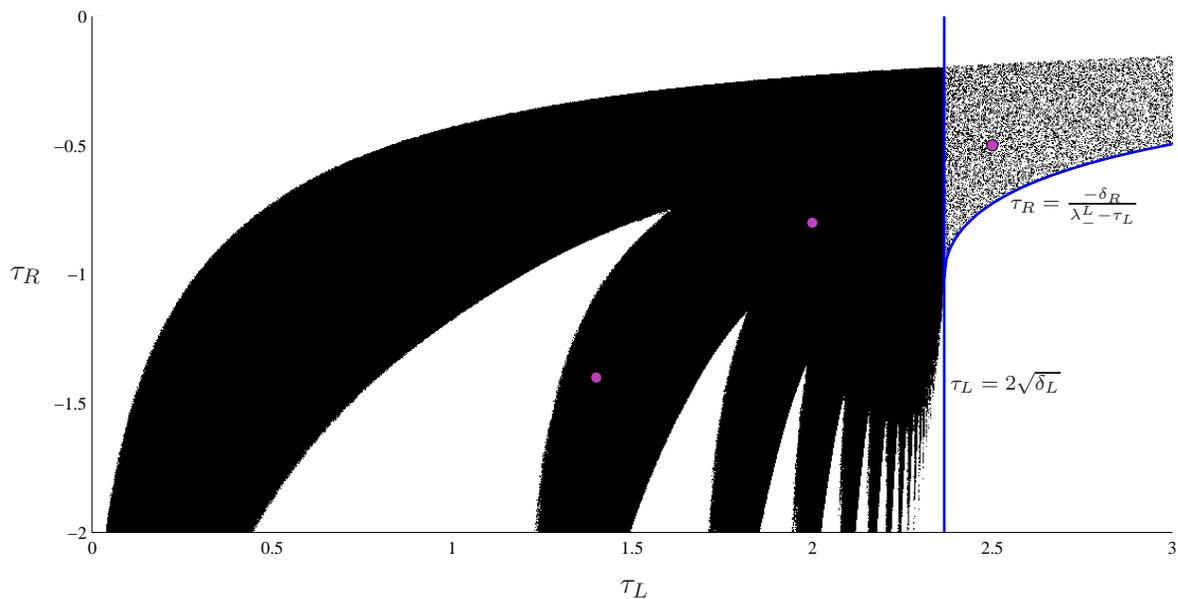}}
\put(8.11,0){\small $\tau_L$}
\put(0,4.17){\small $\tau_R$}
\put(12.5,2.7){\scriptsize $\tau_L = 2 \sqrt{\delta_L}$}
\put(13.3,5.1){\scriptsize $\tau_R = \frac{-\delta_R}{\lambda^L_- - \tau_L}$}
\end{picture}
\caption{
An illustration of the measure stability of $\b0$ for $g$, given by \eqref{eq:g2d}, with \eqref{eq:deltaLR2}.
This figure was obtained by numerically computing the forward orbit of a random point
for a $1024 \times 512$ grid of values of $\tau_L$ and $\tau_R$
and shading each grid point black for which the orbit appeared to converge to $\b0$.
The three coloured circles indicate the parameter values of
Figs.~\ref{fig:ppPolar1}, \ref{fig:ppPolar2}, and \ref{fig:ppPolar3}.
\label{fig:boundednessRegion}
} 
\end{center}
\end{figure}

The results here are motivated by a desire to understand attractors created in border-collision bifurcations
for which both pieces of the corresponding piecewise-linear approximation are area-expanding.
Thus we are particularly interested in the stability of $\b0$ for \eqref{eq:g2d} with $\delta_L > 1$ and $\delta_R < -1$.
For the purposes of illustration, throughout this section we use the fixed values
\begin{equation}
\delta_L = 1.4 \;, \qquad \delta_R = -1.2 \;.
\label{eq:deltaLR2}
\end{equation}
Other values of $\delta_L > 1$ and $\delta_R < -1$ have been found to give similar bifurcation structures.

Fig.~\ref{fig:boundednessRegion} shows values of $\tau_L$ and $\tau_R$ for which the forward orbit
of a random point appeared to converge to $\b0$ through numerical simulation.
This indicates that in the solid black region $\b0$ is measure-$1$ stable.
Numerical results suggest that $G$ has a unique physical measure $\mu$
for $\tau_L < 2 \sqrt{\delta_L}$\removableFootnote{
Otherwise some parts of the boundary of the measure-$1$ stability region would presumably be speckled.
},
and that $\lambda(\mu) = 0$ on the boundary of the region of measure-$1$ stability.

In the speckled region of Fig.~\ref{fig:boundednessRegion},
$\b0$ is measure-$\rho$ stable where $\rho \in (0,1)$ varies slightly throughout this region.
As indicated in Fig.~\ref{fig:ppPolar1},
here forward orbits either approach the invariant ray $\gamma^L_+$ and diverge,
or become trapped in the sector $\Lambda$ and converge to $\b0$.
The speckled region is bounded on the left by the line $\tau_L = 2 \sqrt{\delta_L}$
(to the left of this line $A_L$ has complex-valued eigenvalues and $\gamma^L_+$ does not exist)
and bounded below by the curve $\tau_R = \frac{-\delta_R}{\lambda^L_- - \tau_L}$
(below this curve 
almost all forward orbits approach $\gamma^L_+$ and diverge).

To compute the value of $\rho$ in the speckled region,
let $\psi$ be the unique angle in $\left( \frac{3 \pi}{2}, 2 \pi \right)$ for which $\theta^L_- = G(\psi)$.
For any initial point $(x,y) \in \mathbb{R}^2$ with angle $\theta \in [0,2 \pi)$,
if $\theta \in \left( \theta^L_-,\psi \right)$ then $g^i(x,y)$ approaches $\gamma^L_+$ and diverges,
while if $\theta \in \left[ 0, \theta^L_- \right) \cup \left( \psi, 2 \pi \right)$
then $g^i(x,y)$ becomes trapped in $\Lambda$ and converges to $\b0$.
Therefore the fraction of points whose forward orbit converges to $\b0$ is
\begin{equation}
\rho = 1 - \frac{\psi - \theta^L_-}{2 \pi} \;.
\label{eq:rho3}
\end{equation}
Using \eqref{eq:G2d} and \eqref{eq:thetaJpm}, after simplification
\begin{equation}
\tan \!\left( \psi - \theta^L_- \right) =
\frac{\delta_L - \delta_R + (\tau_L - \tau_R) \left( \lambda^L_- - \tau_L \right)}
{\delta_L \tau_R + (1 - \delta_R + \tau_L \tau_R) \left( \lambda^L_- - \tau_L \right)} \;.
\nonumber
\end{equation}
By applying this formula to the parameter values of Fig.~\ref{fig:ppPolar1},
we obtain $\rho = 0.37$ to two decimal places.

\begin{figure}[b!]
\begin{center}
\setlength{\unitlength}{1cm}
\begin{picture}(15.6,7.8)
\put(0,0){\includegraphics[width=15.6cm]{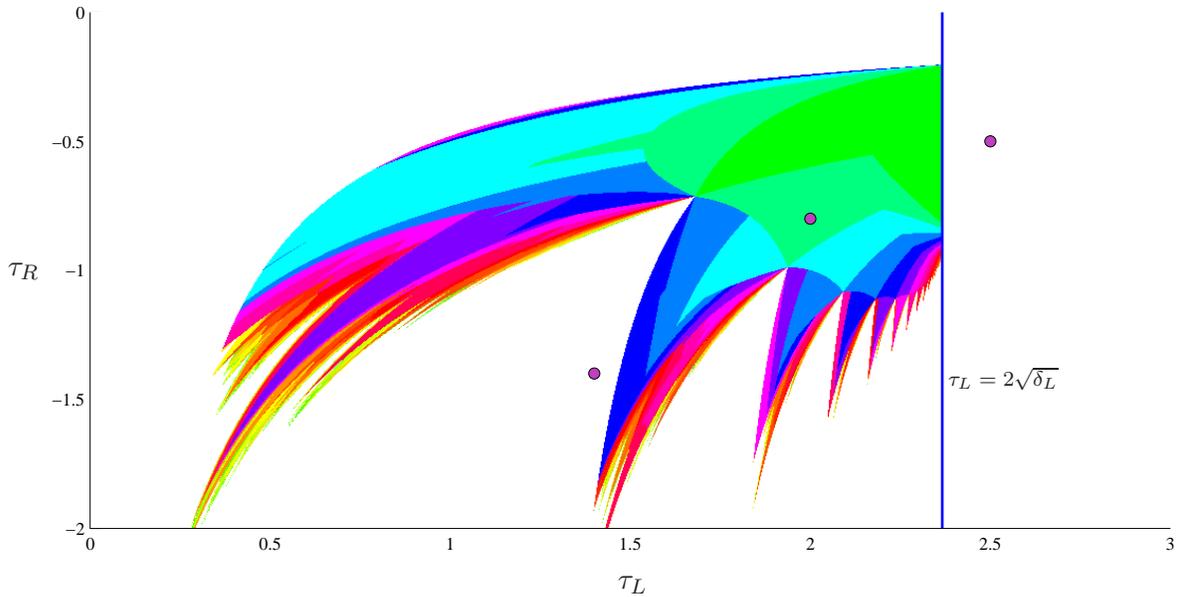}}
\put(8.11,0){\small $\tau_L$}
\put(0,4.17){\small $\tau_R$}
\put(12.5,2.7){\scriptsize $\tau_L = 2 \sqrt{\delta_L}$}
\end{picture}
\caption{
\label{fig:stabilityRegion}
The region of asymptotic stability of $\b0$ for $g$, given by \eqref{eq:g2d}, with \eqref{eq:deltaLR2}.
The region is coloured by the smallest value of $m$ for which
$g(\Omega_m) \subset \Omega_m$, up to $m = 30$, on a $1024 \times 512$ grid of values of $\tau_L$ and $\tau_R$
(green: $m = 1$).
The three coloured circles indicate the parameter values of
Figs.~\ref{fig:ppPolar1}, \ref{fig:ppPolar2}, and \ref{fig:ppPolar3}.
} 
\end{center}
\end{figure}

To identify regions of parameter space where $\b0$ is asymptotically stable, we use Theorem \ref{th:Ga92}.
Numerically we can accurately iterate the entire set $\Delta_0$ under $g$
because it is a triangle and $g$ maps polygons to polygons.
Thus each $\Delta_n$, and hence also each $\Omega_n$, can be encoded with a finite set of points.
Fig.~\ref{fig:stabilityRegion} shows the result
of a numerical search for the smallest value of $m$ for which $g(\Omega_m) \subset \Omega_m$.
Regions are coloured by the value of $m$.
The existence of $k \in \mathbb{Z}$ for which $g^k(\Omega_m) \cap \Gamma = \varnothing$ was not checked
as it is only expected to be false on the boundary of the stability region
where $\b0$ may be Lyapunov stable but not asymptotically stable.
Notice that the asymptotic stability region of Fig.~\ref{fig:stabilityRegion}
is contained within the measure-$1$ stability region shown in Fig.~\ref{fig:boundednessRegion}.
This is because asymptotic stability implies measure-$1$ stability.

By Theorem \ref{th:dilation}, if $\lambda(\mu) < 0$ for every ergodic
invariant probability measure $\mu$ of $G$ then $\b0$ is asymptotically stable,
while if $\lambda(\mu) > 0$ for some such $\mu$ then $\b0$ is not asymptotically stable.
We therefore expect each smooth part of the boundary of the asymptotic stability region
to be where $\lambda(\mu) = 0$ for a particular ergodic measure $\mu$ that
varies smoothly with respect to $\tau_L$ and $\tau_R$.
The complicated nature of the boundary of the asymptotic stability region 
may be explained by the observation that $G$ typically has many
(possibly uncountably many) ergodic invariant probability measures.

\begin{figure}[t!]
\begin{center}
\setlength{\unitlength}{1cm}
\begin{picture}(15.6,8)
\put(0,0){\includegraphics[width=8cm]{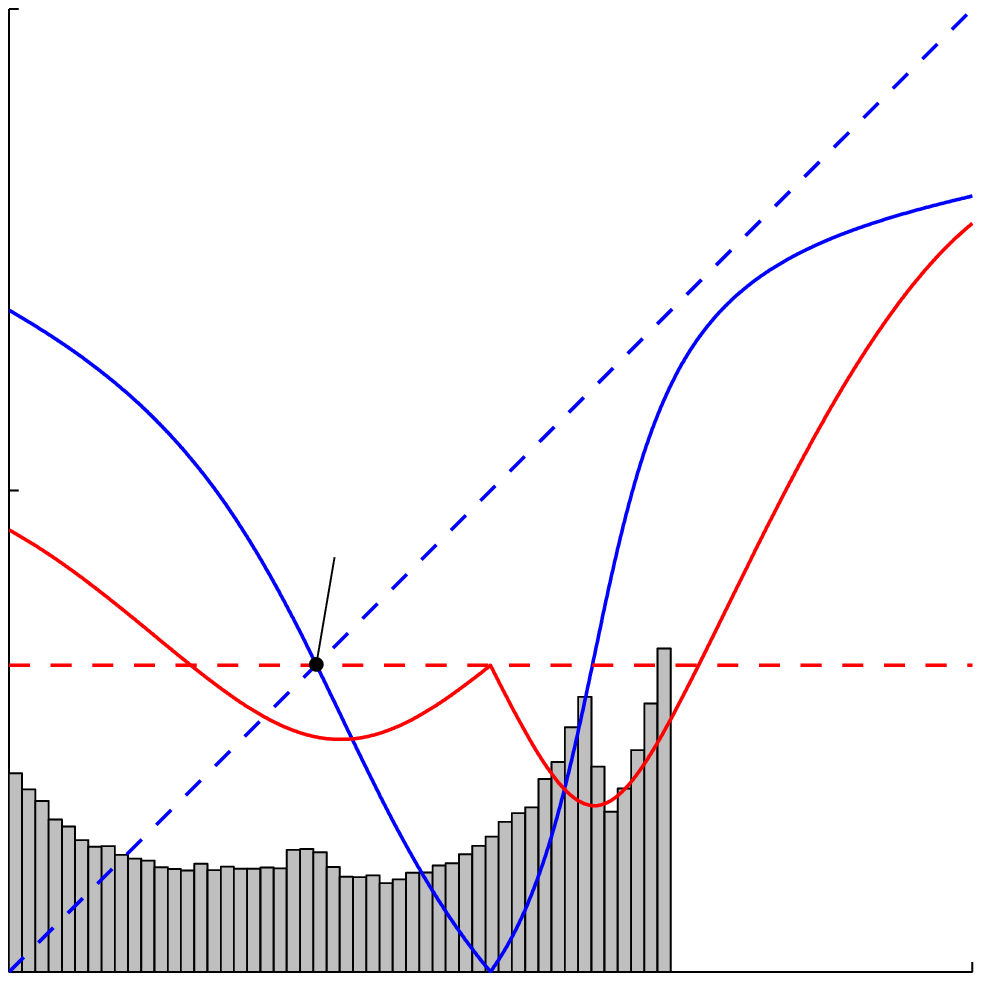}}
\put(8.5,.3){\includegraphics[width=7cm]{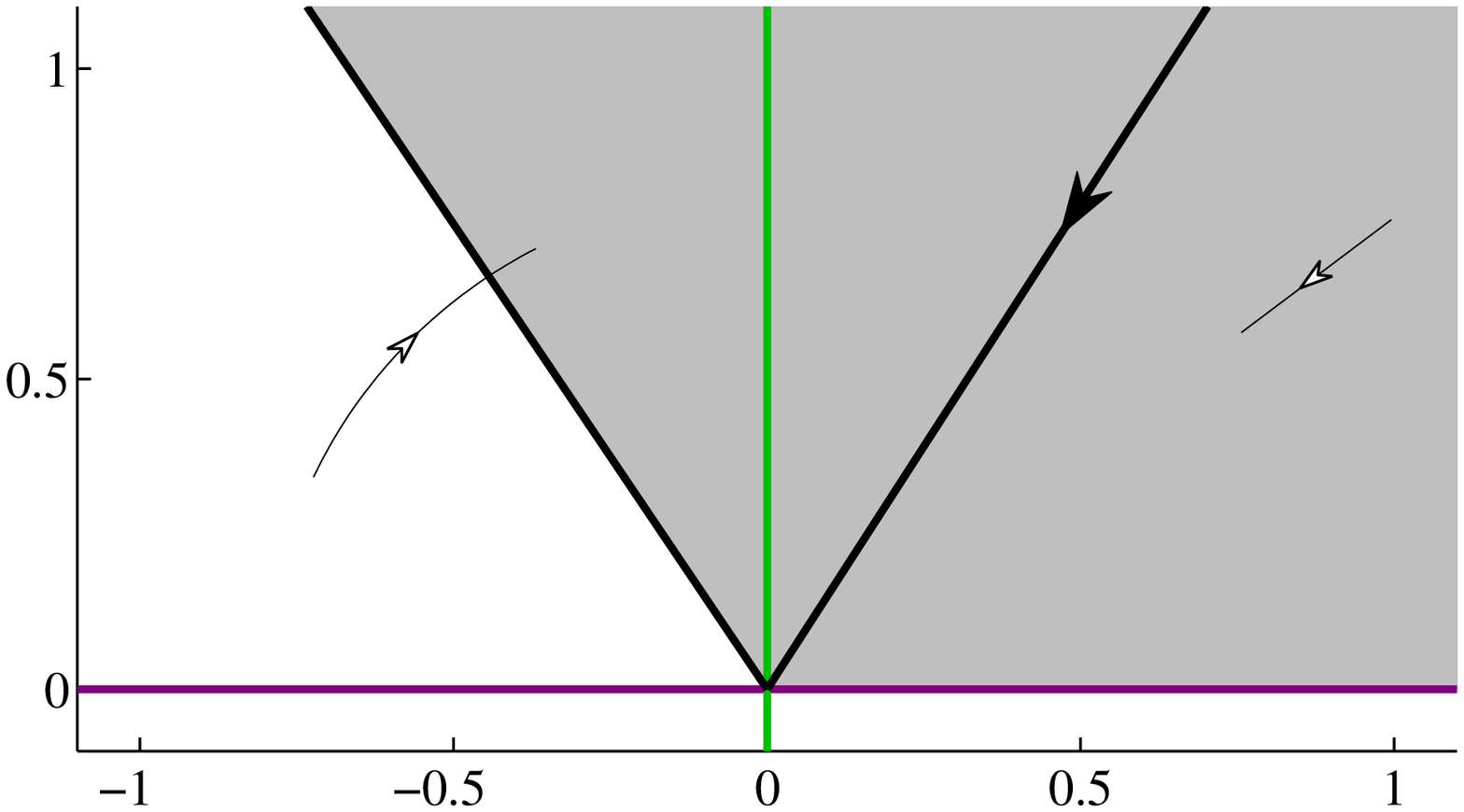}}
\put(1.14,7.85){\sf \bfseries A}
\put(9.41,4.4){\sf \bfseries B}
\put(5.4,.32){\small $\theta$}
\put(.73,.54){\scriptsize $0$}
\put(4.23,.54){\scriptsize $\frac{\pi}{2}$}
\put(7.73,.54){\scriptsize $\pi$}
\put(.58,.74){\scriptsize $0$}
\put(.52,4.26){\scriptsize $\frac{\pi}{2}$}
\put(.58,7.79){\scriptsize $\pi$}
\put(3.1,3.91){\scriptsize $\theta^R_+$}
\put(1.33,3.79){\scriptsize $D(\theta)$}
\put(1.33,5.38){\scriptsize $G(\theta)$}
\put(7,3.15){\tiny $D\!=\!1$}
\put(6.84,7.47){\tiny $G\!=\!\theta$}
\put(12.14,.26){\small $x$}
\put(8.6,3.25){\small $y$}
\put(14.83,1.18){\tiny $\theta\!=\!0$}
\put(12.32,4.06){\tiny $\theta\!=\!\frac{\pi}{2}$}
\put(10.35,4.06){\tiny $\theta\!=\!\theta_\Lambda$}
\put(9.23,1.18){\tiny $\theta\!=\!\pi$}
\put(14.5,3.07){\footnotesize $\Lambda$}
\put(14.2,3.88){\scriptsize $\gamma^R_+$}
\end{picture}
\caption{
The function $D$ \eqref{eq:D2d} and the map $G$ \eqref{eq:G2d} (panel A)
and an illustration of the dynamics of $g$ \eqref{eq:g2d} with
$(\tau_L,\delta_L,\tau_R,\delta_R) = (2,1.4,-0.8,-1.2)$
using the same conventions as Fig.~\ref{fig:ppPolar1}.
\label{fig:ppPolar2}
} 
\end{center}
\end{figure}

Figs.~\ref{fig:ppPolar1}, \ref{fig:ppPolar2}, and \ref{fig:ppPolar3}
illustrate the dynamics of $g$ for three representative combinations of parameter values.
As discussed above, for Fig.~\ref{fig:ppPolar1}, $\b0$ is measure-$\rho$ stable with $\rho \approx 0.37$
(and hence not asymptotically stable).
For Figs.~\ref{fig:ppPolar2} and \ref{fig:ppPolar3}, $\b0$ is measure-$1$ stable.
Here $\Lambda$ is globally attracting and $G$ appears to have a unique physical measure $\mu$
with $\lambda(\mu) = -0.16$ in Fig.~\ref{fig:ppPolar2},
and $\lambda(\mu) = -0.06$ in Fig.~\ref{fig:ppPolar3}, to two decimal places.
However, $\b0$ is asymptotically stable for Fig.~\ref{fig:ppPolar2}, but not for Fig.~\ref{fig:ppPolar3}.
This is because for Fig.~\ref{fig:ppPolar3}, $G$ has a period-three solution
$\{ \theta_0, \theta_1, \theta_2 \}$ with $\lambda(\mu) = 0.03$ (to two decimal places)
for the corresponding invariant probability measure
$\mu = \frac{1}{3} \hspace{-.4mm}\left( \delta_{\theta_0} + \delta_{\theta_1} + \delta_{\theta_2} \right)$.

\begin{figure}[t!]
\begin{center}
\setlength{\unitlength}{1cm}
\begin{picture}(15.6,8)
\put(0,0){\includegraphics[width=8cm]{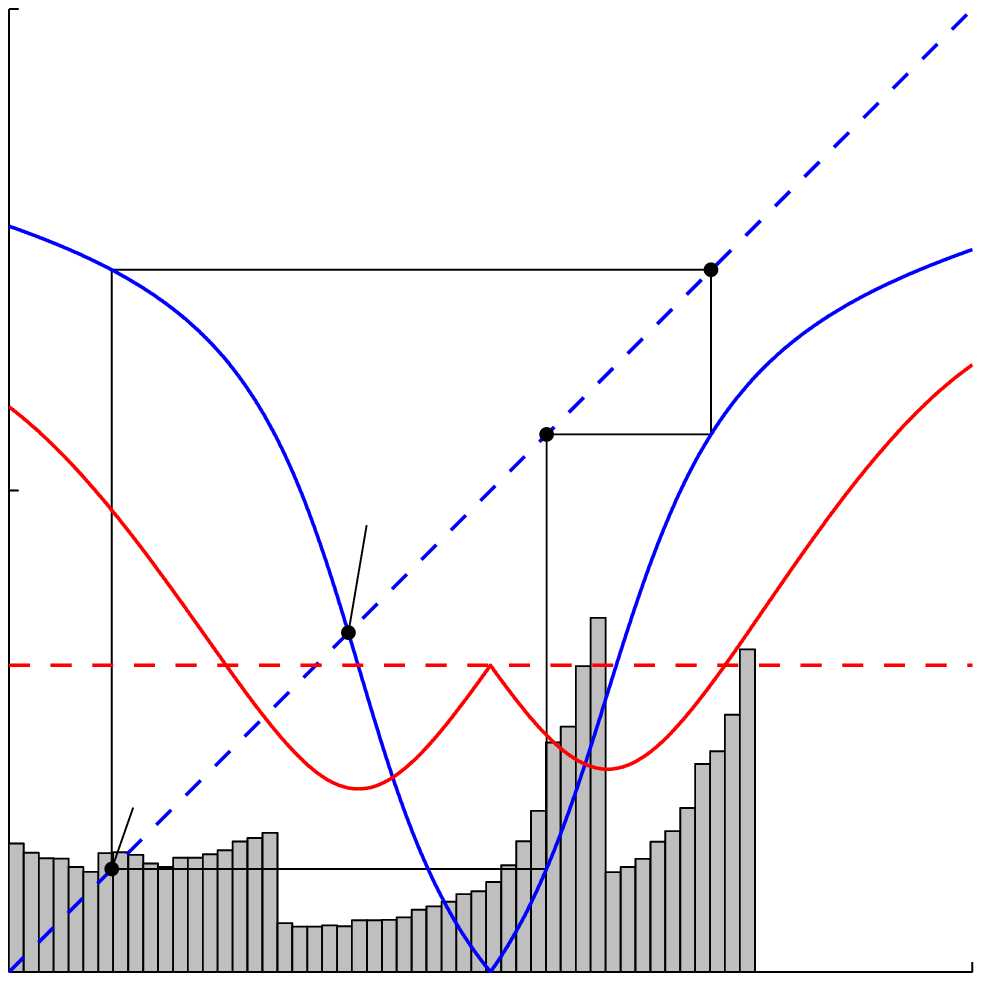}}
\put(8.5,.3){\includegraphics[width=7cm]{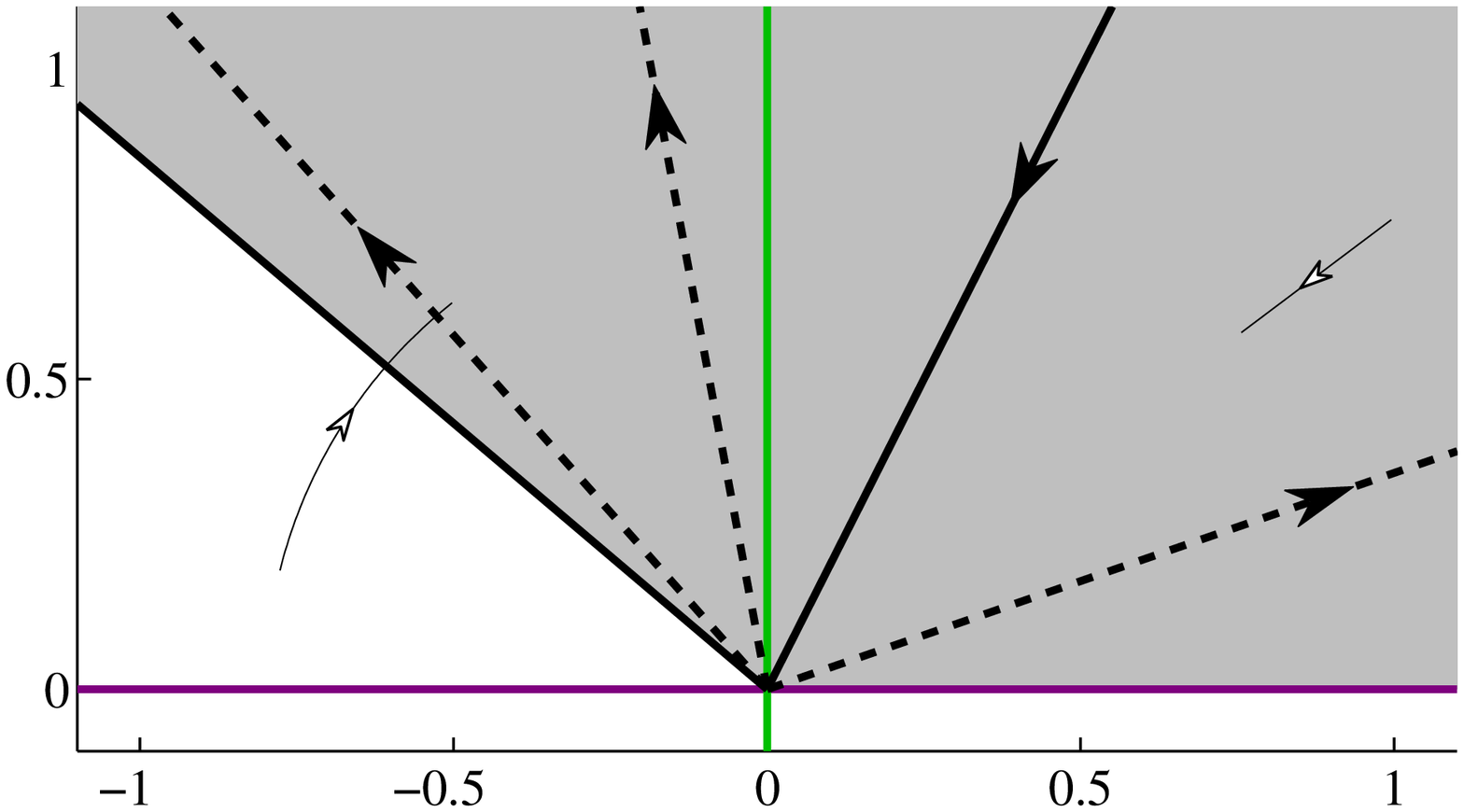}}
\put(1.14,7.85){\sf \bfseries A}
\put(9.41,4.4){\sf \bfseries B}
\put(5.4,.32){\small $\theta$}
\put(.73,.54){\scriptsize $0$}
\put(4.23,.54){\scriptsize $\frac{\pi}{2}$}
\put(7.73,.54){\scriptsize $\pi$}
\put(.58,.74){\scriptsize $0$}
\put(.52,4.26){\scriptsize $\frac{\pi}{2}$}
\put(.58,7.79){\scriptsize $\pi$}
\put(3.33,4.14){\scriptsize $\theta^R_+$}
\put(1.65,2.08){\scriptsize $\theta_0$}
\put(5.67,6.08){\scriptsize $\theta_1$}
\put(4.47,4.87){\scriptsize $\theta_2$}
\put(1.95,3.8){\scriptsize $D(\theta)$}
\put(2.45,5.3){\scriptsize $G(\theta)$}
\put(7,3.15){\tiny $D\!=\!1$}
\put(6.84,7.47){\tiny $G\!=\!\theta$}
\put(12.14,.26){\small $x$}
\put(8.6,3.25){\small $y$}
\put(14.83,1.18){\tiny $\theta\!=\!0$}
\put(12.32,4.06){\tiny $\theta\!=\!\frac{\pi}{2}$}
\put(9.23,3.01){\tiny $\theta\!=\!\theta_\Lambda$}
\put(9.23,1.18){\tiny $\theta\!=\!\pi$}
\put(14.5,3.07){\footnotesize $\Lambda$}
\put(13.79,3.88){\scriptsize $\gamma^R_+$}
\put(15.06,2.3){\scriptsize $\gamma_0$}
\put(9.8,3.99){\scriptsize $\gamma_1$}
\put(11.37,3.99){\scriptsize $\gamma_2$}
\end{picture}
\caption{
The function $D$ \eqref{eq:D2d} and the map $G$ \eqref{eq:G2d} (panel A)
and an illustration of the dynamics of $g$ \eqref{eq:g2d} with
$(\tau_L,\delta_L,\tau_R,\delta_R) = (1.4,1.4,-1.4,-1.2)$
using the same conventions as Fig.~\ref{fig:ppPolar1}.
\label{fig:ppPolar3}
} 
\end{center}
\end{figure}

\section{Summary and outlook for future studies}
\label{sec:conc}

This paper contains several new results regarding the stability of a fixed point
on a switching manifold of a piecewise-smooth continuous map $f$.
For simplicity we always take the fixed point to be the origin $\b0$.
The dynamics near $\b0$ are well approximated by a piecewise-linear map $g$, given by \eqref{eq:g},
involving the Jacobians of each smooth piece of the map evaluated at $\b0$.
If $\b0$ is either asymptotically stable or not Lyapunov stable for $g$,
then, by Theorem \ref{th:hots} and Conjecture \ref{cj:hots2} (if true),
$\b0$ is similarly either asymptotically stable or not Lyapunov stable for $f$.

Any continuous, linearly homogeneous map (such as the piecewise-linear approximation \eqref{eq:g})
can be written as $g(\alpha \bz) = \alpha D(\bz) G(\bz)$, where $\alpha \ge 0$, $\bz \in \mathbb{S}^{d-1}$
and $D$ and $G$ are given by \eqref{eq:DG}.
If $g(\bx) = \b0$ only for $\bx = \b0$, then $G$ is a continuous map on $\mathbb{S}^{d-1}$.
If $g(\bx) = \b0$ for some $\bx \ne \b0$ (not explored in this paper),
then $G$ may have jump discontinuities.

If $\bz$ belongs to the basin of an invariant probability measure $\mu$ of $G$,
then $g^n(\bz) \to \b0$ as $n \to \infty$ if $\lambda(\mu) < 0$,
and $\left\| g^n(\bz) \right\| \to \infty$ as $n \to \infty$ if $\lambda(\mu) > 0$,
where $\lambda(\mu)$ is the average value of $\ln(D)$ over $\mu$, \eqref{eq:lambda}.
By Theorem \ref{th:physical}, if $\lambda(\mu) < 0$ for every physical measure $\mu$ of $G$,
then $\b0$ is a measure-$1$ stable fixed point of $g$.
By Theorem \ref{th:dilation}, if $\lambda(\mu) < 0$ for every ergodic invariant probability measure $\mu$ of $G$,
then $\b0$ is an asymptotically stable fixed point of $g$.

Numerically, the measure stability of $\b0$ can be ascertained by simply
computing the forward orbits of random points near $\b0$, see Fig.~\ref{fig:boundednessRegion}.
A determination of asymptotic stability is considerably more difficult.
It is not uncommon for $G$ to have uncountably many ergodic invariant probability measures,
in which case there seems to be no hope of evaluating $\lambda(\mu)$ for each such measure $\mu$.
For this reason we determined asymptotic stability (shown in Fig.~\ref{fig:stabilityRegion})
by using Theorem \ref{th:Ga92} that applies only to the two-dimensional border-collision normal form.

The stable and unstable manifolds of $\b0$ for a continuous, linearly homogeneous map $g$ may
involve a complicated collection of sectors of $\mathbb{R}^d$ corresponding to invariant probability measures $\mu$ of $G$
for which $\lambda(\mu) < 0$ and $\lambda(\mu) > 0$, respectively.
There remains a critical need to develop a theory for the persistence of these manifolds under $\co(\bx)$ perturbations
as this should enable us to resolve Conjecture \ref{cj:hots2}.

For the two-dimensional border-collision normal form, our numerical results reveal that $\b0$ can be asymptotically stable
even if both smooth components of the map are area-expanding.
Throughout the stability region shown in Fig.~\ref{fig:stabilityRegion},
$A_R$ has an eigenvalue with modulus less than $1$.
We conjecture that, for the general piecewise-linear map \eqref{eq:g},
$\b0$ cannot be asymptotically stable if all eigenvalues of $A_L$ and $A_R$ have modulus greater than $1$.
If true, this result may be rather difficult to prove because, as we have seen,
the stability of $\b0$ is more closely related to invariant probability measures of $G$
than the eigenvalues of $A_L$ and $A_R$.

It remains to show how the results of this paper can be used to establish
conditions for the existence of attractors created in border-collision bifurcations.
The key idea is that the stability of the fixed point at the bifurcation
should imply the existence of a structurally stable attractor.
In cases for which both components of the corresponding piecewise-linear approximation are area-expanding,
it may be possible to prove that such attractors are robustly chaotic.
For the border-collision normal form, robust chaos is described in \cite{BaYo98}
and the existence of multi-dimensional chaotic attractors
has recently been demonstrated by using results for general piecewise-expanding maps \cite{Gl15b,Gl16e}.
It also remains to systematically study the stability of fixed points on
switching manifolds for piecewise-smooth continuous maps for which one-sided derivatives are not locally bounded,
such as maps with a square-root singularity that arise as return maps for regular grazing bifurcations \cite{DiBu01}.

\appendix
\section{Additional proofs}
\label{app:proofs}

\begin{proof}[Proof of Lemma \ref{le:clh}]
We prove the two parts of the lemma in order.
\begin{enumerate}
\item
By \eqref{eq:linearlyHomogeneous}, $g^n(\bx) \to \b0$ as $n \to \infty$ for all $\bx \in \mathbb{R}^d$.
Thus it remains to show that $\b0$ is a Lyapunov stable fixed point of $g$.

Suppose for a contradiction that $\b0$ is not Lyapunov stable.
By \eqref{eq:linearlyHomogeneous} this means that
for all $k \ge 1$ there exists $\bx_k \in \overline{B}_1$ and $n_k \in \mathbb{Z}$
with $\left\| g^{n_k}(\bx_k) \right\| \ge k$.
By \eqref{eq:linearlyHomogeneous} we can require $\| \bx_k \| = 1$ for each $k$.
We can also assume
\begin{equation}
\left\| g^n(\bx_k) \right\| \ge 1 \;, \quad
{\rm for~all~} n = 0,1,\ldots,n_k \;,
\label{eq:gixmLarge}
\end{equation}
because if \eqref{eq:gixmLarge} does not hold then we can replace $\bx_k$
with $\frac{g^n(\bx_k)}{\left\| g^n(\bx_k) \right\|}$
for the largest $n < n_k$ for which $\left\| g^n(\bx_k) \right\| < 1$
(and replace $n_k$ with $n_k - n$).
Note that we necessarily have $n_k \to \infty$ as $k \to \infty$.

Since $\mathbb{S}^{d-1}$ is compact, $\{ \bx_k \}$ has a convergent subsequence.
That is $\bx_{k_j} \to \by$ as $j \to \infty$ for some $k_j \in \mathbb{Z}$ and $\by \in \mathbb{S}^{d-1}$.
Next we show that $g^n(\by) \not\to \b0$ as $n \to \infty$.

Choose any $N \ge 1$.
Since $g^N$ is continuous there exists $\delta > 0$ such that
$\left\| g^N(\bx) - g^N(\by) \right\| < \frac{1}{2}$ whenever $\| \bx - \by \| < \delta$.
Since $\bx_{k_j} \to \by$ and $n_{k_j} \to \infty$ as $j \to \infty$,
there exists $j \in \mathbb{Z}$ such that
$\left\| \bx_{k_j} - \by \right\| < \delta$ and $n_{k_j} \ge N$.
Then $\left\| g^N \!\left( \bx_{k_j} \right) - g^N(\by) \right\| < \frac{1}{2}$
and $\left\| g^N \!\left( \bx_{k_j} \right) \right\| \ge 1$ by \eqref{eq:gixmLarge}.
Thus
\begin{equation}
\left\| g^N(\by) \right\| \ge \left\| g^N \!\left( \bx_{k_j} \right) \right\| -
\left\| g^N \!\left( \bx_{k_j} \right) - g^N(\by) \right\| > 1 - \frac{1}{2} = \frac{1}{2} \;.
\nonumber
\end{equation}
Hence $g^n(\by) \not\to \b0$ as $n \to \infty$, which is a contradiction.
\item
Choose any $\ee > 0$.
We have just shown that $\b0$ is a Lyapunov stable fixed point of $g$,
hence there exists $\delta_1 > 0$ such that
\begin{equation}
g^n(\bx) \in B_{\frac{\ee}{2}} \;, \quad {\rm for~all~} \bx \in B_{\delta_1} {\rm ~and~all~} n \ge 0 \;.
\label{eq:gLyapunovStableTilde}
\end{equation}
Choose any $\by \in \overline{B}_r$.
Since $g^n(\by) \to \b0$ as $n \to \infty$ there exists $N_1 \in \mathbb{Z}$ such that
\begin{equation}
g^n(\by) \in B_{\frac{\delta_1}{2}} \;, \quad {\rm for~all~} n \ge N_1 \;.
\label{eq:gAsyStableTilde}
\end{equation}
Since $g$ is continuous, there exists $\delta > 0$ such that
\begin{align}
\left\| g^n(\bx) - g^n(\by) \right\| < \ee \;, \quad
& {\rm for~all~} \bx \in \overline{B}_r {\rm ~with~} \| \bx - \by \| < \delta \nonumber \\
& {\rm and~all~} n = 0,1,\ldots,N_1 \;,
\label{eq:gContinuousTilde1}
\end{align}
and
\begin{equation}
\left\| g^{N_1}(\bx) - g^{N_1}(\by) \right\| < \frac{\delta_1}{2} \;, \quad
{\rm for~all~} \bx \in \overline{B}_r {\rm ~with~} \| \bx - \by \| < \delta \;.
\label{eq:gContinuousTilde2}
\end{equation}
By \eqref{eq:gAsyStableTilde} and \eqref{eq:gContinuousTilde2},
for any $\bx \in \overline{B}_r$ with $\| \bx - \by \| < \delta$, we have
\begin{equation}
\left\| g^{N_1}(\bx) \right\| \le
\left\| g^{N_1}(\bx) - g^{N_1}(\by) \right\| + \left\| g^{N_1}(\by) \right\| <
\frac{\delta_1}{2} + \frac{\delta_1}{2} = \delta_1 \;.
\nonumber
\end{equation}
That is, $g^{N_1}(\bx) \in B_{\delta_1}$,
and so by \eqref{eq:gLyapunovStableTilde}
we have $g^n(\bx) \in B_{\frac{\ee}{2}}$ for all $n \ge N_1$.
This shows that the sequence of functions $\{ g^n \}$ is uniformly bounded on $\overline{B}_r$.
Also $g^n(\by) \in B_{\frac{\ee}{2}}$ for all $n \ge N_1$
by \eqref{eq:gLyapunovStableTilde} and \eqref{eq:gAsyStableTilde}.
Therefore for all $\bx \in \overline{B}_r$ with $\| \bx - \by \| < \delta$ and all $n \ge N_1$ we have
\begin{equation}
\left\| g^n(\bx) - g^n(\by) \right\| \le
\left\| g^n(\bx) \right\| + \left\| g^n(\by) \right\| \le
\frac{\ee}{2} + \frac{\ee}{2} = \ee \;.
\label{eq:equicontinuousLargei}
\end{equation}
By \eqref{eq:gContinuousTilde1} and \eqref{eq:equicontinuousLargei}
we can conclude that $\{ g^n \}$ is equicontinuous on $\overline{B}_r$.

Since $\overline{B}_r$ is compact and $\{ g^n \}$ is uniformly bounded and equicontinuous,
by the Arzel\`{a}-Ascoli theorem there exists a subsequence of $\{ g^n \}$
that converges uniformly \cite{RoFi10,So03}\removableFootnote{
This part of the Arzel\`{a}-Ascoli theorem is quite easy to prove:
use a diagonalisation argument and a ``$\frac{\ee}{3}$'' argument.
In the special case that the $g^n$ converge, the diagonisation argument is not needed.
In the extra special case that the $g^n$ converge to a constant, as we have here,
an ``$\frac{\ee}{2}$'' argument suffices, as given in the proof of Lemma C.2 of \cite{DiNo08}.

The most common version of the Arzel\`{a}-Ascoli theorem is quite different to what we are using:
$\{ g^n \}$ is compact if and only if it is closed, bounded, and equicontinuous.
}.
Since $\{ g^n \}$ converges to $\b0$ we can conclude that $\{ g^n \}$ converges uniformly to $\b0$.
\end{enumerate}
\end{proof}

\begin{proof}[Proof of Lemma \ref{le:figiBound}]
Let
\begin{equation}
K = \max \left[ 1, \max_{\bz \in \mathbb{S}^{d-1}} \| g(\bz) \| \right],
\nonumber
\end{equation}
which is well-defined because $g$ is continuous and $\mathbb{S}^{d-1}$ is compact.
Since $g$ is linearly homogeneous, for any $\bx \in \mathbb{R}^d$ we have
$\| g(\bx) \| \le K \| \bx \|$, and therefore
\begin{equation}
\left\| g^i(\bx) \right\| \le K^i \| \bx \|, \quad
{\rm for~all~} \bx \in \mathbb{R}^d {\rm ~and~all~} i \ge 0 \;.
\label{eq:giBound}
\end{equation}
The result \eqref{eq:figiBound} is trivial for $n = 1$.
Choose any $n \ge 2$ and any $\ee > 0$.
Let $\eta_1 = \frac{\ee}{2 K^n}$.
By the Heine-Cantor theorem,
since $g$ is continuous on the compact set $\overline{B}_1$ it is uniformly continuous on $\overline{B}_1$.
Therefore we can iteratively find $\eta_2,\eta_3,\ldots,\eta_n > 0$ such that
\begin{align}
\| g(\bx) - g(\by) \| < \frac{\eta_j}{2} \;, \quad
& {\rm for~all~} \bx, \by \in \overline{B}_1 {\rm ~with~} \| \bx - \by \| < \eta_{j+1} \;, \nonumber \\
& {\rm for~all~} j = 1,2,\ldots,n-1 \;,
\label{eq:gUniformContinuity}
\end{align}
and we assume $\eta_1 \ge \eta_2 \ge \cdots \ge \eta_n$.
Since $g$ is linearly homogeneous, \eqref{eq:gUniformContinuity} can be generalised to
\begin{align}
\| g(\bx) - g(\by) \| < \frac{\alpha \eta_j}{2} \;, \quad
& {\rm for~all~} \bx, \by \in \overline{B}_\alpha {\rm ~with~} \| \bx - \by \| < \alpha \eta_{j+1} \;, \nonumber \\
& {\rm for~all~} j = 1,2,\ldots,n-1 \;, {\rm ~and~all~} \alpha > 0 \;.
\label{eq:gUniformContinuity2}
\end{align}
Let $h(\bx) = f(\bx) - g(\bx)$.
Since $h$ is $\co(\bx)$, there exists $\delta_1 > 0$ such that
\begin{equation}
\| h(\bx) \| < \frac{\eta_n}{2} \| \bx \| \;, \quad {\rm for~all~} \bx \in B_{\delta_1} \;.
\label{eq:hLittleO}
\end{equation}

Let $\delta = \frac{\delta_1}{2 K^n}$.
Next we use induction on $i$ to show that
\begin{equation}
\left\| f^i(\bx) - g^i(\bx) \right\| \le 2 K^n \eta_{n-i+1} \| \bx \| \;, \quad
{\rm for~all~} \bx \in B_\delta {\rm ~and~all~} i = 1,2,\ldots,n \;.
\label{eq:figiDiff}
\end{equation}
This will complete the proof because \eqref{eq:figiBound}
follows immediately from \eqref{eq:figiDiff} with $i = n$.

Choose any $\bx \in B_\delta$.
Equation \eqref{eq:figiDiff} is true for $i = 1$ because by \eqref{eq:hLittleO}
\begin{equation}
\| f(\bx) - g(\bx) \| = \| h(\bx) \| \le \frac{\eta_n}{2} \| \bx \| \le 2 K^n \eta_n \| \bx \|.
\nonumber
\end{equation}
Suppose \eqref{eq:figiDiff} is true for some $i = k < n$ (this is the inductive hypothesis).
To verify \eqref{eq:figiDiff} for $i = k+1$ we first use \eqref{eq:giBound} and the inductive hypothesis to obtain
\begin{equation}
\left\| f^k(\bx) \right\| \le
\left\| f^k(\bx) - g^k(\bx) \right\| + \left\| g^k(\bx) \right\| \le
2 K^n \eta_{n-k+1} \| \bx \| + K^k \| \bx \| \le 2 K^n \| \bx \|.
\label{eq:fkBound}
\end{equation}
Since $2 K^n \| \bx \| < \delta_1$, by \eqref{eq:hLittleO} and \eqref{eq:fkBound} we have
\begin{equation}
\left\| h \!\left( f^k(\bx) \right) \right\| \le
\frac{\eta_n}{2} \left\| f^k(\bx) \right\| \le
K^n \eta_n \| \bx \|.
\label{eq:hfkBound}
\end{equation}
Next we use \eqref{eq:gUniformContinuity2} with $j = n-k$,
$f^k(\bx)$ and $g^k(\bx)$ in place of $\bx$ and $\by$,
and $\alpha = 2 K^n \| \bx \|$ (which is justified by \eqref{eq:giBound} and \eqref{eq:fkBound})
and the inductive hypothesis to obtain
\begin{equation}
\left\| g \!\left( f^k(\bx) \right) - g \!\left( g^k(\bx) \right) \right\| \le
\big( 2 K^n \| \bx \| \big) \frac{\eta_{n-k}}{2} = K^n \eta_{n-k} \| \bx \|.
\label{eq:gfkggkDiff}
\end{equation}
We then write $f^{k+1}(\bx) = g \!\left( f^k(\bx) \right) + h \!\left( f^k(\bx) \right)$
and add \eqref{eq:hfkBound} and \eqref{eq:gfkggkDiff} to obtain
\begin{equation}
\left\| f^{k+1}(\bx) - g^{k+1}(\bx) \right\| \le
K^n (\eta_{n-k} + \eta_n) \| \bx \| \le
2 K^n \eta_{n-k} \| \bx \|, \nonumber
\end{equation}
which verifies \eqref{eq:figiDiff} for $i = k+1$.
\end{proof}

\begin{proof}[Proof of Lemma \ref{le:GL}]\removableFootnote{
Here is my original proof of Lemma \ref{le:GL} which uses $G$ directly.

We first note that $G \!\left( \frac{\pi}{2} \right) < \frac{\pi}{2}$, $G(\pi) < \pi$, and
$G$ is increasing on $\left[ \frac{\pi}{2}, \pi \right]$ because
$\frac{dG}{d\theta} = \frac{\delta_L \sec^2(\theta)}
{\left( \tau_L + \tan(\theta) \right)^2 + \delta_L^2}$
is strictly positive.
\begin{enumerate}
\item 
If $\tau_L < 2 \sqrt{\delta_L}$, then $A_L$ has no positive eigenvalues.
Thus $G$ has no fixed points on $\left[ \frac{\pi}{2}, \pi \right]$ by Lemma \ref{le:fps}.
Therefore $G(\theta) < \theta$ on $\left[ \frac{\pi}{2}, \pi \right]$, and so
for all $\theta \in \left( \frac{\pi}{2}, \pi \right)$
there exists $i > 0$ such that $G^i(\theta) \in \left[ 0, \frac{\pi}{2} \right]$.
\item
If $\tau_L > 2 \sqrt{\delta_L}$, then $A_L$ has two distinct positive eigenvalues.
Thus $G$ has exactly two fixed points in $\left( \frac{\pi}{2}, \pi \right)$, namely $\theta^L_+$ and $\theta^L_-$.
We have $\theta^L_- < \tilde{\theta} < \theta^L_+$, where $\tilde{\theta} = \tan^{-1} \left( \frac{-\tau_L}{2} \right)$.
Notice $G(\tilde{\theta}) > \tilde{\theta}$ because
$G(\tilde{\theta}) = \tan^{-1} \!\left( \frac{-2 \delta_L}{\tau_L} \right)$ and $\tau_L > 2 \sqrt{\delta_L}$.
Therefore $G(\theta) > \theta$ on $(\theta^L_-, \theta^L_+)$;
also $G(\theta) < \theta$ on $\left[ \frac{\pi}{2}, \theta^L_- \right) \cup (\theta^L_+, \pi]$.
Recalling also that $G$ is increasing on $\left[ \frac{\pi}{2}, \pi \right]$,
the required conclusions regarding $G^i(\theta)$ follow.
\end{enumerate}
}
If $\tau_L < 2 \sqrt{\delta_L}$ then the eigenvalues of $A_L$ are complex-valued
and the result is an immediate consequence of the observation that iterates of $A_L \bx$ rotate
clockwise about $\b0$.

If $\tau_L > 2 \sqrt{\delta_L}$ then $A_L$ has two positive eigenvalues $\lambda^L_-$ and $\lambda^L_+$.
Since $\delta_L > 0$ the angles $\theta^L_-$ and $\theta^L_+$ belong to $\left( \frac{\pi}{2}, \pi \right)$.
By Lemma \ref{le:fps}, $\theta^L_-$ and $\theta^L_+$ are fixed points of $G$.
Since $\lambda^L_- < \lambda^L_+$, for the map $A_L \bx$
the eigenspace for $\lambda^L_-$ is either repelling or less strongly attracting than the
eigenspace for $\lambda^L_+$.
Thus $\theta^L_-$ and $\theta^L_+$ are repelling and attracting fixed points of $G$ respectively
and the result follows.
\end{proof}

\begin{proof}[Proof of Lemma \ref{le:GR}]\removableFootnote{
Here is my original proof of Lemma \ref{le:GR} which uses $G$ directly.

With $\delta_R < 0$, the matrix $A_R$ has a single positive eigenvalue $\lambda^R_+$.
Thus by Lemma \ref{le:fps}, $G$ has a unique fixed point on $\left[ 0, \frac{\pi}{2} \right]$ given by $\theta^R_+$.

Under the change of variables $s = \tan(\theta)$,
the part of \eqref{eq:G2d} corresponding to $\theta \in \left[ 0, \frac{\pi}{2} \right)$ is transformed to
\begin{equation}
\tilde{G}_R(s) = \frac{-\delta_R}{\tau_R + s} \;, \quad {\rm for~} s \ne -\tau_R \;,
\label{eq:tildeGR}
\end{equation}
for $s \in [0,\infty)$.
We use $\tilde{G}_R$ to complete the proof as it is algebraically simpler than $G$.

The unique fixed point of $\tilde{G}_R$ on $[0,\infty)$ is
$s^R_+ = \tan \!\left( \theta^R_+ \right) = \lambda^R_+ - \tau_R$.
The stability multiplier of $s^R_+$ is readily found to be
\begin{equation}
\frac{d \tilde{G}_R}{d s} \left( s^R_+ \right) =
\frac{\tau_R}{\lambda^R_+} - 1 \;.
\label{eq:dtildeGRds}
\end{equation}

\begin{enumerate}
\item
If $\tau_R > 0$, then $\tilde{G}_R : [0,\infty) \to [0,\infty)$
and so $[0,\infty)$ is forward invariant for $\tilde{G}_R$.
Since $s = \tan(\theta)$ is a homeomorphism for $\theta \in \left[ 0, \frac{\pi}{2} \right)$,
we can conclude that $\left[ 0, \frac{\pi}{2} \right]$ is forward invariant for $G$.

By \eqref{eq:dtildeGRds} we have $-1 < \frac{d \tilde{G}_R}{d s} \left( s^R_+ \right) < 0$,
and so $s^R_+$ is an attracting fixed point.
To show that it attracts all points in $[0,\infty)$ we look at the second iterate $\tilde{G}_R^2$.
The only fixed point of $\tilde{G}_R^2$ is $s^R_+$
because $(\lambda^{R}_+)^2$ is the only positive eigenvalue of $A_R^2$.
Observe $0 < \frac{d}{d s} \tilde{G}_R^2 \left( s^R_+ \right) < 1$
and that $\tilde{G}_R^2$ is increasing on $[0,\infty)$ because
$\frac{d}{d s} \tilde{G}_R^2(s) =
\frac{\delta_R^2}{\left( \tau_R^2 - \delta_R + \tau_R s \right)^2}$
is strictly positive.
Therefore $\tilde{G}_R^i(s) \to s^R_+$ as $i \to \infty$ for all $s \in [0,\infty)$,
and the analogous statement is true for $G$.
\item
If $\tau_R < 0$, then $\frac{d \tilde{G}_R}{d s} \left( s^R_+ \right) < -1$ by \eqref{eq:dtildeGRds}.
Thus $s^R_+$ is a repelling fixed point of $\tilde{G}_R$ and so $\theta^R_+$ is a repelling fixed point of $G$.
\end{enumerate}
}
With $\delta_R < 0$ the eigenvalues of $A_R$ satisfy $\lambda^R_- < 0 < \lambda^R_+$.
Also $\theta^R_- \in \left( \frac{\pi}{2}, \pi \right)$ and $\theta^R_+ \in \left( 0, \frac{\pi}{2} \right)$.
By Lemma \ref{le:fps}, $G$ has the unique fixed point $\theta^R_+$.

If $\tau_R < 0$ then $\lambda^R_+ < \left| \lambda^R_- \right|$ and 
for the map $A_R \bx$ the eigenspace for $\lambda^R_+$ is either repelling or less strongly attracting
than the eigenspace for $\lambda^R_-$.
It follows that $\theta^R_+$ is repelling for $G$.
If $\tau_R > 0$ then $\lambda^R_+ > \left| \lambda^R_- \right|$ and it similarly follows
that $\theta^R_+$ is globally attracting for $G$.
\end{proof}

\begin{proof}[Proof of Lemma \ref{le:Lambda}]
To show that $\Lambda$ is forward invariant it suffices to show that 
$G(\theta) \in [0,\theta_\Lambda]$ for all $\theta \in [0,\theta_\Lambda]$.
We have
\begin{equation}
\frac{dG}{d\theta} = \begin{cases}
\frac{\delta_R \sec^2(\theta)}{\left( \tau_R + \tan(\theta) \right)^2 + \delta_R^2} \;, &
\theta \in \left[ 0, \frac{\pi}{2} \right), \\
\frac{\delta_L \sec^2(\theta)}{\left( \tau_L + \tan(\theta) \right)^2 + \delta_L^2} \;, &
\theta \in \left( \frac{\pi}{2}, \pi \right).
\end{cases}
\label{eq:dGdtheta}
\end{equation}
Thus $G$ is decreasing on $\left[ 0, \frac{\pi}{2} \right)$ and increasing on $\left( \frac{\pi}{2}, \pi \right)$
because $\delta_R < 0$ and $\delta_L > 0$.
Thus the minimum value of $G$ over $[0,\theta_\Lambda]$ is $G \!\left( \frac{\pi}{2} \right) = 0$
which belongs to $[0,\theta_\Lambda]$.
Also the maximum value of $G$ over $[0,\theta_\Lambda]$ is either $G(0) = \theta_\Lambda$,
which belongs to $[0,\theta_\Lambda]$, or $G(\theta_\Lambda)$.
Thus if we can show that $G(\theta_\Lambda) \in [0,\theta_\Lambda]$
then we have verified that $G(\theta) \in [0,\theta_\Lambda]$ for all $\theta \in [0,\theta_\Lambda]$.
But the only scenario in which $G(\theta_\Lambda) \notin [0,\theta_\Lambda]$
is if $\theta_\Lambda \in \left( \frac{\pi}{2}, \pi \right)$, requiring $\tau < 0$,
and $G(\theta_\Lambda) > \theta_\Lambda$, requiring $\tau_L > 2 \sqrt{\delta_L}$
(so that $G$ has fixed points $\theta^L_-, \theta^L_+ \in \left( \frac{\pi}{2}, \pi \right)$)
and $\theta^L_- < \theta_\Lambda < \theta^L_+$.
This verifies the forward invariance claim of $\Lambda$.

For any $(x,y) \in \mathbb{R}^2$, $g(x,y)$ is in the upper half-plane $y \ge 0$.
In the left half-plane $g$ is the linear map $A_L \bx$
and if $\tau_L < 2 \sqrt{\delta_L}$ this corresponds to clockwise rotation about $\b0$.
Thus there exists $M \in \mathbb{Z}$ such that
if $g(x,y)$ is in the left half-plane then $g^i(x,y)$ is in the right half-plane for some $i \le M-1$.
Then $g^{i+1}(x,y) \in \Lambda$ and, by the forward invariance of $\Lambda$,
$g^M(x,y) \in \Lambda$ as required.
\end{proof}

\begin{proof}[Proof of Theorem \ref{th:Ga92}]
First suppose that $\b0$ is an asymptotically stable fixed point of $g$.
Since $\Delta_1$ is a triangle with points on opposite boundaries of $\Lambda$,
there exists $\ee > 0$ such that $B_\ee \cap \Lambda \subset \Delta_1$,
and we assume $\ee \le \frac{1}{\sqrt{2}}$ such that $B_\ee \cap \Gamma = \varnothing$.
Since $\b0$ is Lyapunov stable and $\Lambda$ is uniformly globally attracting (by Lemma \ref{le:Lambda})
there exists $\delta > 0$ such that for all $(x,y) \in B_\delta$
we have $g^n(x,y) \in B_\ee \cap \Lambda$ for all $n \ge 0$.
By Lemma \ref{le:clh}(ii) and linear homogeneity, $g^n(\Delta_0) \to \b0$ uniformly.
Thus there exists $m \in \mathbb{Z}$ such that
$\Delta_{m+1} \subset B_\delta \cap \Lambda$.
Thus $\Delta_{m+1} \subset B_\ee \cap \Lambda \subset \Delta_1$
and so $\Delta_{m+1} \subset \Omega_m$.
Trivially $\Delta_1 \cup \Delta_2 \cup \cdots \cup \Delta_m \subset \Omega_m$,
thus $g(\Omega_m) = \Delta_1 \cup \Delta_2 \cup \cdots \cup \Delta_{m+1} \subset \Omega_m$ as required.

Next we show that $g^k(\Omega_m) \cap \Gamma = \varnothing$ for $k = m+1$.
Since $\Delta_{m+1} \subset B_\delta$, we have $\Delta_{m+1}, \Delta_{m+2}, \ldots \subset B_\ee$.
Thus $g^{m+1}(\Omega_m) = \Delta_{m+1} \cup \Delta_{m+2} \cup \cdots \cup \Delta_{2 m + 1} \subset B_\ee$
and so $g^{m+1}(\Omega_m) \cap \Gamma = \varnothing$.

Conversely suppose there exist $m,k \in \mathbb{Z}$ such that
$g(\Omega_m) \subset \Omega_m$ and $g^k(\Omega_m) \cap \Gamma = \varnothing$.
Let $Q_1 = \{ (x,y) ~|~ x \ge 0, y \ge 0 \}$.
There exists $0 \le \alpha < 1$ such that
$g^k(\Omega_m) \cap Q_1 \subset \alpha \Delta_0 \subset \alpha \Omega_m$
(where by multiplying a set by $\alpha$ we mean that every point in the set is scaled by $\alpha$).
As a simple extension of Lemma \ref{le:Lambda}, there exists $M \in \mathbb{Z}$ such that
for every $(x,y) \in \mathbb{R}^2$ we have $g^i(x,y) \in \Lambda \cap Q_1$ for some $i \le M$.
Then for every $(x,y) \in g^k(\Omega_m)$ we have $g^i(x,y) \in Q_1$ for some $i \le M$.
Thus $g^i(x,y) \in g^k(\Omega_m) \cap Q_1$, since $\Omega_m$ is forward invariant under $g$.
Thus $g^i(x,y) \in \alpha \Omega_m$ and hence $g^M(x,y) \in \alpha \Omega_m$
because $g$ is linearly homogeneous, hence $\alpha \Omega_m$ is forward invariant under $g$.
We have therefore shown that $g^{k+M}(\Omega_m) \subset \alpha \Omega_m$.
Therefore for all $j \ge 0$ we have $g^{j(k+M)}(\Omega_m) \subset \alpha^j \Omega_m \to (0,0)$ as $j \to \infty$.
There exists $r > 0$ such that $g^M(B_r) \subset \Delta_0 \subset \Omega_m$,
thus for all $(x,y) \in B_r$ we have $g^n(x,y) \to (0,0)$ as $n \to \infty$.
Thus $\b0$ is an asymptotically stable fixed point of $g$ by Lemma \ref{le:clh}(i).
\end{proof}

\end{document}